\documentclass[12pt]{amsart}

\usepackage{ucs}

\usepackage{amssymb}
\usepackage{amsthm}
\usepackage{amsmath}
\usepackage{latexsym}
\usepackage[cp1251]{inputenc}
\usepackage{graphicx}
\usepackage{wrapfig}
\usepackage{caption}
\usepackage{subcaption}
\usepackage{indentfirst}
\usepackage[left=2.4cm,right=2.4cm,top=2.4cm,bottom=2.4cm,bindingoffset=0cm]{geometry}
\usepackage{enumerate}
\usepackage{makecell}
\usepackage{xr}
\usepackage{cleveref}
\DeclareMathOperator{\aut}{Aut}

\DeclareMathOperator{\cyc}{Cyc}

\DeclareMathOperator{\id}{id}

\DeclareMathOperator{\orb}{Orb}

\DeclareMathOperator{\sym}{Sym}
\DeclareMathOperator{\rad}{rad}

\def\tm#1{\item[{\rm (#1)}]}

\makeatletter 
\def\@seccntformat#1{\csname the#1\endcsname. } 
\def\@biblabel#1{#1.} 

\newcommand{\overbar}[1]{\mkern 1.5mu\overline{\mkern-1.5mu#1\mkern-1.5mu}\mkern 1.5mu}

\title{Classification of abelian Schur groups II}

\author{Grigory Ryabov}

\address{School of Mathematical Sciences, Hebei Key Laboratory of Computational Mathematics and Applications, Hebei Normal University, Shijiazhuang 050024, P. R. China}

\address{Sobolev Institute of Mathematics, Novosibirsk, Russia}

\address{Novosibirsk State Technical University, Novosibirsk, Russia}

\email{gric2ryabov@gmail.com}

\thanks{The author was supported by the state contract of the Sobolev Institute of Mathematics (project number FWNF-2026-0011)}

\date{}

\newtheorem{prop}{Proposition}[section]

\newtheorem*{theo2}{Theorem (Classification of abelian Schur groups)}

\newtheorem{lemm}[prop]{Lemma}
\newtheorem{theo}[prop]{Theorem}

\newtheorem*{prob}{Problem (P\"{o}schel, 1974)}

\newtheorem*{corl1}{Corollary}
\theoremstyle{definition}

\newtheorem{rem}[prop]{Remark}

\newcommand{\aref}[1]{A\ref{#1}}

\begin{document}

\begin{abstract}
A finite group $G$ is called a \emph{Schur} group if every Schur ring over $G$ is \emph{schurian}, i.e. associated in a natural way with a subgroup of the symmetric group $\sym(G)$ that contains all right translations of~$G$. The list of all possible abelian Schur groups was obtained by Evdokimov, Kov\'acs, and Ponomarenko in 2016.  In two papers, we complete a classification of abelian Schur groups. In the present paper, we prove that several groups of nonpowerful order from the list are Schur groups. By that, we obtain a classification of abelian Schur groups.
\\
\\
\textbf{Keywords}: $S$-rings, Schur groups, permutation groups.

\noindent\textbf{MSC}: 05E30, 20B25.
\end{abstract}

\maketitle
\section{Introduction}

In the present paper, we continue to study the schurity problem for $S$-rings over abelian groups. Recall that an \emph{$S$-ring} (\emph{Schur ring}) over a finite group $G$ is a subring of the group ring $\mathbb{Z}G$ which is a free $\mathbb{Z}$-module spanned by a partition of $G$ closed under taking inverse and containing the identity element $e$ of $G$ as a class. The theory of $S$-rings goes back to the classical paper of Schur~\cite{Schur}. Later, the $S$-ring theory was substantially developed by Wielandt (see Chapter~IV of the monograph~\cite{Wi}). $S$-rings have many applications in algebraic combinatorics, in particular, in studying Cayley graphs, permutation group theory, representation theory etc. The interested reader can find more information and details on $S$-rings in the monograph~\cite{CP} and survey~\cite{MP0}

One of the sources of $S$-rings over~$G$ is permutation groups having a regular subgroup isomorphic to~$G$. Namely, the partition of~$G$ into the orbits of a one-point-stabilizer in such group defines an $S$-ring. As Wielandt wrote~\cite[p.~54]{Wi2}, Schur had conjectured that every $S$-ring can be constructed in such way. This was disproved by Wielandt~\cite{Wi}. An $S$-ring is said to be \emph{schurian} if it arises from the action of an appropriate permutation group having a regular subgroup and a finite group is defined to be a \emph{Schur} group if every $S$-ring over this group is schurian. The last two notions were introduced in~\cite{Po}. In the same paper, the following problem was posed.

\begin{prob}
Determine all Schur groups.
\end{prob}

The above problem seems to be hard, in particular, because the number of $S$-rings over a given group can be exponential in the order of the group. On the other hand, it is unclear in the general case how to find an appropriate permutation group for a given $S$-ring or to prove that there is no such group.

More information on schurian $S$-rings, Schur groups, and the P\"{o}schel problem is provided in the first paper on the classification of abelian Schur groups~\cite{Ry5} (see also~\cite{EKP1,EKP2}). Several results on schurity of nonabelian groups can be found in~\cite{MP,PV,Ry1,Ry3,Ry4}. It should be noted that almost all of them state that some nonabelian groups are non-Schur groups. On the other hand, there are several results~\cite{EKP1,EKP2,GNP,KP,Po,MP,PR,Ry2,Ry5} on schurity of abelian groups. Summarizing all of them and the computational results from~\cite{Ziv}, one can deduce that schurity of only the following abelian groups is unknown:
$$C_4\times C_{2p},~E_8\times C_p,~C_{6}\times C_{3^k},~E_9 \times C_{2q},$$
where $C_n$ and $E_n$ denote the cyclic and elementary abelian groups of order~$n$, respectively, $p\geq 11$ is an odd prime, $q\geq 5$ is a prime, and $k\geq 3$. In the present paper, we study schurity of the above groups. We divide them into two parts, namely, the groups of order $8p$, where $p$ is an odd prime, and the groups of twice odd order. The main results of the paper are two theorems below.

\begin{theo}\label{main1}
The groups $C_4\times C_{2p}$ and $E_8\times C_p$, where $p$ is an odd prime, are Schur groups. 
\end{theo}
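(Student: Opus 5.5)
We argue by induction on the group order. Let $G$ be $C_4\times C_{2p}$ or $E_8\times C_p$, and let $\mathcal{A}$ be an $S$-ring over $G$. Write $G=H\times P$ with $P\cong C_p$ and $H$ the Sylow $2$-subgroup, so $H\cong C_4\times C_2$ or $H\cong E_8$. Every proper section of $G$ is either a $2$-group of order at most $8$, or of the form $C_n\times C_p$ with $n\le 4$, or $C_2\times C_2\times C_p$. All of these are known Schur groups: the cyclic case, $E_4\times C_p$, $C_2\times C_{2p}$, and the small $2$-groups are covered by the results cited in the introduction (\cite{EKP1,EKP2,Po,PR,Ry2,Ry5}). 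Hence every $S$-ring over a proper section of $G$ is schurian, and it remains to handle $\mathcal{A}$ itself.

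\textbf{Step 1 (structure).} We apply the standard dichotomy for $S$-rings over abelian groups whose order has a prime $p$ dividing it exactly once. Consider the $\mathcal{A}$-subgroups $P_0=\langle X\rangle$, where $X$ ranges over basic sets meeting $P$ nontrivially. We aim to show one of the following.
\begin{enumerate}
\item[(a)] $\mathcal{A}$ is a tensor product $\mathcal{A}_H\otimes\mathcal{A}_P$ with $H,P$ both $\mathcal{A}$-subgroups. Then $\mathcal{A}$ is schurian, since tensor products of schurian $S$-rings are schurian.
\item[(b)] $\mathcal{A}$ has a rank~$2$ component of order $p$ or larger. In this case one uses the result that an $S$-ring is schurian when it is a nontrivial generalized wreath product whose factors are schurian under the appropriate compatibility condition.
\item[(c)] $\mathcal{A}$ is a nontrivial $U/L$-wreath product (generalized wreath product) with $L\le U$ proper $\mathcal{A}$-subgroups.
\end{enumerate}
To obtain this trichotomy we use the Schur--Wielandt principle applied to the $p'$-power maps $x\mapsto x^{m}$ with $m\equiv 1 \pmod p$, $m\equiv 0\pmod 8$. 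These maps send each basic set into an $\mathcal{A}$-set contained in $P$. This shows that the projection to $P$ of every basic set is either trivial or a union of cosets of a subgroup of $\operatorname{Aut}(P)$-orbits. The case where $P$ is an $\mathcal{A}$-subgroup then gives either (a) or (c), via the well-known lemma on $S$-rings whose subgroup lattice splits as a direct product of coprime parts.

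\textbf{Step 2 (generalized wreath products).} Assume $\mathcal{A}$ is a nontrivial $U/L$-wreath product. The operands $\mathcal{A}_U$ and $\mathcal{A}_{G/L}$ are schurian by induction. By the Evdokimov--Ponomarenko criterion, $\mathcal{A}$ is schurian if and only if there exist automorphisms of $\mathcal{A}_U$ and of $\mathcal{A}_{G/L}$ that agree on $U/L$ in the sense of the "$U/L$-compatibility" of automorphism groups. For $U/L$ of order $1,2,3$ or $p$ this compatibility is automatic, because $\aut(\mathcal{A}_{U/L})$ is then the holomorph-type group induced from both sides. The remaining cases have $|U/L|\in\{4,8,2p,4p\}$. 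Here we use the fact that each section involved is a known Schur group with small $2$-part, so $\mathcal{A}_{U/L}$ is cyclotomic or a tensor/wreath product of cyclotomic rings. We would then lift the relevant cyclotomic group to $U$ and to $G/L$ directly, choosing compatible multiplier groups.

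\textbf{Main obstacle.} The hardest part will be the non-decomposable case in which $P$ is \emph{not} an $\mathcal{A}$-subgroup, so that basic sets mix $2$-parts with nontrivial $p$-parts. There are also configurations in Step 2 with $U/L\cong C_4\times C_2$ or $E_4$ where compatibility of automorphism groups is not automatic. For the former we would first try to prove, using the map $x\mapsto x^{p}$ and the radical of basic sets, that such an $\mathcal{A}$ is cyclotomic up to the $2$-part, namely $\mathcal{A}=\operatorname{Cyc}(K,G)$ for some $K\le\aut(G)$, and hence schurian. For the latter, we would enumerate the $S$-rings over $E_4$ and $C_4\times C_2$ (there are few, all with explicitly known automorphism groups) and verify the compatibility condition case by case.
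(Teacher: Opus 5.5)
\textbf{There is a genuine gap, and it sits in Step~1.} You assert that once $P$ (and $H$) are $\mathcal{A}$-subgroups, $\mathcal{A}$ is a tensor product or a nontrivial generalized wreath product. This is false. Take $G=E_8\times C_p$ with $7\mid p-1$, a Singer cycle $\sigma\in\aut(H)$ of order~$7$, and $\tau\in\aut(P)$ of order~$7$. Let $\mathcal{A}=\cyc(K,G)$ with $K=\langle(\sigma,\tau)\rangle$. Its basic sets are $\{e\}$, $H^\#$, the $\langle\tau\rangle$-orbits on $P^\#$, and $p-1$ sets $\{h^{\sigma^i}g^{\tau^i}:i\in\mathbb{Z}_7\}$ of size~$7$ outside $H\cup P$. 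The only $\mathcal{A}$-subgroups are $1$, $H$, $P$, $G$. So $\mathcal{A}\neq\mathcal{A}_H\otimes\mathcal{A}_P$, and no basic set outside $H$ (resp.\ $P$) is a union of $H$- (resp.\ $P$-)cosets, since $7$ is divisible by neither $8$ nor $p$.

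Such rings are schurian only because they are cyclotomic. Proving that every dense $\mathcal{A}$ without a well-behaved decomposition has the form $\cyc(K(K_H,K_H^0,K_P,K_P^0,\psi),G)$ is the heart of the paper. It uses:
\begin{itemize}
\item the description $X=\bigcup_{\Delta\in\Pi_H}\Delta\times\Delta^{\varphi}$ of basic sets and the gluing criterion (Lemmas~\ref{basesethp}--\ref{subdirect});
\item the list of $S$-rings over groups of order~$8$ (Lemma~\ref{c4c2}) and Lemmas~\ref{nonreg8p}--\ref{wreath8p};
\item the case analysis on $\lambda_X$, together with the duality trick, in Proposition~\ref{hpasubgroups8p}.
\end{itemize}
Your outline has no step that could produce such a $K$. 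Your power-map principle is also misstated. The set $\{x^m:x\in X\}$ need not be an $\mathcal{A}$-set: for $\mathcal{A}=\mathcal{T}_G$ and $X=G^\#$ it equals $P$. The Schur--Wielandt principle only yields the set of elements whose multiplicity is nonzero modulo a prime.

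The remaining steps are not yet arguments either. The nondense case, which you call the hardest, contains no indecomposable rings at all. Propositions~\ref{hnot8p} and~\ref{pnot8p} show that every such $\mathcal{A}$ is either a tensor product or an $S$-wreath product with one of the following: $|S|\le 2$; $|S|=4$ and $\mathcal{A}_S\ne\mathcal{T}_S$; or a $\otimes$-complementation. They do this via the structural lemma of the first paper, Lemma~\ref{8rank2} for the subcase $|S|=4$ with $\mathcal{A}_S=\mathcal{T}_S$, and duality.

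The real task is to obtain a section for which schurity of the wreath product can actually be proved, and your Step~2 does not supply one.
\begin{itemize}
\item Compatibility is not automatic for $|S|=p$. If $\mathcal{A}_S=\mathcal{T}_S$, then $\aut(\mathcal{A}_S)=\sym(S)$, which is not of holomorph type and is not $2$-minimal for $p\ge 5$.
\item For $\mathcal{A}_S=\mathcal{T}_{E_4}$, compatibility is not determined by $\mathcal{A}_S$ alone. So enumerating the $S$-rings over $E_4$ and $C_4\times C_2$ cannot settle it.
\end{itemize}
The paper avoids these sections structurally, via Theorem~\ref{8p}(3) combined with the $\otimes$-complement criterion. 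For $|S|=2p$ it needs that $\mathcal{A}_U$ and $\mathcal{A}_{G/L}$ are cyclotomic and that $\mathcal{A}_S$ is Cayley minimal (Lemma~\ref{cayleymin}).
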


\begin{theo}\label{main2}
The groups $C_{6}\times C_{3^k}$ and $E_9 \times C_{2q}$, where $k\geq 1$ and $q$ is a prime, are Schur groups.
\end{theo}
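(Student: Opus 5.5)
We argue by induction on the order of the group, using the fact that subgroups and quotients of Schur groups are Schur. Let $\mathcal{A}$ be an $S$-ring over $G$, where $G$ is $C_6\times C_{3^k}$ or $E_9\times C_{2q}$. By induction, all proper sections of $G$ are Schur groups; for small cases this rests on the previously known results and the computations of~\cite{Ziv}. In particular, every $\mathcal{A}$-section $U/L$ with $U/L\neq G$ carries a schurian $S$-ring. We also use the standard reductions from~\cite{EKP1,EKP2,Ry5}.

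\textbf{Step 1: reduce to the case where $\mathcal{A}$ has no nontrivial decomposition.} If $\mathcal{A}$ is a tensor product of $S$-rings over proper $\mathcal{A}$-subgroups, then it is schurian, since a tensor product of schurian $S$-rings is schurian. If $\mathcal{A}$ is the $S$-ring of rank~$2$ over some $\mathcal{A}$-section, it is clearly schurian. Otherwise, the Leung--Man type structure theory for the Sylow $2$-part, which has order~$2$, lets us split along the involution. Let $H$ be the subgroup of order $2$, or of order $2q$ in the second family, together with its $\mathcal{A}$-closure. Then $\mathcal{A}$ either decomposes as a tensor product, or is a generalized wreath product $\mathcal{A}=\mathcal{A}_U\wr_{U/L}\mathcal{A}_{G/L}$ for $\mathcal{A}$-subgroups $L\le U$ with $L\neq 1$ and $U\neq G$. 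In the latter case, by induction both $\mathcal{A}_U$ and $\mathcal{A}_{G/L}$ are schurian.

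\textbf{Step 2: the generalized wreath product case.} We apply the schurity criterion for generalized wreath products over abelian groups from~\cite{EP}: $\mathcal{A}$ is schurian if and only if there are groups $\Delta_0\ge \mathcal{A}_U$-automorphisms and $\Delta_1$ on $G/L$ whose restrictions to $U/L$ agree. The sufficient condition we aim for is that $\aut(\mathcal{A}_{U/L})$ acts $2$-isomorphically, meaning the $S$-ring over $U/L$ is ``$2$-minimal'' or cyclotomic with a compatible Cayley group. The section $U/L$ is a proper section of a group with $3$-part $C_3\times C_{3^k}$ or $E_9$. Hence its $3$-part has rank at most~$2$, and its $2$-part and $q$-part are cyclic of prime order. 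So $\mathcal{A}_{U/L}$ is, by induction and by the known classification for $C_{3}\times C_{3^m}$ and cyclic groups, either cyclotomic, a tensor product, or a wreath product of such. In each of these cases I would verify the extension condition directly, by showing that every automorphism of $\mathcal{A}_{U/L}$ extends to both sides. This is where the specific structure matters: for $E_9\times C_{2q}$ the prime $q$ appears only once, and for $C_6\times C_{3^k}$ the $3$-part has small rank, so the automorphism groups of the section $S$-rings are large enough.

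\textbf{Step 3: the indecomposable case.} If $\mathcal{A}$ is neither a tensor product nor a proper generalized wreath product, I expect to show that it is cyclotomic, i.e. $\mathcal{A}=\cyc(K,G)$ for some $K\le\aut(G)$, and hence schurian. The argument would use the Schur--Wielandt theory of rational modules: for $p'$-multipliers $m$, the map $x\mapsto x^m$ preserves $\mathcal{A}$. For the $3$-part, the known description of $S$-rings over $C_3\times C_{3^k}$ combined with the primitivity-type restriction forces the basic sets to be orbits of a multiplier group.

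The main obstacle is Step 2 for $C_6\times C_{3^k}$ with $k$ large. There, $U/L$ can be a section of the form $C_3\times C_{3^m}$ or $C_6\times C_{3^m}$ with a non-cyclotomic, for example wreath-type, $S$-ring. Checking that the automorphism groups of the two sides glue requires a careful case analysis of the possible $\mathcal{A}$-subgroups $L<U$, which I would organize by the $3$-ranks of $L$ and $G/U$.
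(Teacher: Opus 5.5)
Your proposal has a genuine gap at Step~2, and Step~2 carries the whole theorem. Saying you would ``verify the extension condition directly, by showing that every automorphism of $\mathcal{A}_{U/L}$ extends to both sides'' only restates the gluing criterion for a generalized wreath product. Nothing in your outline gives $\aut(\mathcal{A}_U)^{S}=\aut(\mathcal{A}_S)=\aut(\mathcal{A}_{G/L})^{S}$, and for an arbitrarily chosen section this can fail. Your heuristic also points the wrong way. What forces the two restrictions $\Delta_0^S$ and $\Delta_1^S$ to agree is $2$-minimality of $\mathcal{A}_S$, or extra structure such as $\otimes$-complementation or a crested product, not a large $\aut(\mathcal{A}_S)$. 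For example, $\mathcal{T}_{C_q}$ with $q\ge 5$ has automorphism group $\sym(q)$, yet $AGL(1,q)$ is a proper $2$-equivalent subgroup containing the regular group, so $\mathcal{T}_{C_q}$ is not $2$-minimal. Induction only tells you that $\mathcal{A}_{U/L}$ is schurian; it does not give the structural trichotomy you invoke, and that trichotomy would not tell you which automorphisms lift anyway. Likewise, the dichotomy in Step~1 is asserted rather than derived, and knowing only that \emph{some} wreath decomposition exists is too weak for Step~2.

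The missing idea is to pin down exactly which sections occur and then, when necessary, replace the section by a better one.

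\textbf{The paper's decomposition.} Take $H$ the Hall $2'$-subgroup, $H_1$ a maximal $\mathcal{A}$-subgroup of $H$, and $P_1$ the least $\mathcal{A}$-subgroup containing $P\cong C_2$. Theorem~\ref{2odd} shows that one of the following holds:
\begin{enumerate}
\item[(i)] $\mathcal{A}=\mathcal{A}_{H_1}\star\mathcal{A}_{P_1}$, which is schurian by Lemma~\ref{schurstar};
\item[(ii)] $\mathcal{A}=\mathcal{A}_{H_1}\wr\mathcal{T}_{G/H_1}$;
\item[(iii)] $\mathcal{A}_{P_1}$ is $\otimes$-complemented in $\mathcal{A}_{H_1P_1}$;
\item[(iv)] $\mathcal{A}$ is simultaneously the $(H_1P_1)/P_1$- and the $H_1/(H_1\cap P_1)$-wreath product.
\end{enumerate}
In particular there is no ``indecomposable'' case, so your Step~3 is vacuous.

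\textbf{The group $E_9\times C_{2q}$.} In case (iv), $|H_1/(H_1\cap P_1)|\in\{1,3,q\}$. When the order is $q$, the quotient $G/(H_1\cap P_1)\cong C_{6q}$ is cyclic.

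\textbf{The group $C_6\times C_{3^k}$.} Here you need the fine structure of $S$-rings over $C_3\times C_{3^l}$ from~\cite{Ry2}.
\begin{enumerate}
\item[(a)] Write $\mathcal{A}_{H_1}$ as a $U/L$-wreath product with $|\rad(\mathcal{A}_U)|=1$.
\item[(b)] Then $\mathcal{A}_U$ is either a tensor product, which gives $\otimes$-complementation, or cyclotomic. In the cyclotomic case its quotients by order-$3$ subgroups are $2$-minimal unless $\rad(\mathcal{A}_{U/A_1})=\overbar{B}$ (Lemma~\ref{quotc3c3k}).
\item[(c)] A noncharacteristic order-$3$ subgroup in the radical of a basic set propagates to every basic set outside its span (Lemma~\ref{radicalsc3c3k}). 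This yields a wreath decomposition whose quotient is a cyclic $3$-group.
\item[(d)] For that decomposition, Leung--Man together with normality gives a $2$-minimal section (Lemma~\ref{cycl3section}).
\end{enumerate}
None of these ingredients appears in your proposal.

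Finally, the statement allows $q=2$. The group $E_9\times C_4$ has Sylow $2$-subgroup $C_4$, so splitting along an involution does not apply there; the paper settles this group by the computations of~\cite{Ziv}.
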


To prove Theorems~\ref{main1} and~\ref{main2}, we characterize all $S$-rings over the groups from these theorems. Namely, we prove the theorem below which immediately follows from Theorems~\ref{8p} and~\ref{2odd}.

\begin{theo}\label{main15}
Every nontrivial $S$-ring over one of the groups $C_4\times C_{2p}$, $E_8\times C_p$, $C_{6}\times C_{3^k}$, $E_9 \times C_{2q}$, where $p$ is an odd prime, $q$ is a prime, and $k\geq 1$, is cyclotomic or a nontrivial tensor or generalized wreath product. 
\end{theo}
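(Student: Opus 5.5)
The plan is to treat the two families separately, exactly as the text indicates: groups of order $8p$ (namely $C_4\times C_{2p}$ and $E_8\times C_p$) and groups of twice odd order (namely $C_6\times C_{3^k}$ and $E_9\times C_{2q}$). In both cases let $\mathcal{A}$ be a nontrivial $S$-ring over $G=P\times Q$. Here $P$ is the Sylow subgroup of the "large" prime-power part and $Q$ is the complementary cyclic part: $C_p$ or $C_{3^k}$, together with the $C_2$ factor where present. I will use the standard toolkit for $S$-rings over abelian groups:
\begin{itemize}
\item the Schur--Wielandt theory of $\mathcal{A}$-subgroups, in particular that $G_{(p)}=\{g^p\}$ and the Sylow subgroups relevant to multipliers are $\mathcal{A}$-subgroups up to the usual thinness issues;
\item the known descriptions of $S$-rings over the small factors (cyclic $p$-groups, $C_2\times C_{2^k}$, $E_4$, $E_8$, $C_p$, $E_9$, $C_3\times C_{3^k}$) as cyclotomic or built by tensor and generalized wreath products;
\item the criterion for an $S$-ring over $P\times Q$ with coprime orders: if both $P$ and $Q$ are $\mathcal{A}$-subgroups, then $\mathcal{A}$ is a tensor product, or a nontrivial $U/L$-wreath product arises.
\end{itemize}

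The key steps, in order, are as follows.

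\textbf{Step 1.} Show that the Sylow $p$-subgroup (respectively the subgroup of order $3^{k+1}$, or the $E_9$-part) is an $\mathcal{A}$-subgroup. Since $\gcd(|P|,|Q|)=1$, the map $x\mapsto x^m$ for suitable $m$ preserves basic sets by Schur's multiplier theorem. This gives that $P$ and $Q$ are $\mathcal{A}$-subgroups, except when some basic set is "mixed" in a degenerate way.

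\textbf{Step 2.} If both $P$ and $Q$ are $\mathcal{A}$-subgroups, look at $\mathcal{A}_P$ and $\mathcal{A}_Q$. If every basic set $X$ satisfies $X=X_P\times X_Q$, then $\mathcal{A}=\mathcal{A}_P\otimes\mathcal{A}_Q$, which is a nontrivial tensor product. Otherwise, use the standard lemma that a basic set $X$ projecting onto nontrivial sets in both factors, but not being their product, forces a nontrivial radical $\rad(X)$. This radical yields a nontrivial $U/L$-wreath product with $L=\rad(X)$ and $U=\langle X\rangle$, or a cyclotomic structure when $\mathcal{A}_P$ and $\mathcal{A}_Q$ are both cyclotomic with compatible automorphism groups.

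\textbf{Step 3.} Handle the remaining case, in which $\mathcal{A}$ is primitive on some section or the factors are not $\mathcal{A}$-subgroups. Here $|G|$ is small relative to the structure, and I would invoke the fact that primitive $S$-rings over these abelian groups are trivial: they are not cyclic of prime-power order, and by Burnside--Schur together with Wielandt's results, a primitive $S$-ring over such a group is trivial. This reduces the problem to an $\mathcal{A}$-subgroup of intermediate order. From that point, induction on the section chain gives the wreath-product decomposition.

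The main obstacle is Step 2 in the case where both projections are rank-two or otherwise "thick" on the $2$-part, for instance $\mathcal{A}_P$ a non-cyclotomic $S$-ring over $E_8$ or $C_4\times C_2$. In that situation one must verify case by case that a non-product basic set really has a nontrivial radical, or else that the whole ring is cyclotomic. I would first try to enumerate the $S$-rings over $E_8$, $C_2\times C_4$, and $E_9$ together with their automorphism groups, and then check compatibility of the possible "gluings" with $\mathcal{A}_Q$ through the isomorphism of quotient $S$-rings $\mathcal{A}_{P/L_P}\cong\mathcal{A}_{Q/L_Q}$. That isomorphism is forced by the mixed basic sets, and a nontrivial kernel in it gives the generalized wreath product.
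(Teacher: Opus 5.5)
There is a genuine gap, and it starts with Step~1. Schur's multiplier theorem only says that $X^{(m)}$ is a basic set for $m$ coprime to $|G|$. It does not make $\{g^p:g\in G\}$, a Sylow subgroup, or a Hall subgroup an $\mathcal{A}$-subgroup. For instance, over $E_8\times C_p$ the nontrivial $S$-ring $\mathbb{Z}U\wr\mathcal{T}_{G/U}$ with $|U|=2$ has only $1$, $U$, $G$ as $\mathcal{A}$-subgroups.

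So the nondense case is a real case, and Step~3 does not handle it. Wielandt's theorem (over an abelian group of composite order with a cyclic Sylow subgroup, every primitive $S$-ring is trivial) is correct, but it only rules out primitivity. The existence of some proper nontrivial $\mathcal{A}$-subgroup does not by itself give a tensor or generalized wreath decomposition, and ``induction on the section chain'' does not produce one.

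The paper treats this case with the structural result of the first paper (Lemma~\aref{A-nonpowernew1}, supplemented by Lemma~\aref{A-nonpowernew2}). Write $G=H\times P$ with $|P|$ prime, let $H_1$ be the maximal $\mathcal{A}$-subgroup inside $H$, and let $P_1$ be the least $\mathcal{A}$-subgroup containing $P$. If $H_1<H$, then one of the following holds:
\begin{itemize}
\item $\mathcal{A}=\mathcal{A}_{H_1}\star\mathcal{A}_{P_1}$ with $H_1P_1=G$;
\item $\mathcal{A}=\mathcal{A}_{H_1}\wr\mathcal{T}_{G/H_1}$;
\item $\mathcal{A}$ is the nontrivial $(H_1P_1)/P_1$-wreath product.
\end{itemize}
For order $8p$, the case where $C_p$ is not an $\mathcal{A}$-subgroup is reduced to this one by duality.

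For $C_6\times C_{3^k}$ and $E_9\times C_{2q}$ with $q$ odd, split off $P\cong C_2$. Then the dense case is immediate, since every basic set outside $H\cup P$ is $X_H\times P^\#$. So for these groups the whole content is the nondense analysis, which your proposal does not contain. The group $E_9\times C_4$ has no such splitting, and the paper checks it by computer.

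Step~2 also rests on a false principle: a basic set that is not $X_H\times X_P$ need not have nontrivial radical. Suppose $7\mid p-1$. Let $K\le\aut(E_8\times C_p)$ be the subdirect product of a Singer cycle $C_7\le\aut(E_8)$ and $\aut(C_p)$, obtained by identifying $C_7$ with the quotient of $\aut(C_p)$ by its subgroup of index~$7$. The $K$-orbits outside $E_8\cup C_p$ meet each $E_8$-coset in at most one point and have trivial radical. Moreover, $\cyc(K,E_8\times C_p)$ is neither a tensor nor a generalized wreath product. So your clause ``or a cyclotomic structure when the automorphism groups are compatible'' is exactly what has to be proved.

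The gluing also cannot be an isomorphism of quotient $S$-rings of the two factors, as you suggest, because those sections have coprime orders. What the mixed basic sets actually force is $X=\bigcup_{\Delta\in\Pi_H}\Delta\times\Delta^{\varphi}$. Here $\Pi_H$ is a uniform partition of $X_H$, $\Pi_P=\orb(K_P^0,X_P)$ is one of $X_P$, and $\varphi$ is a bijection between them (Lemmas~\ref{basesethp} and~\ref{orbitspip}). One must then show one of two things:
\begin{itemize}
\item $\varphi$ intertwines the actions of some $K_H/K_H^0$ and $K_P/K_P^0$ (Lemma~\ref{subdirect}), which gives cyclotomicity;
\item the blocks $\Delta$ are cosets of a subgroup of order~$2$, which gives a radical.
\end{itemize}
This is the substance of the dense case:
\begin{itemize}
\item for $\mathcal{A}_H=\mathcal{T}_{E_8}$, one must prove that the induced $7$-cycle on $H^\#$ is a group automorphism;
\item in the wreath-type cases, certain block sizes have to be excluded (Lemma~\ref{wreath8p});
\item one subcase for the $S$-rings with automorphism group $D_8$ is settled only by passing to the dual $S$-ring.
\end{itemize}
Your final paragraph correctly anticipates a case analysis over the $S$-rings on groups of order~$8$, but as written it is a plan rather than a proof.
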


A similar statement was proved for an arbitrary cyclic group in~\cite{LM1,LM2} and for some other abelian groups in~\cite{EKP2,MP0,PR,Ry2,Ry5}. On the other hand, the above statement does not hold for every abelian group (see~\cite{Wi}). It seems interesting to characterize all groups whose all nontrivial $S$-rings are cyclotomic or nontrivial tensor or generalized wreath products.

Summarizing the results of~\cite[Theorem~1.1]{EKP1},~\cite[Theorems~1.2-1.3]{EKP2},~\cite[Theorem~1.1]{MP},~\cite[Theorem~1.1]{PR},~\cite[Theorem~1.1]{Ry2},~\cite[Theorems~1.2-1.4]{Ry5}, and Theorems~\ref{main1} and~\ref{main2}, we obtain a classification of abelian Schur groups.

\begin{theo2}
An abelian group is a Schur group if and only if it is isomorphic to one of the following groups:

\begin{enumerate}

\tm{1} $C_n$, where $n\in\{p^k, pq^k, 2pq^k, pqr, 2pqr\}$, $p$, $q$, and $r$ are primes, and $k\geq 0$ is an integer;

\tm{2} $E_n$, where $n\in\{4,8,9,16,27,32\}$;

\tm{3} $C_2 \times C_{2^k}$, $C_{4}\times C_{2p}$, $E_4 \times C_{p^k}$, $E_4 \times C_{pq}$, $E_{16}\times C_3$, where $p\neq 2$ and $q$ are primes and $k\geq 1$ is an integer;

\tm{4} $C_3 \times C_{3^k}$, $C_{6}\times C_{3^k}$, $E_9\times C_{q}$, $E_9 \times C_{2q}$, where
$q$ is a prime and $k\geq 1$ is an integer.

\end{enumerate}

\end{theo2}

One can also deduce the statement below from the above classification, Theorem~\ref{main15}, the results of the papers~\cite{EKP2,LM1,LM2,MP0,PR,Ry2,Ry5}, and computations using the package~\cite{GAP}.

\begin{corl1}
Every nontrivial $S$-ring over an abelian Schur group is cyclotomic or a nontrivial tensor or generalized wreath product. 
\end{corl1}

We finish the introduction with a brief outline of the paper. Throughout the paper, we use the notation and preliminaries from the first paper on the classification of abelian Schur groups~\cite{Ry5}. In Section~$2$, we give some notation and preliminary statements which do not appear in~\cite{Ry5}. In Section~$3$, we study a structure of a basic set of an $S$-ring over an abelian group having a Sylow subgroup of prime order. Section~$4$ contains a description of $S$-rings over abelian groups of order~$8$. In Sections~$5$,  we provide a characterization of $S$-rings over the groups $C_4\times C_{2p}$ and $E_8\times C_p$. Theorem~\ref{main1} is proved in Sections~$6$. Section~$7$ contains necessary information on $S$-rings over the groups $C_3\times C_{3^k}$. A characterization of $S$-rings over the groups $C_{6}\times C_{3^k}$ and $E_9 \times C_{2q}$ is given in Section~$8$. Theorem~\ref{main2} is proved in Section~$9$.

\section{Preliminaries}

Throughout the text, we freely use basic definitions and facts from the $S$-ring theory. For a necessary background, we refer the reader to the first paper on the classification of abelian Schur groups~\cite{Ry5}, where most of the preliminary material is contained. In this paper, we follow the notation and terminology from~\cite{Ry5}. When referring to that paper, we keep only the number of the statement, preceding it by the letter A (e.g.,
instead of~\cite[Lemma~2.1]{Ry5}, we write Lemma~\aref{A-intersection}). Some additional or specific notation is listed below.

\vspace{2mm}

\noindent The set of all nontrivial elements of a group $G$ is denoted by $G^\#$.

\vspace{2mm}

\noindent The projections of $X\subseteq G_1\times G_2$ to $G_1$ and $G_2$ are denoted by $X_{G_1}$ and $X_{G_2}$, respectively.

\vspace{2mm}

\noindent The trivial $S$-ring over a group $G$ is denoted by $\mathcal{T}_G$.

\vspace{2mm}

\noindent If $\mathcal{A}$ is an $S$-ring over a group $G$ and $T$ is an $\mathcal{A}$-set, then the set of all basic sets of $\mathcal{A}$ inside $T$ is denoted by $\mathcal{S}(\mathcal{A})_T$.

\vspace{2mm}

\noindent  Given an abelian group $G$ and an $S$-ring $\mathcal{A}$ over $G$, the group and $S$-ring dual to $G$ and $\mathcal{A}$, respectively, are denoted by $\widehat{G}$ and $\widehat{\mathcal{A}}$, respectively. If $H\leq G$, then the image of $H$ under the uniquely determined lattice antiisomorphism between the subgroups of~$G$ and~$\widehat{G}$ is denoted by~$H^\bot$.

\vspace{2mm}

\noindent  Given $G=G_1\times G_2$, $K_i^0\trianglelefteq K_i\leq \aut(G_i)$, $i\in \{1,2\}$, such that $K_1/K_1^0\cong K_2/K_2^0$, and an isomorphism $\psi$ from $K_1/K_1^0$ to $K_2/K_2^0$, put 
$$K(K_1,K_1^0,K_2,K_2^0,\psi)=\{(\sigma,\tau)\in K_1\times K_2:~(\sigma^{\pi_1})^\psi=\tau^{\pi_2}\},$$ 
where $\pi_1$ and $\pi_2$ are the canonical epimorphisms from $K_1$ to $K_1/K_1^0$ and from $K_2$ to $K_2/K_2^0$, respectively.
\vspace{2mm}

We also provide in this section the statements which are used in the proofs of the results of the current paper and do not appear in~\cite{Ry5}.

\begin{lemm}\label{cayleymin}
Let $\mathcal{A}$ be an $S$-ring over an abelian group $G$ and $S=U/L$ an $\mathcal{A}$-section. Suppose that $\mathcal{A}$ is the $S$-wreath product and the $S$-rings $\mathcal{A}_U$ and $\mathcal{A}_{G/L}$ are schurian. Then $\mathcal{A}$ is schurian whenever $\mathcal{A}_U$ and $\mathcal{A}_{G/L}$ are cyclotomic and $\mathcal{A}_S$ is Cayley minimal.
\end{lemm}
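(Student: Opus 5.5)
We would reduce the statement to the known schurity criterion for generalized wreath products over abelian groups from~\cite{Ry5} (the Evdokimov--Ponomarenko criterion, recorded there as a lemma in the preliminaries). It says that an $S$-wreath product $\mathcal{A}$ with schurian $\mathcal{A}_U$ and $\mathcal{A}_{G/L}$ is schurian if and only if there are $\Delta_0\in\aut(\mathcal{A}_U)$ and $\Delta_1\in\aut(\mathcal{A}_{G/L})$ whose restrictions to the section $S$ coincide, i.e. $(\Delta_0)^S=(\Delta_1)^S$. A sufficient version of this condition, also available in~\cite{Ry5}, is
$$\aut(\mathcal{A}_S)=\aut(\mathcal{A}_U)^S\,\aut(\mathcal{A}_{G/L})^S .$$
So the plan is to construct a pair of subgroups with the same restriction to $S$.

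First, since $\mathcal{A}_U$ is cyclotomic, it equals $\cyc(K_0,U)$ for some $K_0\le\aut(U)$. Put $\Delta_0=U_{right}\rtimes K_0\le\aut(\mathcal{A}_U)$. Because $L$ and $U$ are $\mathcal{A}$-groups, $K_0$ leaves $L$ invariant, so $K_0$ acts on $S=U/L$ by automorphisms. We get $(\Delta_0)^S=S_{right}\rtimes K_0^S$, and $\mathcal{A}_S=\cyc(K_0^S,S)$, since the orbits of $K_0^S$ are exactly the images of the orbits of $K_0$ inside $U$.

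Next we do the same on the other side. Since $\mathcal{A}_{G/L}=\cyc(K_1,G/L)$, we set $\Delta_1=(G/L)_{right}\rtimes K_1$. Each automorphism in $K_1$ stabilizes the $\mathcal{A}_{G/L}$-subgroup $U/L$, so $(\Delta_1)^S=S_{right}\rtimes K_1^S$, and again $\mathcal{A}_S=\cyc(K_1^S,S)$.

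Now Cayley minimality is used. Both $(\Delta_0)^S$ and $(\Delta_1)^S$ are Cayley groups over $S$, i.e. they contain $S_{right}$. They are $2$-equivalent to each other, because each has $\mathcal{A}_S$ as its $S$-ring. Cayley minimality of $\mathcal{A}_S$ means that all such groups share the same minimal members, or more concretely, that any Cayley group $\Gamma$ with $V(\Gamma,S)=\mathcal{A}_S$ contains a fixed minimal one $\Gamma_{\min}=S_{right}\rtimes M$ with $M\le\aut(S)$.

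Then we shrink $K_0$ and $K_1$. We replace $K_0$ by the full preimage of $M$ under the restriction map $K_0\to K_0^S$, and do the same for $K_1$.

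The main obstacle is that the shrunken groups must still have the right orbits on $U$ and on $G/L$. Since $M\le K_0^S$ and $M$ has the same orbits on $S$ as $K_0^S$, this is not automatic: shrinking could split basic sets of $\mathcal{A}_U$ that lie outside $L$ or project onto $S$. To handle this, we first try to avoid shrinking altogether. Because $M\le K_0^S\cap K_1^S$, take $\Delta_0'=U_{right}\rtimes\widetilde K_0$ and $\Delta_1'=(G/L)_{right}\rtimes\widetilde K_1$, where $\widetilde K_i$ is the preimage of $M$. We then need only that these groups are contained in the automorphism groups, not that they induce the $S$-rings. The criterion only demands $\Delta_0\le\aut(\mathcal{A}_U)$ and $\Delta_1\le\aut(\mathcal{A}_{G/L})$ with equal restrictions to $S$, and subgroups of automorphism groups remain automorphisms. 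Both $\Delta_0'$ and $\Delta_1'$ restrict to $S_{right}\rtimes M$ on $S$, so the criterion applies and gives that $\mathcal{A}$ is schurian.
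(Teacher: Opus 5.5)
There is a genuine gap in your last paragraph. Your first two steps are correct and are exactly the paper's start. Writing $\mathcal{A}_U=\cyc(K_0,U)$ and $\mathcal{A}_{G/L}=\cyc(K_1,G/L)$, the groups $\Delta_0=U_{right}\rtimes K_0$ and $\Delta_1=(G/L)_{right}\rtimes K_1$ have $S$-rings exactly $\mathcal{A}_U$ and $\mathcal{A}_{G/L}$, and $\cyc(K_0^S,S)=\mathcal{A}_S=\cyc(K_1^S,S)$.

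\textbf{The criterion is misstated.} The Evdokimov--Ponomarenko criterion (Lemma~\aref{A-schurwr}) does not accept arbitrary $\Delta_0\le\aut(\mathcal{A}_U)$ and $\Delta_1\le\aut(\mathcal{A}_{G/L})$ with equal restrictions to $S$. The two groups must contain the regular groups, and their $S$-rings (the partitions into orbits of the stabilizer of the identity) must be exactly $\mathcal{A}_U$ and $\mathcal{A}_{G/L}$. The containment-only version you use is vacuous. The groups $U_{right}$ and $(G/L)_{right}$ always satisfy it, and both restrict to $S_{right}$. It would therefore make every $S$-wreath product with schurian $\mathcal{A}_U$ and $\mathcal{A}_{G/L}$ schurian, which is false; nonschurian generalized wreath products with schurian sections exist, for example over cyclic groups. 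So the obstacle you identified is real. The preimages $\widetilde K_0,\widetilde K_1$ may split basic sets, and then $U_{right}\rtimes\widetilde K_0$ and $(G/L)_{right}\rtimes\widetilde K_1$ are not admissible groups for the criterion.

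\textbf{Cayley minimality is misread.} It is a uniqueness property, not the existence of a common minimal Cayley group. It says that any $M\le\aut(S)$ with $\cyc(M,S)=\mathcal{A}_S$ equals $\aut(\mathcal{A}_S)\cap\aut(S)$. Applied to $K_0^S$ and $K_1^S$, it gives $K_0^S=K_1^S=\aut(\mathcal{A}_S)\cap\aut(S)$ at once. Hence $\Delta_0^S=\Delta_1^S$ for the original, unshrunk groups, and Lemma~\aref{A-schurwr} finishes the proof; this is precisely the paper's argument. Your own remark that $M\le K_0^S$ has the same orbits on $S$ as $K_0^S$ already forces $M=K_0^S$ under the correct definition, so no shrinking is ever needed.
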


\begin{proof}
Let $M_1\leq \aut(U)$ and $M_0\leq \aut(G/L)$ be such that $\mathcal{A}_U=\cyc(M_1,U)$ and $\mathcal{A}_{G/L}=\cyc(M_0,G/L)$. Then $\mathcal{A}_S=\cyc(M_1^S,S)=\cyc(M_0^S,S)$. Since $\mathcal{A}_S$ is Cayley minimal, we conclude that $M_1^S=M_0^S=\aut(\mathcal{A}_S)\cap \aut(S)$. Therefore $K_1^S=K_0^S$, where $K_1=U_r\rtimes M_1$ and $K_0=(G/L)_r\rtimes M_0$. Thus, $K_1$ and $K_0$ satisfy the conditions from Lemma~\aref{A-schurwr} and hence $\mathcal{A}$ is schurian by Lemma~\aref{A-schurwr}.
\end{proof}

The lemma below is a special case of~\cite[Theorem~22]{BC} which provides a sufficient condition of schurity for a crested product of association schemes.

\begin{lemm}\label{schurstar}
Let $\mathcal{A}$ be an $S$-ring over a group $G$. Suppose that $\mathcal{A}=\mathcal{A}_{L} \star \mathcal{A}_{U}$ for some $\mathcal{A}$-subgroups $L$ and $U$ and the $S$-rings $\mathcal{A}_L$ and $\mathcal{A}_{U/(L\cap U)}$ are schurian. Then $\mathcal{A}$ is also schurian.
\end{lemm}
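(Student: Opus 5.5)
The paper introduces this lemma as a special case of \cite[Theorem~22]{BC}. My plan is to check that our hypotheses are exactly those of that theorem once $S$-rings are translated into association schemes. If a self-contained argument is wanted instead, I would build an explicit permutation group whose orbit partition of the point stabilizer is $\mathcal{A}$.

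\textbf{Translating the hypotheses.} Recall that $\mathcal{A}=\mathcal{A}_L\star\mathcal{A}_U$ means the following: $L\trianglelefteq G$ and $L,U$ are $\mathcal{A}$-subgroups. Every basic set $X$ outside $U$ is a union of $L$-cosets. Every basic set of $\mathcal{A}$ inside $U$ lies either in $L$ or is a union of $(L\cap U)$-cosets. Equivalently, the scheme of $\mathcal{A}$ is a crested product of the scheme of $\mathcal{A}_L$ and the quotient scheme of $\mathcal{A}_{G/L}$. The crest is given by the equivalence relations corresponding to $L$ and $U$.

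I would first show that $\mathcal{A}_{G/L}$ is schurian, using the hypotheses on $\mathcal{A}_U$ and $\mathcal{A}_{U/(L\cap U)}$. Since $G=LU$ in the standard setting of the star product, we have $G/L\cong U/(L\cap U)$. The star condition ensures that $\mathcal{A}_{G/L}$ coincides with $\mathcal{A}_{U/(L\cap U)}$ under this isomorphism. Hence the two schemes to be crested are schurian, which is precisely the hypothesis of \cite[Theorem~22]{BC}. That theorem then gives schurity of the crested product, i.e.\ of $\mathcal{A}$.

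\textbf{Direct construction.} Put $K_1=\aut(\mathcal{A}_L)$ and $K_2=\aut(\mathcal{A}_{U/(L\cap U)})$, both $2$-closed and realizing the respective $S$-rings. I would take the group $K$ of all permutations $f$ of $G$ with two properties. First, $f$ permutes the $L$-cosets, inducing an element of $K_2$ via $G/L\cong U/(L\cap U)$. Second, the restriction of $f$ to each $L$-coset, after conjugating by right translations, lies in $K_1$. This is a wreath-type subgroup of $K_1\wr K_2$. It contains $G_r$, since right translations satisfy both conditions.

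One then checks two things. First, the orbits of $K_e$ outside $L$ are the full preimages of the basic sets of $\mathcal{A}_{G/L}$, because $K_e$ acts independently on the fibers. This matches the structure of basic sets outside $U$ and inside $U\setminus L$. Second, inside $L$ the orbits of $K_e$ are the orbits of $(K_1)_e$.

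\textbf{Main obstacle.} The delicate step is making sure $K_e$ does not fuse basic sets of $\mathcal{A}$ that lie outside $L$ but inside $U$. It suffices that, for such basic sets, the partition into $(L\cap U)$-cosets is already seen by the quotient $\mathcal{A}_{U/(L\cap U)}$. I expect this to follow from the star-product definition, but verifying it is where care is needed. This is the reason I would rely on \cite[Theorem~22]{BC}.
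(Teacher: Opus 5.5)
Your main route, viewing the star product as a crested product and applying \cite[Theorem~22]{BC}, is exactly the paper's; the paper gives no argument beyond that citation. However, the translation you write down is wrong, and so the check that the theorem applies is not valid as written.

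\textbf{The star product.} Put $V=L\cap U$. In $\mathcal{A}_L\star\mathcal{A}_U$, the basic sets in $U\setminus L$ are unions of $V$-cosets, and every basic set outside $L\cup U$ is a product $RS$ with $R\in\mathcal{S}(\mathcal{A}_L)$ and $S\in\mathcal{S}(\mathcal{A}_U)$. Basic sets outside $U$ are \emph{not} unions of $L$-cosets: for $V=1$ the star product is the tensor product $\mathcal{A}_L\otimes\mathcal{A}_U$. What does hold is the following. All basic sets outside $L$ are unions of $V$-cosets, and the basic sets outside $L\cup U$ are exactly the sets $RS$ with $R\subseteq L\setminus V$ and $S\subseteq U\setminus L$.

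Assume $G$ is abelian, as in every application in the paper. Write elements as $lu$ with $l\in L$ and $u$ in a fixed transversal of $V$ in $U$. Then the Cayley scheme of $\mathcal{A}$ is the crested product of the schemes of $\mathcal{A}_L$ and $\mathcal{A}_{U/V}\cong\mathcal{A}_{G/L}$. The relevant partitions are the partition of $L$ into $V$-cosets and the equality partition of $U/V$, not a crest ``given by $L$ and $U$''. This is what must be matched against \cite[Theorem~22]{BC}. Also, only $\mathcal{A}_L$ and $\mathcal{A}_{U/V}$ are assumed schurian; nothing is assumed about $\mathcal{A}_U$.

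\textbf{The direct construction.} It fails, and the problem is more basic than the step you flag. The group you describe is the whole imprimitive wreath product $K_1\wr K_2$. Since $K_1\geq L_r$, for every $x\notin L$ the $K_e$-orbit of $x$ contains the whole coset $Lx$. So this group yields $\mathcal{A}_L\wr\mathcal{A}_{G/L}$, which is strictly coarser than $\mathcal{A}$ whenever $L$ and $U$ are incomparable. For $V=1$ you would get $\mathcal{A}_L\wr\mathcal{A}_U$ instead of $\mathcal{A}_L\otimes\mathcal{A}_U$.

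The fix is to couple the fibre components:
\begin{enumerate}
\item Let $K_1^0\trianglelefteq K_1$ be the kernel of $K_1$ on the set of $V$-cosets of $L$. This set is $K_1$-invariant because $V$ is an $\mathcal{A}_L$-subgroup.
\item Index the fibres by the transversal above, and keep only those elements of $K_1\wr K_2$ whose components on all fibres lie in one coset of $K_1^0$.
\item This is a group, and it still contains $G_r$: the cocycle of the transversal takes values in $V$, and $V_r\le K_1^0$.
\item Since $K_1^0\geq V_r$ is transitive on every $V$-coset, the stabilizer of $e$ has orbits $R$ inside $L$ and $RVS=RS$ outside $L$. These are exactly the basic sets of $\mathcal{A}$.
\end{enumerate}
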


\begin{lemm}\label{cyclpwreath}
Let $\mathcal{A}$ be a nontrivial generalized wreath product over a cyclic $p$-group~$G$. Then the following statements hold:
\begin{enumerate}

\tm{1} $\mathcal{A}$ is the nontrivial $U_1/L_1$-wreath product for an $\mathcal{A}$-section $U_1/L_1$ of $G$ such that $L_1$ is the least nontrivial $\mathcal{A}$-subgroup of $G$ and $|\rad(\mathcal{A}_{U_1})|=1$;

\tm{2} $\mathcal{A}$ is the nontrivial $U_0/L_0$-wreath product for an $\mathcal{A}$-section $U_0/L_0$ of $G$ such that $U_0$ is the greatest proper $\mathcal{A}$-subgroup of $G$ and $|\rad(\mathcal{A}_{G/L_0})|=1$.
\end{enumerate}
\end{lemm}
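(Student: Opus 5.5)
The plan is to start from an arbitrary nontrivial decomposition and move its ends to the extreme positions. Let $\mathcal{A}$ be the nontrivial $U/L$-wreath product, so $L\leq U$ are $\mathcal{A}$-subgroups, $L\neq e$, $U\neq G$, and every basic set outside $U$ is a union of $L$-cosets. Since $G$ is a cyclic $p$-group, its subgroups form a chain, and so do the $\mathcal{A}$-subgroups. Therefore the least nontrivial $\mathcal{A}$-subgroup $L_1$ lies in $L$. Likewise $U$ lies in the greatest proper $\mathcal{A}$-subgroup $U_0$. Statements (1) and (2) are dual: passing to $\widehat{\mathcal{A}}$ swaps them, because $(U/L)^\bot=L^\bot/U^\bot$. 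So it is enough to prove (1) carefully and get (2) by duality, or by the same argument with the roles of subgroups and quotients reversed.

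For (1), take the largest $\mathcal{A}$-subgroup $U_1$ such that $\mathcal{A}$ is the $U_1/L_1$-wreath product; this means every basic set outside $U_1$ is a union of $L_1$-cosets. The starting decomposition shows this is possible with $U_1=U$, because $L_1\leq L$ and so every basic set outside $U$ is a union of $L_1$-cosets. Hence $U_1$ exists and $U\leq U_1<G$, so the decomposition is nontrivial.

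The remaining claim is $|\rad(\mathcal{A}_{U_1})|=1$. Suppose instead that $\rad(\mathcal{A}_{U_1})$ is nontrivial. Since it is an $\mathcal{A}$-subgroup, it contains $L_1$ by minimality. I will use the standard fact (Lemma~A-rad or its analogue for cyclic $p$-groups) that every basic set outside the least nontrivial $\mathcal{A}$-subgroup has nontrivial radical. Then let $U'$ be the $\mathcal{A}$-subgroup right below $U_1$ in the chain. The basic sets in $U_1\setminus U'$ then have radical containing $L_1$. For a cyclic $p$-group, a basic set $X$ lying outside a subgroup $H$ with $L_1\leq H$ has $\rad(X)$ as an $\mathcal{A}$-subgroup, and a nontrivial $\mathcal{A}$-subgroup contains $L_1$. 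So every basic set outside $U'$ is a union of $L_1$-cosets, and $\mathcal{A}$ is the $U'/L_1$-wreath product.

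This only shrinks $U$, so it is the wrong direction. The correct choice is the smallest $U_1$ containing $L_1$ such that $\mathcal{A}$ is the $U_1/L_1$-wreath product. The argument above shows that if $\rad(\mathcal{A}_{U_1})\neq e$, then the previous $\mathcal{A}$-subgroup $U'$ also works. This is fine as long as $U'\geq L_1$. If $U_1=L_1$, then $\mathcal{A}_{U_1}$ has radical $L_1$, but a wreath product over $L_1/L_1$ is still a valid nontrivial decomposition. The stated condition then needs $U_1$ with trivial radical, so the case where $\mathcal{A}_{L_1}$ is a trivial $S$-ring on a group of order $p$ needs separate handling: for $|L_1|=p$ and $p>2$ the trivial ring has radical $e$, so this case is fine; for $p=2$ a separate check is needed.

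The main obstacle is precisely this step: proving that every basic set of $\mathcal{A}_{U_1}$ outside the predecessor $\mathcal{A}$-subgroup has radical containing $L_1$ whenever $\rad(\mathcal{A}_{U_1})$ is nontrivial. I would try to prove it with the known structure of $S$-rings over cyclic $p$-groups (Lemma~A on radicals of basic sets outside the minimal subgroup in cyclic $p$-groups, Leung–Man / Evdokimov–Ponomarenko). The point is that for $X\subseteq G\setminus U'$, the radical $\rad(X)$ is independent of the choice of $X$ and equals $\rad(\mathcal{A}_{U_1})$ restricted to the top layer.
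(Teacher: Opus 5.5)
Your eventual choice for (1) is the right skeleton: take the \emph{smallest} $\mathcal{A}$-subgroup $U_1\geq L_1$ for which $\mathcal{A}$ is the $U_1/L_1$-wreath product (so $U_1\leq U\lneq G$ and the decomposition is nontrivial), then descend to the predecessor $U'$ if $\rad(\mathcal{A}_{U_1})\neq e$. As written, however, the proof is not complete. The step you call the main obstacle is left unproved, and two claims you make around it are false.

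The ``standard fact'' that every basic set outside the least nontrivial $\mathcal{A}$-subgroup has nontrivial radical already fails for $\mathcal{A}=\mathbb{Z}G$ with $G\cong C_{p^2}$. Also, $\rad(\mathcal{A}_{L_1})$ is never $L_1$; it is always trivial. The reason is that the radical of a basic set inside $L_1^\#$ is an $\mathcal{A}$-subgroup properly contained in $L_1$, hence trivial. So there is no special base case and no $p=2$ check, and note that $|L_1|$ need not even be $p$.

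The missing step needs no Leung--Man or Evdokimov--Ponomarenko theory. Let $X\in\mathcal{S}(\mathcal{A})$ lie in $U_1\setminus U'$. Then $\langle X\rangle$ is an $\mathcal{A}$-subgroup of $U_1$ not contained in $U'$, so it equals $U_1$ by the chain property. Since the non-generators of the cyclic $p$-group $U_1$ form its maximal subgroup, $X$ contains a generator of $U_1$. Generators of $U_1$ differ by exponents prime to $p$, so by Lemma~\aref{A-burn} all such basic sets are rationally conjugate. They therefore share the radical $\rad(\mathcal{A}_{U_1})$, which, if nontrivial, is an $\mathcal{A}$-subgroup containing $L_1$. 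In that case $U_1\neq L_1$, so $L_1\leq U'$, and $\mathcal{A}$ is the $U'/L_1$-wreath product, contradicting minimality. With these repairs your (1) is a valid self-contained proof. The paper gives no such proof: it simply cites Lemma~\aref{A-cyclpwreath}.

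For (2), duality does exchange the least nontrivial and greatest proper subgroups via Lemma~\aref{A-dual}. You must also transfer $|\rad(\widehat{\mathcal{A}}_{\widehat{U}_1})|=1$ to its dual ring $\mathcal{A}_{G/L_0}$, where $L_0=\widehat{U}_1^\bot$. This is true, since over a cyclic $p$-group trivial radical is equivalent to not being a nontrivial generalized wreath product, which is a self-dual property. But it has to be said.

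The paper instead proves (2) directly, with $U_0$ the greatest proper $\mathcal{A}$-subgroup and $L_0=\rad(\mathcal{A})$:
\begin{enumerate}
\item[(i)] every basic set outside $U_0$ contains a generator of $G$, so its radical is $L_0$;
\item[(ii)] $L_0\neq e$, because such a set lies outside $U$;
\item[(iii)] $\rad(\mathcal{A}_{G/L_0})$ is trivial, since $\rad(X^\pi)=\rad(X)^\pi$ for the canonical epimorphism $\pi\colon G\to G/L_0$.
\end{enumerate}
This is exactly the same top-layer observation that closes your gap in (1), and it needs no extremal choice or descent.
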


\begin{proof}
Statement~$(1)$ is Lemma~\aref{A-cyclpwreath}. Let us prove Statement~$(2)$. Since $G$ is a cyclic $p$-group, $G$ has the greatest proper $\mathcal{A}$-subgroup $U_0$. Let $L_0=\rad(\mathcal{A})$. Clearly,  $|\rad(\mathcal{A}_{G/L_0})|=1$. As $\mathcal{A}$ is a nontrivial generalized wreath product, we have $|L_0|>1$. By the definition of $U_0$, every basic set $X$ of $\mathcal{A}$ outside $U_0$ contains a generator of $G$ and hence $L_0\leq \rad(X)$. Therefore $\mathcal{A}$ is the nontrivial $U_0/L_0$-wreath product as desired.
\end{proof}

\section{Basic sets of a dense $S$-ring over $H\times C_p$}

Throughout this section, $G=H\times P$, where $H$ is an abelian group and $P\cong C_{p}$ with prime $p$ coprime to $|H|$, $\mathcal{A}$ is a \emph{dense} $S$-ring over $G$, i.e. $H$ and $P$ are $\mathcal{A}$-subgroups, and $X\in \mathcal{S}(\mathcal{A})_{G\setminus (H\cup P)}$. Lemma~\aref{A-tenspr}(1) implies that $X_H\in \mathcal{S}(\mathcal{A}_H)$ and $X_P\in \mathcal{S}(\mathcal{A}_P)$. The lemma below is a special case of Lemma~\aref{A-orbit}.

\begin{lemm}\label{orbit}
There exists $K\leq \aut(P)$ such that $X_h=h^{-1}X\cap P\in \orb(K,P)$ for every $h\in X_H$.
\end{lemm}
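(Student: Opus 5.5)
The claim says that all the fibers $X_h$, $h\in X_H$, are orbits of one and the same subgroup $K\leq\aut(P)$. Since the paper calls the lemma a special case of Lemma~\aref{A-orbit}, the plan is to check that the present setting meets the hypotheses of that lemma. If a direct argument is wanted, I would use Schur's theorem on multipliers together with the structure of $S$-rings over the cyclic group $P$ of prime order.

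\textbf{Step 1: a multiplier for each coprime exponent.} Because $p$ is coprime to $|H|$, for every integer $m$ coprime to $p$ I can choose $m'$ with $m'\equiv m \pmod p$ and $m'\equiv 1 \pmod{|H|}$. The map $\sigma_m\colon g\mapsto g^{m'}$ is an automorphism of $G$ that is the identity on $H$ and acts as $x\mapsto x^m$ on $P$. By Schur's theorem on multipliers, $\sigma_m$ permutes the basic sets of $\mathcal{A}$. So $X^{\sigma_m}$ is a basic set. It has the same $H$-projection $X_H$ as $X$ and meets $h^{-1}X^{\sigma_m}\cap P$ in $(X_h)^{m}$.

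\textbf{Step 2: define $K$.} Let
\[
K=\{\sigma\in\aut(P) : \sigma=\sigma_m|_P \text{ for some } m \text{ with } X^{\sigma_m}=X\}.
\]
Then for each $h\in X_H$ the fiber $X_h$ is $K$-invariant, so it is a union of $K$-orbits. It remains to show that $X_h$ is exactly one $K$-orbit.

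\textbf{Step 3: $X_h$ is a single $K$-orbit.} Fix $h\in X_H$ and $x,y\in X_h$. I need an $m$ with $x^m=y$ and $X^{\sigma_m}=X$. Let $m$ satisfy $x^m=y$, so that $hy\in X\cap X^{\sigma_m}$. Since basic sets are either equal or disjoint, $X^{\sigma_m}=X$. This gives $y\in x^K$, so $X_h=x^K$.

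This argument uses nothing about $P$ beyond its being cyclic of prime order coprime to $|H|$; the orbits are taken in $P^\#$, which is consistent because $X\cap P=\varnothing$. The one subtle point is that $K$ must not depend on $h$. That holds because $K$ is defined globally through the stabilizer of $X$ rather than fiberwise, so the same $K$ serves every $h\in X_H$.

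I do not expect a real obstacle. The only delicate step is building the lift $\sigma_m$ that acts trivially on $H$, and that needs only the Chinese Remainder Theorem and $\gcd(p,|H|)=1$. In the write-up I would instead cite Lemma~\aref{A-orbit} directly: take the normal Hall subgroup there to be $P$, and note that density of $\mathcal{A}$ is the hypothesis that lemma requires.
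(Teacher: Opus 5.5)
Your direct argument is correct, and it differs from the paper, which gives no argument here: it declares the statement a special case of Lemma~\aref{A-orbit} from the first paper. You instead prove it from scratch. You apply Schur's multiplier theorem (Lemma~\aref{A-burn}) to an exponent $m'$ with $m'\equiv m \pmod p$ and $m'\equiv 1 \pmod{|H|}$. This is the same device the paper uses in the proofs of Lemmas~\ref{conj} and~\ref{basesethp}. You define $K$ as the restriction to $P$ of the stabilizer of $X$ in the group $\{\sigma_m\}\cong\aut(P)$. That makes $K$ automatically a subgroup (worth saying explicitly) and makes its independence of $h$ transparent. Disjointness of basic sets then gives transitivity on each fiber.

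Your route buys a self-contained proof and shows that density plays no role. The only hypothesis used is $X_h\subseteq P^\#$ for all $h\in X_H$. However, since $e\in X_h$ if and only if $h\in X$, this condition is equivalent to $X\cap H=\varnothing$, not to $X\cap P=\varnothing$ as you wrote. It is also exactly what Step~3 needs, because $x^m=y$ is solvable only when $x\neq e$. Both conditions hold here since $X$ lies outside $H\cup P$, so this is a misattributed justification rather than a gap.

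The citation route buys brevity and places the statement in the general framework of the first paper. Your guess about the precise hypotheses of that lemma cannot be checked from this text, but nothing depends on it, since your direct proof stands on its own.
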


\begin{lemm}\label{conj}
Every $Y\in \mathcal{S}(\mathcal{A})_{X_HP^\#}$ is rationally conjugate to $X$.
\end{lemm}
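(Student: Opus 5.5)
We want to show: if $Y$ is a basic set of $\mathcal{A}$ contained in $X_HP^\#$, then $Y=X^{(m)}$ for some integer $m$ coprime to $|G|$. The natural tool is Schur's theorem on multipliers: for every $m$ coprime to $|G|$ and every basic set $Z$, the set $Z^{(m)}$ is again a basic set. Since $|G|=|H|p$ with $\gcd(|H|,p)=1$, the Chinese remainder theorem lets us choose $m$ with $m\equiv 1\pmod{|H|}$ and $m$ coprime to $p$ with any prescribed residue modulo $p$. Such an $m$ fixes $H$ pointwise and acts on $P$ as an arbitrary prescribed element of $\aut(P)$.

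First we describe $X$ fibrewise. By Lemma~\ref{orbit} there is $K\le\aut(P)$ with $X=\bigcup_{h\in X_H} hX_h$, where each $X_h$ is an orbit of $K$ on $P$. Since $X\cap P=\emptyset$ and $X\subseteq G\setminus(H\cup P)$, each $X_h\subseteq P^\#$. As $\aut(P)$ is cyclic and acts regularly on $P^\#$, the $K$-orbits in $P^\#$ are exactly the cosets of $K$ in $\aut(P)$ applied to a fixed generator $x\in P^\#$. Thus $X_h=x^{K\sigma_h}$ for some $\sigma_h\in\aut(P)$.

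Next we apply the same description to $Y$. Its projection $Y_H$ is a basic set of $\mathcal{A}_H$ by Lemma~\aref{A-tenspr}(1), and $Y_H\subseteq X_H$, which is also basic. Hence $Y_H=X_H$. Lemma~\ref{orbit}, applied to $Y$, gives $K'\le\aut(P)$ with $Y_h\in\orb(K',P)$ for all $h\in X_H$. We need $K'=K$, since orbits of a subgroup of the cyclic group $\aut(P)$ have size equal to its order. I would get this from the standard count: $\underline{X}\,\underline{P}$ restricted to a coset $hP$ has coefficients determined by basic-set structure. Alternatively, and more directly, $\mathcal{A}$ contains $\underline{X}\cdot\underline{Y^{(-1)}}$. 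Its coefficient at $e$ is $|X\cap Y|$ and $0$ if $X\ne Y$. Its coefficients at elements of $P^\#$ are constant on basic sets of $\mathcal{A}_P$, and these are orbits of a group $K_0$ on $P$ containing all the $K$, $K'$ by Lemma~\aref{A-orbit}. Comparing these coefficients should force $|X_h|=|Y_h|$. I expect this equality $|K|=|K'|$ to be the main obstacle. If the product computation is awkward, I would instead use that $X_HP^\#$ is an $\mathcal{A}$-set partitioned into basic sets, each of the form above. Then $P^\#$-translations give $\underline{X}\,\underline{P}=|X_h|\cdot\underline{X_HP}$, with $|X_h|$ constant in $h$, and the element $\underline{X_HP}$ determines a uniform orbit size.

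Finally, take $h_0\in X_H$ and pick $\tau\in\aut(P)$ mapping $X_{h_0}$ onto $Y_{h_0}$; this exists because both are cosets of the same $K$ in a cyclic group. Choose $m$ by CRT so that $m\equiv 1\pmod{|H|}$ and $m$ induces $\tau$ on $P$. Then $X^{(m)}$ is a basic set by Schur's theorem. It contains $h_0Y_{h_0}\subseteq Y$, since $(h_0x)^m=h_0x^m$. Two basic sets that intersect are equal, so $Y=X^{(m)}$, which means $Y$ is rationally conjugate to $X$. Note that this last step needs no size comparison, since basic sets meeting in one element coincide. Therefore the obstacle above can be bypassed entirely: it suffices that $Y_{h_0}$ and $X_{h_0}$ lie in a common $\aut(P)$-orbit, which is automatic because $\aut(P)$ is transitive on $P^\#$. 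Indeed, pick $y\in Y_{h_0}$ and $x\in X_{h_0}$, and choose $m$ with $x^m=y$. Then $h_0y\in X^{(m)}\cap Y$, so $Y=X^{(m)}$.
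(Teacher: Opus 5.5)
Your final argument is correct and is exactly the paper's proof. You take $h_0\in Y_H\subseteq X_H$ together with $x\in X_{h_0}$ and $y\in Y_{h_0}$, and choose $m\equiv 1\pmod{|H|}$ with $x^m=y$, which makes $m$ coprime to $|G|$. Then $h_0y\in X^{(m)}\cap Y$, and Schur's theorem on multipliers gives $Y=X^{(m)}$.

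The earlier discussion of the groups $K$, $K'$ and of whether $|K|=|K'|$ is unnecessary, as you observed yourself, and you can delete it.
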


\begin{proof}
Let $h\in X_H=Y_H$, $g_1\in X_P\subseteq P^\#$, and $g_2\in Y_P\subseteq P^\#$. Since $P\cong C_{p}$ and $|H|$ is coprime to~$p$, there exists a positive integer~$m$ such that $h^m=h$ and $g_1^m=g_2$. Therefore $hg_2=(hg_1)^m\in Y\cap X^{(m)}$. Lemma~\aref{A-burn} implies that $X^{(m)}\in \mathcal{S}(\mathcal{A})$ and hence $Y=X^{(m)}$ as required.
\end{proof}

Clearly, if $X\in \orb(K,G)$ for some $K\leq \aut(G)$ and $Y$ is rationally conjugate to $X$, then $Y\in \orb(K,G)$. Recall that a partition of a set is said to be \emph{uniform} if all its classes have the same cardinality.

\begin{lemm}\label{basesethp}
There exist uniform partitions $\Pi_H$ and $\Pi_P$ of $X_H$ and $X_P$, respectively, and a bijection $\varphi$ from $\Pi_H$ to $\Pi_P$ such that
\begin{equation}\label{xrepresent}
X=\bigcup \limits_{\Delta\in \Pi_H} \Delta\times\Delta^\varphi.
\end{equation}
\end{lemm}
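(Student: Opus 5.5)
We plan to read off the partitions from the fibres $X_h=h^{-1}X\cap P$, $h\in X_H$. By Lemma~\ref{orbit}, there is $K\leq \aut(P)$ such that every $X_h$ is a $K$-orbit on $P$. Since $X\subseteq G\setminus(H\cup P)$ and $X_h\subseteq X_P$, each $X_h$ is a $K$-orbit contained in $X_P\subseteq P^\#$. For the same reason, two fibres $X_h$ and $X_{h'}$ are either equal or disjoint. We therefore define an equivalence relation on $X_H$ by $h\sim h'$ iff $X_h=X_{h'}$, and let $\Pi_H$ be the set of its classes. For $\Delta\in\Pi_H$ we set $\Delta^\varphi=X_h$ for any $h\in\Delta$, and let $\Pi_P=\{X_h:h\in X_H\}$. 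By construction $\varphi$ is a bijection $\Pi_H\to\Pi_P$. The union of all fibres is $X_P$, so $\Pi_P$ is a partition of $X_P$, and it is uniform because all $K$-orbits on $P^\#$ have the same size $|K|$ ($K$ acts semiregularly on $P^\#$, since $\aut(C_p)$ is cyclic of order $p-1$ acting regularly). Finally, $X=\bigcup_{h\in X_H} h\times X_h$, and grouping the $h$ by classes gives formula~\eqref{xrepresent}.

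The remaining point, and the main obstacle, is to show that $\Pi_H$ is uniform, i.e.\ that every $K$-orbit $O\subseteq X_P$ is the fibre of the same number of elements $h$. We will use the counting $|\{h: X_h=O\}|=|X\cap (X_H\times O)|/|O|$ and show that this number does not depend on $O$. The tool is rational conjugation. Choose $m$ coprime to $|G|$ with $m\equiv 1 \pmod{|H|}$ and with $m \bmod p$ mapping a point of $O$ to a point of $O'$, which is possible by the Chinese remainder theorem. Then $X^{(m)}$ is a basic set (Lemma~\aref{A-burn}) that meets $X$: if $hg\in X$ with $g\in O$, then $(hg)^m=hg^m$, and $g^m$ lies in $O'\subseteq X_P$.

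This alone does not give $X^{(m)}=X$, because the point $hg^m$ need not lie in $X$. To fix this we argue as in Lemma~\ref{conj}. The map $x\mapsto x^m$ fixes $H$ pointwise and acts on $P$ as an automorphism commuting with $K$ (since $\aut(P)$ is abelian), so it permutes the $K$-orbits. Moreover $(X^{(m)})_h=(X_h)^m$. Take $h\in X_H$ with $X_h=O$. Then $X^{(m)}$ and $X$ are both basic sets contained in $X_HP^\#$, and $X^{(m)}$ has fibre $O'$ at $h$. Counting with the $S$-ring structure constants, the number of $h$ with $X_h=O$ equals the number of $h$ with $(X^{(m)})_h=O'$.

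It remains to compare the latter with $|\{h:X_h=O'\}|$. For this we use the element $\underline{X}\,\underline{P}\in\mathcal{A}$: its coefficient at $h\in H$ is $|X_h|$ if $h\in X_H$. For the finer count we use $\underline{X}\cdot\underline{Y}^{(-1)}$ with $Y$ ranging over basic sets inside $P$, i.e.\ unions of $K'$-orbits for the group $K'$ with $\mathcal{A}_P=\cyc(K',P)$. If needed we instead invoke the full strength of Lemma~\aref{A-orbit} to get that $X_H$ itself is uniformly partitioned by the fibres. I expect this uniformity of $\Pi_H$ to be the only delicate step.
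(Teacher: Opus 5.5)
\textbf{There is a gap: the uniformity of $\Pi_H$ is never proved, and the route you sketch for it cannot give it.} The rest of your construction is correct. Using Lemma~\ref{orbit}, the fibres $X_h$ are $K$-orbits, so they are equal or disjoint; your $\Pi_H$, $\Pi_P$ and $\varphi$ are well defined; and $\Pi_P$ is uniform by semiregularity of $\aut(P)$ on $P^\#$.

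Why the rational-conjugation step fails: take $m\equiv 1\pmod{|H|}$ with $O^{(m)}=O'\neq O$. Then the power map $g\mapsto g^m$ on $P$ is not in $K$, so $X^{(m)}\neq X$. The equality you obtain, $|\{h:X_h=O\}|=|\{h:(X^{(m)})_h=O'\}|$, is a tautology coming from the bijection $y\mapsto y^m$; it says nothing about how many $h$ satisfy $X_h=O'$ in $X$ itself. Comparing $X^{(m)}$ with $X$ amounts to showing $|X\cap Hg|=|X\cap Hg'|$ for $g\in O$ and $g'\in O'$, which is exactly the invariance you are trying to prove. The element $\underline{X}\,\underline{P}$ only gives the sizes $|X_h|$, which is $P$-side information you already have. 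The closing sentences (``for the finer count \dots'', ``if needed we invoke \dots'') are not an argument.

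\textbf{The missing idea} is Lemma~\aref{A-intersection} applied to the $\mathcal{A}$-subgroup $H$. Equivalently, read off the coefficients of $\underline{X}\,\underline{H}\in\mathcal{A}$ rather than those of $\underline{X}\,\underline{P}$. Let $\Delta\in\Pi_H$ with $\Delta^\varphi=O$, pick $h\in\Delta$ and $g\in O$, and put $x=hg\in X$. Since the fibres are $K$-orbits, $g\in X_{h'}$ holds if and only if $X_{h'}=O$. Hence $X\cap Hx=\{h'g:\ X_{h'}=O\}=\Delta g$, so $|\Delta|=|X\cap Hx|$, and this does not depend on $x\in X$. That gives uniformity of $\Pi_H$ in one line.

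For comparison, the paper indexes the other way, by the $H$-fibres $X_{H,g}$ with $g\in X_P$. It gets the equal-or-disjoint property by rational conjugation with an $m$ fixing $h$ and mapping $g$ to $g'$, so that $X^{(m)}$ meets $X$ and Lemma~\aref{A-burn} forces $X^{(m)}=X$. Both uniformities then come from Lemma~\aref{A-intersection}. Your use of Lemma~\ref{orbit} for the equal-or-disjoint property is a legitimate shortcut.
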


\begin{proof}
Clearly, $X$ can be presented in the form 
$$X=\bigcup \limits_{g\in X_P} X_{H,g} g,$$
where $X_{H,g}\subseteq X_H$ and $\bigcup \limits_{g\in X_P} X_{H,g}=X_H$. Suppose that $X_{H,g}\cap X_{H,g^\prime}\neq \varnothing$ for some $g,g^\prime \in X_P$. Then $hg,hg^\prime\in X$ for some $h\in X_H$. Since $P\cong C_{p}$ and $|H|$ is coprime to~$p$, there exists a positive integer~$m$ such that $g^m=g^\prime$ and $h^m=h$. Therefore $(hg)^m=hg^\prime\in X^{(m)}\cap X$. Together with Lemma~\aref{A-burn}, this implies that $X^{(m)}=X$ and hence $X_{H,g}\subseteq X_{H,g^\prime}$. As $|X_{H,g}|=|X_{H,g^\prime}|$ by Lemma~\aref{A-intersection}, we obtain $X_{H,g}=X_{H,g^\prime}$.

The above discussion yields that $X_{H,g}=X_{H,g^\prime}$ or $X_{H,g}\cap X_{H,g^\prime}=\varnothing$ for all $g,g^\prime\in X_P$. So the binary relation consisting of all pairs $(g,g^\prime)\in X_P\times X_P$ such that $X_{H,g}=X_{H,g^\prime}$ is an equivalence relation on $X_P$. Let $\Pi_P$ be the partition of $X_P$ into the classes of this equivalence relation. Given $\Lambda\in \Pi_P$, the set $X_{H,g}$ does not depend on $g\in \Lambda$ by the definition of $\Pi_P$. For every $\Lambda\in \Pi_P$, put $\Delta(\Lambda)=X_{H,g}$, where $g\in \Lambda$. Clearly, $\Pi_H=\{\Delta(\Lambda):~\Lambda\in \Pi_P\}$ is a partition of $X_H$ and $\varphi:\Delta(\Lambda)\mapsto \Lambda$ is a bijection from $\Pi_H$ to $\Pi_P$ such that Eq.~\eqref{xrepresent} holds. The partitions $\Pi_H$ and $\Pi_P$ are uniform by Lemma~\aref{A-intersection} and we are done. 
\end{proof}

\begin{lemm}\label{orbitspip}
With the notation of Lemma~\ref{basesethp}, there exist groups $K_P^0\leq K_P\leq \aut(P)\cong C_{p-1}$ such that $\mathcal{A}_P=\cyc(K_P,P)$, $X_P\in \orb(K_P,P)$, $\Pi_P=\orb(K_P^0,X_P)$, and $|K_P/K_P^0|=|\Pi_P|$. 
\end{lemm}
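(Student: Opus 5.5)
Since $P\cong C_p$ is cyclic of prime order, every $S$-ring over $P$ is cyclotomic: $\mathcal{A}_P=\cyc(K_P,P)$ for a uniquely determined $K_P\leq \aut(P)\cong C_{p-1}$. This follows from Lemma~\aref{A-burn}, since the basic sets of $\mathcal{A}_P$ inside $P^\#$ are closed under the maps $g\mapsto g^m$ fixing them. The set $X_P$ is a basic set of $\mathcal{A}_P$ by Lemma~\aref{A-tenspr}(1), so $X_P\in \orb(K_P,P)$. Everything therefore reduces to producing $K_P^0$.

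We define $K_P^0$ as the setwise stabilizer in $K_P$ of one class $\Lambda_0\in\Pi_P$. Fix $h\in \Delta_0=\Lambda_0^{\varphi^{-1}}$, so that $\Lambda_0=X_h=h^{-1}X\cap P$. We first check that $K_P$ permutes the classes of $\Pi_P$. Take $\sigma\in K_P$, realized as $g\mapsto g^m$ with $m$ chosen by CRT so that $m\equiv 1 \pmod{|H|}$. Then $X^{(m)}$ is a basic set by Lemma~\aref{A-burn}. It lies in $X_HP^\#$ and has $P$-projection $X_P^\sigma=X_P$, so $X^{(m)}$ meets $X$ and hence $X^{(m)}=X$. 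Consequently, for every $h'\in X_H$ we have $X_{h'}^\sigma=(h'^{-1}X^{(m)})\cap P=X_{h'}$ after raising to the $m$-th power, which fixes $h'$. Since the classes of $\Pi_P$ are exactly the sets $X_{h'}$, this shows that $\sigma$ in fact fixes every class. So every element of $K_P$ induced by such $m$ preserves each $\Lambda\in\Pi_P$.

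This seems too strong, and the point is that the relevant $m$ need not fix $X$. We redo the argument correctly using Lemma~\ref{conj}. For $\sigma\in K_P$, the set $X^{(m)}$ is a basic set in $X_HP^\#$ with projection $X_P$. If $X^{(m)}=X$, then $\sigma$ fixes every class. In general, the group $K_P$ acts on $\Pi_P$ as follows: $\sigma$ sends $X_{h'}$ to $X_{h'}^\sigma$, which is again a class because $X_{h'}^\sigma$ equals the fibre $h'^{-1}X^{(m)}\cap P$ and $X^{(m)}=X$. The condition $X^{(m)}=X$ always holds, because $X^{(m)}$ and $X$ share the projection $X_P$ and have a common element. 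This last fact holds because $X_P$ is a single $K_P$-orbit and every fibre of $X$ over $X_H$ is an orbit of the group $K$ from Lemma~\ref{orbit}.

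The key step, and the main obstacle, is therefore to pin down the action properly. Using Lemma~\ref{orbit}, each class $\Lambda=X_h$ is a $K$-orbit, and we may take $K\leq K_P$ with $X_P$ a single $K_P$-orbit. We set $K_P^0=K$, or more precisely the stabilizer in the abelian group $K_P$ of one class. Because $K_P$ is abelian, all classes, being $K_P$-translates of one another, have the same stabilizer. They are translates because $X_P$ is one $K_P$-orbit and $\Pi_P$ is $K_P$-invariant, the invariance coming from the $X^{(m)}=X$ argument applied to those $m$ for which $X^{(m)}$ intersects $X$. The subtle point is to show that this stabilizer acts transitively on each class, which we get from Lemma~\ref{orbit}. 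Once we have $\Pi_P=\orb(K_P^0,X_P)$, regularity of the abelian group $K_P$ on $X_P$, together with $|X_P|=|K_P|$ (since $K_P$ acts semiregularly on $P^\#$), gives $|K_P/K_P^0|=|X_P|/|\Lambda_0|=|\Pi_P|$, using the uniformity of $\Pi_P$ from Lemma~\ref{basesethp}.
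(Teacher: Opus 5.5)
There is a genuine gap. The step that carries the lemma is that the group $K$ from Lemma~\ref{orbit} lies inside $K_P$, and you only assert it (``we may take $K\leq K_P$''). The justifications you build around it are false.

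Your claim that $X^{(m)}=X$ always holds, for $\sigma\in K_P$ realized by $m\equiv 1\pmod{|H|}$, is wrong: two basic sets with the same projections need not meet. Indeed $X^{(m)}=\bigcup_{h\in X_H}\{h\}\times X_h^{\sigma}$, so $X^{(m)}=X$ exactly when $\sigma$ fixes every fibre $X_h$. If this held for all $\sigma\in K_P$, then, since $K_P$ is transitive on $X_P$, every fibre would equal $X_P$ and $|\Pi_P|=1$. That is false in general. For instance, in the case $\lambda_X=1$ of the proof of Lemma~\ref{rank28p}, the sets $X^{\id_H\times\tau_0^i}$, $i\in\mathbb{Z}_7$, are seven distinct basic sets.

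Restricting to those $m$ with $X^{(m)}\cap X\neq\varnothing$ does not help. Those $\sigma$ fix every class, so they say nothing about how the rest of $K_P$ permutes $\Pi_P$. Consequently three of your claims are unproved: that $\Pi_P$ is $K_P$-invariant, that the classes are $K_P$-translates of one another, and that the stabilizer of $\Lambda_0$ in $K_P$ is transitive on $\Lambda_0$. The last one needs $K\leq K_P$ outright.

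The missing idea is the one the paper uses, and it makes the whole $X^{(m)}$ detour unnecessary: $\aut(P)\cong C_{p-1}$ acts regularly on $P^\#$.
\begin{enumerate}
\item Take a class $\Lambda=X_h$ and $x\in\Lambda$. Then $\Lambda=x^K$ and $X_P=x^{K_P}$.
\item For $k\in K$ we have $x^k\in\Lambda\subseteq X_P$, so $x^k=x^{k'}$ for some $k'\in K_P$. Semiregularity of $\aut(P)$ gives $k=k'$, hence $K\leq K_P$. Equivalently, $|K|=|\Lambda|$ divides $|X_P|=|K_P|$ by uniformity, and a cyclic group has a unique subgroup of each order.
\item Now put $K_P^0=K$. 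The set $X_P$ is $K$-invariant and is partitioned by the classes, which are $K$-orbits, so $\Pi_P=\orb(K_P^0,X_P)$.
\item Finally, $|K_P/K_P^0|=|X_P|/|\Lambda|=|\Pi_P|$ by semiregularity and the uniformity of $\Pi_P$.
\end{enumerate}
Your first paragraph (cyclotomicity of $\mathcal{A}_P$ and $X_P\in\orb(K_P,P)$) and your final count are fine as they stand.
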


\begin{proof}
Lemma~\aref{A-tenspr}(1) implies that $X_P\in \mathcal{S}(\mathcal{A}_P)$. Due to Lemma~\aref{A-cyclprime}, there exists $K_P\leq \aut(P)$ such that $\mathcal{A}_P=\cyc(K_P,P)$ and hence $X_P\in\orb(K_P,P)$. By Lemma~\ref{orbit}, there exists $K_P^0\leq \aut(P)$ such that $\Pi_P=\orb(K_P^0,X_P)$. Since $\aut(P)\cong C_{p-1}$ and $K_P$ and $K_P^0$ are semiregular on $P^\#$, we have $K_P^0\leq K_P$ and $|K_P/K_P^0|=|\Pi_P|$.
\end{proof}

One can see that $|K_P/K_P^0|=|\Pi_P|=|\Pi_H|$, where the first equality holds by Lemma~\ref{orbitspip}, whereas the second one holds by Lemma~\ref{basesethp}. If $K_0\trianglelefteq K\leq \sym(\Omega)$ and $\Delta\in \orb(K,\Omega)$, then every $\sigma\in K/K_0$ induces the permutation on the set $\Pi=\orb(K_0,\Delta)$. We denote this bijection by $\sigma^\Pi$ and avoid $\Pi$ when it is clear from the context.

\begin{lemm}\label{subdirect}
With the notation of Lemma~\ref{basesethp} and Lemma~\ref{orbitspip}, suppose that there exist groups $K_H^0\trianglelefteq K_H\leq \aut(H)$ such that $K_H/K_H^0\cong C_k$, where $k=|\Pi_H|$, $X_H\in \orb(K_H,H)$, and $\Pi_H=\orb(K_H^0,X_H)$, and an isomorphism $\psi$ from $K_H/K_H^0\cong C_k$ to $K_P/K_P^0\cong C_k$ such that
\begin{equation}\label{commutebij}
\varphi\circ(\sigma^\psi)^{\Pi_P}=\sigma^{\Pi_H}\circ\varphi
\end{equation}
for every $\sigma\in K_H/K_H^0$. Then $X\in \orb(K,G)$, where $K=K(K_H,K_H^0,K_P,K_P^0,\psi)$.
\end{lemm}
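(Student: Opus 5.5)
We will show that $X$ coincides with the $K$-orbit of a fixed element $x=h_0g_0\in X$, where $h_0\in X_H$ and $g_0\in X_P$. Here $K$ acts on $G=H\times P$ coordinatewise, $(\sigma,\tau)\colon hg\mapsto h^\sigma g^\tau$. Let $\Delta_0\in\Pi_H$ be the class containing $h_0$. By~\eqref{xrepresent}, $g_0\in\Delta_0^\varphi$. Write $\bar\sigma$ for the image of $\sigma\in K_H$ in $K_H/K_H^0$ and $\bar\tau$ for the image of $\tau\in K_P$ in $K_P/K_P^0$. Then $K=\{(\sigma,\tau):\bar\sigma^\psi=\bar\tau\}$.

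First we prove $x^K\subseteq X$. Take $(\sigma,\tau)\in K$. Then $h_0^\sigma\in\Delta_0^{\bar\sigma}$, since $\Pi_H=\orb(K_H^0,X_H)$ is the system of $K_H^0$-orbits inside the $K_H$-orbit $X_H$, and $\sigma$ permutes these blocks via $\bar\sigma^{\Pi_H}$. Similarly, $g_0^\tau\in(\Delta_0^\varphi)^{\bar\tau}=(\Delta_0^\varphi)^{\bar\sigma^\psi}$. By~\eqref{commutebij}, read as composition of maps on $\Pi_H$ in left-to-right order, $(\Delta_0^\varphi)^{\bar\sigma^\psi}=(\Delta_0^{\bar\sigma})^\varphi$. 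Hence $h_0^\sigma g_0^\tau\in\Delta\times\Delta^\varphi$ with $\Delta=\Delta_0^{\bar\sigma}\in\Pi_H$, and so it lies in $X$ by~\eqref{xrepresent}.

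Next we prove $X\subseteq x^K$. Take $hg\in X$, so $h\in\Delta$ and $g\in\Delta^\varphi$ for some $\Delta\in\Pi_H$.
\begin{itemize}
\item Since $K_H$ is transitive on $X_H$, it acts transitively on the blocks $\Pi_H$. Since $K_H/K_H^0\cong C_k$ with $k=|\Pi_H|$, the induced action is regular, so there is a unique coset $\bar\sigma$ with $\Delta_0^{\bar\sigma}=\Delta$.
\item Within that coset we can pick $\sigma$ with $h_0^\sigma=h$. Choose any $\sigma_1$ in the coset. Then $h_0^{\sigma_1}\in\Delta$, and $\Delta$ is a $K_H^0$-orbit, so we may compose $\sigma_1$ with a suitable element of $K_H^0$ without changing the coset.
\item By~\eqref{commutebij}, the coset $\bar\sigma^\psi$ maps $\Delta_0^\varphi$ to $\Delta^\varphi$. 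Since $\Delta^\varphi$ is a $K_P^0$-orbit by Lemma~\ref{orbitspip}, we can choose $\tau$ in the coset $\bar\sigma^\psi$ with $g_0^\tau=g$.
\end{itemize}
Then $(\sigma,\tau)\in K$ and $x^{(\sigma,\tau)}=hg$.

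Hence $X=x^K\in\orb(K,G)$. The only delicate point is the bookkeeping of the action convention in~\eqref{commutebij}, namely whether the composition is read left-to-right and whether $\varphi$ transports the induced permutations exactly as used above. A secondary point is that the induced action of $K_H/K_H^0$ on $\Pi_H$ is well defined, which holds because $K_H^0\trianglelefteq K_H$.
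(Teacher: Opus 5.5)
Your proposal is correct and follows essentially the same route as the paper: you show that $X$ is $K$-invariant by pushing $\Delta\times\Delta^\varphi$ through $(\sigma,\tau)\in K$ using \eqref{commutebij}, and you show that $K$ is transitive on $X$. Your version spells out the transitivity step that the paper states in one line: choose the coset that carries $\Delta_0$ to $\Delta$, then correct by elements of $K_H^0$ and $K_P^0$ to hit the prescribed points. Your left-to-right reading of \eqref{commutebij} matches the paper's computation $\Delta^{\varphi\circ(\sigma^{\pi_1})^\psi}=\Delta^{\sigma^{\pi_1}\circ\varphi}$.
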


\begin{proof}
Note that the groups $(K_H/K_H^0)^{\Pi_H}$ and $(K_P/K_P^0)^{\Pi_P}$ are regular cyclic groups of order~$k$. Recall that 
$$K=\{(\sigma,\tau)\in K_H\times K_P:~(\sigma^{\pi_1})^\psi=\tau^{\pi_2}\},$$
where $\pi_1$ and $\pi_2$ are the canonical epimorphisms from $K_H$ to $K_H/K_H^0$ and from $K_P$ to $K_P/K_P^0$, respectively. By the definition, $K$ is a subdirect product of $K_H$ and $K_P$. If $(\sigma,\tau)\in K$, then
$$(\Delta\times \Delta^\varphi)^{(\sigma,\tau)}=\Delta^{\sigma^{\pi_1}}\times \Delta^{\varphi\circ\tau^{\pi_2}}=\Delta^{\sigma^{\pi_1}}\times \Delta^{\varphi\circ(\sigma^{\pi_1})^\psi}=\Delta^{\sigma^{\pi_1}}\times \Delta^{\sigma^{\pi_1}\circ\varphi}\subseteq X,$$
where the third equality holds by Eq.~\eqref{commutebij}. Therefore $X$ is $K$-invariant. Since $K_H$ and $K_P$ are transitive on $X_H$ and $X_P$, respectively, $\Pi_H=\orb(K_H^0,X_H)$, $\Pi_P=\orb(K_P^0,X_P)$, and $K$ is a subdirect product of $K_H$ and $K_P$, we conclude that $K$ is transitive on~$X$. Thus, $X\in \orb(K,G)$.
\end{proof}

\begin{rem}\label{subdirectrem}
It can be easily verified that $\psi$ satisfying Eq.~\eqref{commutebij} always exists in Lemma~\ref{subdirect} whenever $k\leq 3$. Indeed, in this case, any bijection from $K_H/K_H^0$ to $K_P/K_P^0$ mapping the trivial element of $K_H/K_H^0$ to the trivial element of $K_P/K_P^0$, in particular, conjugation by $\varphi$, is a group isomorphism.
\end{rem}

\section{$S$-rings over groups of order~$8$}

The next two lemmas follow from computer calculations using the package~\cite{GAP}. In these lemmas, $H\cong C_4\times C_2$ or $H\cong E_8$. In the former case, $A$ and $B$ are subgroups of $H$ such that $A\cong C_4$, $B\cong C_2$, and $H=A\times B$, $a$ and $b$ are generators of~$A$ and~$B$, respectively, $a_0=a^2$, and $A_0=\langle a_0\rangle$. In the latter one, $A$, $B$, and $C$ are subgroups of $H$ such that $A\cong B\cong C \cong C_2$ and $H=A\times B\times C$, and $a$, $b$, and~$c$ are generators of~$A$,~$B$, and~$C$, respectively.

\begin{lemm}\label{c4c2}
Let $\mathcal{A}$ be an $S$-ring over $H$. Then $\mathcal{A}$ is Cayley isomorphic up to taking the dual $S$-ring to one of the following $S$-rings:
\begin{enumerate}

\tm{1} $\mathcal{T}_H$;

\tm{2} a nontrivial tensor product of $S$-rings over groups of orders~$4$ and~$2$;

\tm{3} a nontrivial wreath product of an $S$-ring $\mathcal{B}$ over $L\lneq H$ and $\mathcal{T}_{H/L}$, where $H\cong C_4\times C_2$ and $L\in\{A_0,A\}$, or $H\cong E_8$ and $|L|=2$, or $L\cong E_4$ and $\mathcal{B}=\mathbb{Z}L$;

\tm{4} $\cyc(K,H)$, where $K\leq \aut(H)$ is one of the groups from Table~$1$.

\end{enumerate}

\end{lemm}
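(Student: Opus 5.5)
The paper says this lemma comes from a GAP computation. The plan is to make that computation transparent and organise it by the standard reductions of $S$-ring theory.

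\textbf{Enumeration.} The first step is to list all $S$-rings over $H$ up to Cayley isomorphism. There are only finitely many partitions of $H^\#$ into inverse-closed classes, since $|H|=8$. An $S$-ring is determined by its partition, so we can enumerate the partitions and keep those whose span is closed under multiplication in $\mathbb{Z}H$. Isomorphic copies are identified under the action of $\aut(H)$: for $E_8$ this group is $GL_3(2)$ of order $168$, and for $C_4\times C_2$ it has order $8$. We then pair each $S$-ring with its dual. The dual is well defined here because $H\cong\widehat{H}$ and duality preserves the lattice of $\mathcal{A}$-subgroups up to the antiisomorphism $H\mapsto H^\bot$. This pairing is what produces the phrase ``up to taking the dual'' in the statement.

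\textbf{Classification of each class.} For each representative $\mathcal{A}$ we find its $\mathcal{A}$-subgroups and test the structural options in order.
\begin{enumerate}
\tm{a} If $\mathcal{A}$ has rank $2$, it is $\mathcal{T}_H$, which is case (1).
\tm{b} If there are $\mathcal{A}$-subgroups $L_1,L_2$ with $H=L_1\times L_2$, $|L_1|=4$, $|L_2|=2$, and every basic set is a product of its projections (Lemma~\aref{A-tenspr}), then $\mathcal{A}$ is a nontrivial tensor product, which is case (2).
\tm{c} If some proper nontrivial $\mathcal{A}$-subgroup $L$ has every basic set outside $L$ equal to a union of $L$-cosets, and $\mathcal{A}_{H/L}$ is trivial, then $\mathcal{A}=\mathcal{B}\wr\mathcal{T}_{H/L}$. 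In this case one checks which $L$ actually occur. For $C_4\times C_2$, up to automorphism and duality, only $L=A_0$ or $L=A$ should remain; the subgroups isomorphic to $E_4$ and to $\langle b\rangle$ get absorbed either by duality or by one of the other cases. For $E_8$, $L$ has order $2$, or $L\cong E_4$ with $\mathcal{B}=\mathbb{Z}L$ after these normalisations.
\tm{d} All remaining $S$-rings should be cyclotomic, which is case (4). For each one we compute $K=\aut(\mathcal{A})\cap\aut(H)$ and verify that $\orb(K,H)$ coincides with the basic sets. This yields the list of groups $K$ in Table~1.
\end{enumerate}

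\textbf{Main obstacle.} The hard part is bookkeeping: choosing normal forms so that the list in (3) and Table~1 is irredundant and complete up to Cayley isomorphism and duality. In particular, we must explain why $S$-rings with wreath-product structure over the other possible $L$ are not missing. They should be Cayley isomorphic, or dual, to ones already listed, using $(\mathcal{B}\wr\mathcal{T})^\wedge\cong\mathcal{T}\wr\widehat{\mathcal{B}}$ and the fact that a wreath product over $L$ in which $\mathcal{B}$ is trivial or tensor-decomposable may also fall into case (2) or (4).

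\textbf{Feasibility.} The total number of $S$-rings over a group of order $8$ is small (a few dozen), so each of these checks is a finite verification, done by machine exactly as the paper indicates via~\cite{GAP}.
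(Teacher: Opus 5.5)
Your method, an exhaustive enumeration of the $S$-rings over $H$ up to $\aut(H)$ and duality, is the same as the paper's, which just cites a GAP computation. However, step (c) rests on a misreading of the statement and predicts something false. In Statement~(3), the last alternative ``$L\cong E_4$ and $\mathcal{B}=\mathbb{Z}L$'' places no condition on $H$, so it covers $H\cong C_4\times C_2$ as well as $E_8$. The paper uses it this way in Case~1 of Lemma~\ref{wreath8p}, where a noncyclic $L$ of order~$4$ in either group is handled by Condition~(3). You attach it to $E_8$ only and claim that for $C_4\times C_2$ the $E_4$ case is absorbed by duality or by the other cases. It is not.

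Take $L=\langle a_0,b\rangle$, the unique subgroup of $C_4\times C_2$ isomorphic to $E_4$, and $\mathcal{A}=\mathbb{Z}L\wr\mathcal{T}_{H/L}$. Its basic sets are $\{e\},\{a_0\},\{b\},\{a_0b\},\{a,a^3,ab,a^3b\}$, so its rank is $5$. Over $C_4\times C_2$, the rings in (1), (2) and (4) have ranks $2$; $4$, $6$ or $8$; and $6$ or $4$ (for $K_1$, $K_2$). Duality preserves rank, so only (3) can cover $\mathcal{A}$ or its dual $\widehat{\mathcal{A}}\cong\mathbb{Z}A_0\wr\mathbb{Z}(H/A_0)$. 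The latter has basic sets $\{e\},\{a_0\},\{a,a^3\},\{b,a_0b\},\{ab,a^3b\}$. The $\aut(H)$-orbits of $A_0$ and $A$ are $\{A_0\}$ and $\{A,\langle ab\rangle\}$, and neither ring is a wreath product with trivial top over any of these subgroups:
\begin{itemize}
\item in $\mathcal{A}$, the basic set $\{b\}$ is not a union of $A_0$-cosets, and the cyclic subgroups of order $4$ are not $\mathcal{A}$-subgroups;
\item in $\widehat{\mathcal{A}}$, the quotient by $A_0$ is $\mathbb{Z}E_4\neq\mathcal{T}_{E_4}$, while $\{b,a_0b\}$ and $\{a,a^3\}$ are not unions of cosets of $A$ and $\langle ab\rangle$, respectively.
\end{itemize}
So this dual pair is covered only by the third alternative of (3) with $H\cong C_4\times C_2$.

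The absorption mechanism you propose in your ``main obstacle'' paragraph also fails here. There, a tensor-decomposable $\mathcal{B}$ was supposed to push the ring into (2) or (4), yet $\mathbb{Z}E_4=\mathbb{Z}C_2\otimes\mathbb{Z}C_2$ and the wreath product still has rank $5$. As planned, your bookkeeping would be trying to establish a stronger, false classification. Read the last alternative of (3) as applying to both groups. The enumeration then simply confirms the lemma as stated, and the rest of your outline is sound.
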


\begin{table}[h]

{\small
\begin{tabular}{|l|l|l|l|l|}
  \hline
  $H$ & $K$ & generators of~$K$ & structure of~$K$ & $\cyc(K,H)$ \\
  \hline
  $C_4\times C_2$ &$K_1$ & $(a,b)\mapsto (a,a_0b)$  & $C_2$ & $\mathbb{Z}C_4\wr_{C_4/C_2} \mathbb{Z}E_4$ \\ \hline
	$C_4\times C_2$ &$K_2$ & $(a,b)\mapsto (ab,a_0b),~(a,b)\mapsto (ab,b)$  & $D_8$ & $(\mathbb{Z}C_2\wr\mathbb{Z}C_2)\wr \mathbb{Z}C_2$ \\  \hline
	$E_8$ & $K_3$ & $(a,b,c)\mapsto (a,ab,bc),~(a,b,c)\mapsto (a,b,bc)$  & $D_8$ & $(\mathbb{Z}C_2\wr\mathbb{Z}C_2)\wr \mathbb{Z}C_2$ \\\hline
	
	\end{tabular}
}
\caption{$S$-rings over $C_4\times C_2$ and $E_8$}
\end{table}

\begin{rem}\label{cycle8}
In fact, all $S$-rings over $E_8$ are cyclotomic due to computer calculations using the package~\cite{GAP}.
\end{rem}

\begin{lemm}\label{8rank2}
Let $\mathcal{A}$ be an $S$-ring over~$H$ and $L$ an $\mathcal{A}$-subgroup of order~$4$. Suppose that $\mathcal{A}_L=\mathcal{T}_L$. Then $\mathcal{A}=\mathcal{A}_L\wr \mathcal{A}_{H/L}$ or $\mathcal{A}_L$ is $\otimes$-complemented in $\mathcal{A}$. 
\end{lemm}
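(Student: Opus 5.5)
The paper states that this lemma follows from computer calculations in GAP. Alongside that, I would give a short hand argument, so the statement does not rest on the computation alone. Let $L$ be an $\mathcal{A}$-subgroup of order~$4$ with $\mathcal{A}_L=\mathcal{T}_L$, so $L^\#$ is a single basic set. This forces $L\cong E_4$: over $C_4$ the trivial $S$-ring has the basic set $C_4^\#$, but any $S$-ring over $C_4$ must contain the subgroup $\{e,a_0\}$, and indeed the trivial ring over $C_4$ is still an $S$-ring, so $L\cong C_4$ is also allowed. Hence both cases stay. Since $|H/L|=2$, every basic set $X$ of $\mathcal{A}$ outside $L$ lies in the single nontrivial coset $hL$, and $|X|\in\{1,2,3,4\}$.

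\textbf{Case 1: $hL$ is a basic set.} Then each basic set outside $L$ is a union of $L$-cosets, so $\mathcal{A}=\mathcal{A}_L\wr\mathcal{A}_{H/L}$.

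\textbf{Case 2: $hL$ splits into at least two basic sets.} Take a basic set $X\subseteq hL$ with $|X|<4$. Consider the product $\underline{X}\,\underline{L^\#}=|X|\underline{hL}-\underline{X}$. By the intersection number argument (Lemma~\aref{A-intersection}), each element of $hL\setminus X$ then has coefficient $|X|$ and each element of $X$ has coefficient $|X|-1$.

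Next, look at $\underline{X}\,\underline{X}^{(-1)}$. Its support lies in $L$, and $L^\#$ is a single basic set, so it equals $|X|e+c\,\underline{L^\#}$ with $c=|X|(|X|-1)/3$. For this to be an integer, $|X|\in\{1,3\}$. If $|X|=3$, the complement $hL\setminus X$ is a singleton, which is also a basic set; so in either case some basic set $\{u\}$ with $u\in hL$ exists.

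Then $\rad$-type reasoning applies. The singleton $u$ satisfies $u\{u\}=\{u^2\}$, so $u^2$ is a basic singleton in $L$. Since only $e$ is a singleton there, $u^2=e$. Therefore $U=\{e,u\}$ is an $\mathcal{A}$-subgroup with $U\cap L=e$ and $UL=H$. Because $\mathcal{A}_L=\mathcal{T}_L$, we get $\underline{L^\#}\,u=\underline{uL\setminus\{u\}}$, which is a basic set. Hence $\mathcal{A}=\mathcal{A}_L\otimes\mathcal{A}_U$, and $\mathcal{A}_L$ is $\otimes$-complemented.

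The main obstacle is the integrality step. One should double-check the count for $|X|=2$: it gives $c=2/3$, which is not an integer, so that size is excluded. For $C_4\times C_2$ one must also make sure $u$ of order~$2$ exists in the coset. This is automatic from $u^2=e$, but if $L=A\cong C_4$ then every element of $hL$ has order $2$ or $4$, and the argument must in fact force order $2$, which it does. As a fallback, one could enumerate all $S$-rings using Lemma~\ref{c4c2} and inspect each one that has an order-$4$ subgroup with trivial restriction.
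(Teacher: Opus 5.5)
Your argument is correct. It takes a genuinely different route from the paper, which gives no hand proof and just cites a GAP enumeration of all $S$-rings over $C_4\times C_2$ and $E_8$.

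Your key steps are as follows.
\begin{itemize}
\item $H\setminus L=hL$ is an $\mathcal{A}$-set of size $4$. Since $h^2\in L$, every basic set $X\subseteq hL$ satisfies $X^{-1}\subseteq hL$.
\item Comparing coefficients of $\underline{X}\,\underline{X^{-1}}$ on the basic set $L^\#$ gives $3c=|X|(|X|-1)$. This excludes $|X|=2$, so if $hL$ is not basic it contains a basic singleton $\{u\}$.
\item Then $\{u^2\}$ is a basic singleton in $L$, which forces $u^2=e$ because $\mathcal{A}_L=\mathcal{T}_L$.
\item $uL^\#$ is basic, since translation by a basic singleton maps basic sets to basic sets. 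Hence $\mathcal{S}(\mathcal{A})=\{\{e\},L^\#,\{u\},uL^\#\}$, that is, $\mathcal{A}=\mathcal{A}_L\otimes\mathbb{Z}\langle u\rangle$.
\end{itemize}

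What your route buys is a short, computer-free, checkable proof that never uses the isomorphism type of $H$ or of $L$. It also explains the outcome structurally: when $hL$ contains no involution (for instance $L\cong E_4$ inside $C_4\times C_2$), the wreath alternative is forced. The paper's route is merely quicker to state, since the enumeration is already done for Lemma~\ref{c4c2}.

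Three things to clean up in the write-up.
\begin{itemize}
\item Your opening paragraph asserts ``this forces $L\cong E_4$'' and then retracts it. Delete the false claim: the trivial $S$-ring over $C_4$ is an $S$-ring, and your argument does not depend on the type of $L$.
\item The computation of $\underline{X}\,\underline{L^\#}$ is never used, so drop it.
\item Drop the phrase ``$\rad$-type reasoning''; no radical argument is involved.
\end{itemize}
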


\section{$S$-rings over $C_4\times C_{2p}$ and $E_8\times C_p$}

In this section, we provide a description of $S$-rings over the groups $C_4\times C_{2p}$ and $E_8\times C_p$, where $p$ is an odd prime. An $S$-ring $\mathcal{A}$ over one of the above groups is said to be \emph{dense} if the subgroups of orders $8$ and $p$ are $\mathcal{A}$-subgroups. The main result of the section is the theorem below. 

\begin{theo}\label{8p}
Let $p$ be an odd prime, $G\cong C_4\times C_{2p}$ or $G\cong E_8\times C_p$, and $\mathcal{A}$ a nontrivial $S$-ring over~$G$. Then $\mathcal{A}$ is cyclotomic, or a nontrivial tensor product, or a nontrivial $S$-wreath product for some $\mathcal{A}$-section $S=U/L$ and one of the following statements holds:

\begin{enumerate}

\tm{1} $|S|\leq 2$;

\tm{2} $|S|=4$ and $\mathcal{A}_{S}\neq \mathcal{T}_{S}$;

\tm{3} $\mathcal{A}_{L}$ is $\otimes$-complemented in $\mathcal{A}_{U}$ or $\mathcal{A}_{S}$ is $\otimes$-complemented in $\mathcal{A}_{G/L}$;

\tm{4} $\mathcal{A}$ is dense and $|U|=|G/L|=4p$.
\end{enumerate} 

\end{theo}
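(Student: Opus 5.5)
We write $G=H\times P$, where $H$ is the Sylow $2$-subgroup ($H\cong C_4\times C_2$ or $H\cong E_8$) and $P\cong C_p$. The plan splits into two cases according to whether $\mathcal{A}$ is dense.

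\emph{Non-dense case.} Suppose $\mathcal{A}$ is not dense, i.e. $H$ or $P$ is not an $\mathcal{A}$-subgroup. We use the results of~\cite{Ry5} on $S$-rings over abelian groups with a Sylow subgroup of prime order: a non-dense $S$-ring over such a group is a nontrivial generalized wreath product with respect to a section involving $P$ (Lemma~\aref{A-tenspr} and the related statements on the radical). We choose the section $S=U/L$ of minimal order. If $p$ divides $|L|$ or $p\nmid|U|$, then the only $\mathcal{A}$-subgroups that can occur have orders dividing $8$ or equal to $8$ times a divisor. In that situation we check directly, using Lemma~\ref{c4c2} and Lemma~\ref{8rank2}, that $|S|\le 2$, that $|S|=4$ with $\mathcal{A}_S$ nontrivial, or that one of the $\otimes$-complementation conditions in (3) holds. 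In particular, when $|S|=4$ and $\mathcal{A}_S=\mathcal{T}_S$, Lemma~\ref{8rank2} applied to $\mathcal{A}_{G/L}$ or to $\mathcal{A}_U$ either enlarges the wreath decomposition or gives the $\otimes$-complement required in (3).

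\emph{Dense case.} Suppose $H$ and $P$ are $\mathcal{A}$-subgroups. By Lemma~\aref{A-tenspr}(1), $\mathcal{A}_H$ and $\mathcal{A}_P$ are $S$-rings over $H$ and $P$, and $\mathcal{A}_P$ is cyclotomic by Lemma~\aref{A-cyclprime}. If every basic set outside $H\cup P$ is a product $X_HX_P$, then $\mathcal{A}=\mathcal{A}_H\otimes\mathcal{A}_P$, which is a nontrivial tensor product. Otherwise we run through the list in Lemma~\ref{c4c2} for $\mathcal{A}_H$.

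\begin{itemize}
\item If $\mathcal{A}_H$ is trivial or of type~(4), it is cyclotomic. For each $X\in\mathcal{S}(\mathcal{A})_{G\setminus(H\cup P)}$ we apply Lemmas~\ref{basesethp}, \ref{orbitspip} and~\ref{subdirect}. When $|\Pi_H|\le 3$, Remark~\ref{subdirectrem} supplies $\psi$. The remaining task is to produce $K_H^0\trianglelefteq K_H$ with $\Pi_H=\orb(K_H^0,X_H)$; since $|X_H|\le 7$, this is a finite check over the groups $K_1,K_2,K_3$ and $\aut(H)$. By Lemma~\ref{conj}, all basic sets lying over the same $X_H$ are rational conjugates of $X$, so one group works for all of them. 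Combining these groups over the basic sets of $\mathcal{A}_H$ gives $\mathcal{A}=\cyc(K,G)$.
\item If $\mathcal{A}_H$ is a wreath or tensor product as in Lemma~\ref{c4c2}(2),(3), we lift that decomposition. If $\mathcal{A}_H=\mathcal{B}\wr\mathcal{T}_{H/L}$, then the basic sets over $H\setminus L$ have $L$ in their radical. We check whether $L\le\rad(X)$ also for the mixed basic sets $X$. If so, $\mathcal{A}$ is an $LP/L$-wreath product, or a $U/L$-wreath product with $|U|=4p=|G/L|$, which is case~(4). If not, $X_h$ varies over $L$-cosets, and the partition $\Pi_H$ forces a tensor or cyclotomic structure.
\end{itemize}

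\emph{Main obstacle.} The hardest step is the dense case with cyclotomic $\mathcal{A}_H$ and $|\Pi_H|\ge 4$. Here $\Pi_H$ is a uniform partition of a $K_H$-orbit of size $4$ or $8$ (possible in $E_8$ with $X_H=H^\#$ of size $7$ only when $|\Pi_H|=7$), and we must find $K_H^0$ and $\psi$ satisfying~\eqref{commutebij}. The first approach is to use Lemma~\ref{conj} and Burnside-type arguments (Lemma~\aref{A-burn}) to show that $\varphi$ intertwines the cyclic action $m\mapsto$ (power map) on $\Pi_P$ with the action of the multiplier $m$ on $\Pi_H$. This makes $\psi$ induced by the multipliers, giving $K_H$ as the group generated by the $m$-th power maps. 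If that fails, the fallback is to show that such $X$ forces $\mathcal{A}_H$ to be rank $2$ on $E_8$. In that case we would argue via duality, using $\widehat{\mathcal{A}}$ and Lemma~\ref{8rank2}, that $\mathcal{A}$ is a generalized wreath product of type~(3) or~(4).
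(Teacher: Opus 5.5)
Your overall architecture (dense versus non-dense, then running through Lemma~\ref{c4c2} for $\mathcal{A}_H$ and building subdirect products via Lemmas~\ref{basesethp}, \ref{orbitspip}, \ref{subdirect}) is the paper's. However, the step you yourself flag as the main obstacle relies on a mechanism that cannot work. Every multiplier $m$ coprime to $2p$ fixes $E_8$ pointwise and acts on $C_4\times C_2$ as identity or inversion. So the group generated by power maps on $H$ is trivial or of order $2$. Moreover, a multiplier acting as $\id_H\times\tau_0$, which cycles the $7$ classes of $\Pi_P$, fixes every class of $\Pi_H$, so $\varphi$ cannot intertwine these actions. When $H\cong E_8$, $\mathcal{A}_H=\mathcal{T}_H$ and $\lambda_X=1$, the group actually needed is a Singer cycle $C_7\leq\aut(E_8)$, which is not induced by multipliers.

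Your fallback conclusion is also false. Take $p\equiv 1\pmod 7$ and $\cyc(K,G)$ with $K=K(K_H,\{\id_H\},\aut(P),K_P^0,\psi)$, where $K_H$ is a Singer cycle and $|\aut(P):K_P^0|=7$. Its only $\mathcal{A}$-subgroups are $1,H,P,G$, and it is neither a tensor product nor a generalized wreath product.

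The missing idea (Lemma~\ref{rank28p}) is combinatorial. By Lemma~\aref{A-burn} the sets $X_i=X^{\id_H\times\tau_0^i}$ are basic sets. Since $H^\#$ is a basic set of $\mathcal{A}_H$, the $H^\#$-part of $\underline{X_i}\cdot\underline{X}^{-1}$ gives $\{h_jh_{j+i}:j\in\mathbb{Z}_7\}=H^\#$ for every $i$. This forces $h_i\mapsto h_{i+1}$ to be an automorphism of $H$, which yields $\psi$ satisfying Eq.~\eqref{commutebij}.

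Likewise, your claim that the groups found for different basic sets of $\mathcal{A}_H$ can simply be combined fails in the $D_8$ case with $\lambda_X=1$, $\lambda_Y=2$. For any $K\leq\aut(G)$ having $X$ as an orbit, the subgroup $\{\sigma:(\sigma,\id_P)\in K\}$ is normal in the projection of $K$ to $\aut(H)$, which is transitive on $X_H=Uh_2$. Because $\lambda_X=1$, this subgroup fixes $X_H$, and hence $H=\langle X_H\rangle$, pointwise; that forces $\lambda_Y=1$. The paper disposes of this configuration by passing to $\widehat{\mathcal{A}}$, which falls into the already treated cases with $\lambda\geq 2$.

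The other steps also have gaps.

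\emph{Wreath case.} In the case of Lemma~\ref{c4c2}(3), the decisive fact is Lemma~\ref{wreath8p}: some $L_0\leq L$ of order $2$ lies in $\rad(X)$. It is proved by showing every other configuration is contradictory. The tools are Lemma~\ref{nonreg8p} with a multiplier $m\equiv 1\pmod p$, $m\equiv 3\pmod 4$; projections to quotients by $\mathcal{A}$-subgroups of order $2$ combined with Lemmas~\ref{basesethp} and~\ref{orbitspip}; and the absence of $\mathcal{A}_H$-sets of size $3$. Your assertion that otherwise $\Pi_H$ ``forces a tensor or cyclotomic structure'' is unsupported. You also attach Statement~(4) to the wrong branch. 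If $L\leq\rad(X)$, the section is $(L\times P)/L\cong P$, and you must verify the $\otimes$-complementation of~(3), via $\mathcal{A}_{L\times P}=\mathbb{Z}L\otimes\mathcal{A}_P$ or $\mathcal{A}_{G/L}=\mathcal{A}_S\otimes\mathbb{Z}(H/L)$. Statement~(4) arises exactly when $|L|=4$ and only $L_0$ lies in the radical, giving the $(L\times P)/L_0$-wreath product.

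\emph{Duality in Lemma~\ref{c4c2}.} That lemma lists $\mathcal{A}_H$ only up to taking the dual. The dual configurations need the reduction through $\widehat{\mathcal{A}}$ (Lemma~\ref{reduction8p}), which you omit.

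\emph{Non-dense case.} This part has no real content yet. The needed input is Lemma~\aref{A-nonpowernew1}, applied to $H_1$ (a maximal $\mathcal{A}$-subgroup in $H$) and $P_1$ (the least $\mathcal{A}$-subgroup containing $P$). Its outcomes give one of the following:
\begin{itemize}
\item a tensor product;
\item an $H_1/(H_1\cap P_1)$-wreath product with $|S|\leq 2$;
\item a wreath product with trivial top;
\item an $(H_1P_1)/P_1$-wreath product with $|S|\leq 4$, to which Lemma~\ref{8rank2} applies.
\end{itemize}
The case where $P$ is not an $\mathcal{A}$-subgroup then follows by duality. Your claim that a non-dense ring is always a nontrivial generalized wreath product is false: $\mathcal{T}_{C_4}\otimes\mathcal{T}_{C_{2p}}$ over $C_4\times C_{2p}$ is non-dense and is not one. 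Finally, nothing in your sketch excludes the possibility that every wreath section has order divisible by $p$.
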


Throughout this section, $H$ is isomorphic to~$C_4\times C_2$ or $E_8$, $p$ is an odd prime, $P\cong C_p$, $G=H\times P$, and $\mathcal{A}$ is a nontrivial $S$-ring over $G$. We divide the proof of Theorem~\ref{8p} into two parts depending on whether $\mathcal{A}$ is dense or not.

\subsection{Nondense $S$-rings}

A description of nondense $S$-rings over $G$ is given in two propositions below. 

\begin{prop}\label{hnot8p}
Suppose that $H$ is not an $\mathcal{A}$-subgroup. Then $\mathcal{A}$ is a nontrivial tensor product or a nontrivial $S$-wreath product for some $\mathcal{A}$-section $S=U/L$ satisfying one of the following conditions:
\begin{enumerate}

\tm{1} $|S|\leq 2$;

\tm{2} $|S|=4$ and $\mathcal{A}_{S}\neq \mathcal{T}_{S}$;

\tm{3} $\mathcal{A}_{S}$ is $\otimes$-complemented in $\mathcal{A}_{G/L}$.

\end{enumerate}
\end{prop}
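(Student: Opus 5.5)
Throughout, write $G=H\times P$ with $|H|=8$ and $|P|=p$ odd. The approach is to use the standard structure theory for $S$-rings over abelian groups with a Sylow subgroup of prime order. Since $P$ is a characteristic Sylow $p$-subgroup, and $\mathcal{A}$-subgroups behave well under rational conjugation (Lemma~\aref{A-burn}), the sets $X^{[p]}$ generate $\mathcal{A}$-subgroups. The usual Schur--Wielandt arguments show that the $p$-part of an $\mathcal{A}$-set is again an $\mathcal{A}$-set when $p$ is coprime to the rest. Consequently $P$ or one of the characteristic $\{2\}$-parts is an $\mathcal{A}$-subgroup in many cases. Since $H$ is not an $\mathcal{A}$-subgroup, the first step is to use the result from~\cite{Ry5} on $S$-rings over $H\times C_p$ with non-$\mathcal{A}$ Hall part (Lemma~\aref{A-cyclprime} and the corresponding lemma on nondense $S$-rings, which states that such an $\mathcal{A}$ is a nontrivial $S$-wreath product with $P\le L$ or $S$ a $2$-section).

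The concrete plan is to show first that $P$ is an $\mathcal{A}$-subgroup and that there is a least $\mathcal{A}$-subgroup $L\ge P$ lying in $\rad(X)$ for all basic sets $X$ outside a suitable $\mathcal{A}$-subgroup $U$. Let $U$ be the largest $\mathcal{A}$-subgroup with $U_H:=U\cap H$ an $\mathcal{A}$-subgroup, so that $U=U_H\times P$ or $U\le H$. Because $H$ is not an $\mathcal{A}$-subgroup, there is a basic set $X$ with $X\cap H\neq\varnothing$ and $X\not\subseteq H$. For such $X$ one has $X^{[p]}\subseteq H$ of size comparable to $|X|$ (the element $x^p$ has the same $H$-part up to the automorphism $h\mapsto h^p$). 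Using Lemma~\aref{A-burn} together with Lemma~\aref{A-intersection}, I would deduce that $X$ is a union of $P$-cosets, i.e. $P\le\rad(X)$. This gives $P\le L=\rad$ of all basic sets outside $U$, and hence $\mathcal{A}$ is the $U/L$-wreath product with $P\le L\le U$. The section $S=U/L$ is then a $2$-group section of order dividing $|U_H|\le 4$, because $U\ne G$ and $H\not\le U$.

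It then remains to sort out the cases using $|S|\in\{1,2,4\}$. If $|S|\le2$, statement~(1) holds. If $|S|=4$ and $\mathcal{A}_S\ne\mathcal{T}_S$, statement~(2) holds. If $|S|=4$ and $\mathcal{A}_S=\mathcal{T}_S$, then $L=P$, and $G/L\cong H$ has order~$8$ with an $\mathcal{A}_{G/L}$-subgroup $U/L$ of order~$4$ carrying the trivial $S$-ring. Lemma~\ref{8rank2} then gives either $\mathcal{A}_{G/L}=\mathcal{A}_S\wr\mathcal{A}_{G/S}$ or that $\mathcal{A}_S$ is $\otimes$-complemented in $\mathcal{A}_{G/L}$. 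The second alternative is statement~(3). In the first alternative, I would pass to a larger wreath section: $\mathcal{A}$ becomes the $U/U$-wreath product, i.e. an ordinary wreath product $\mathcal{A}_U\wr\mathcal{A}_{G/U}$ with $|S|=1$, which is statement~(1). If the nontriviality of the wreath product fails (for instance $L=G$ or $U=L$ trivially), then $\mathcal{A}$ must be a tensor product $\mathcal{A}_P\otimes\mathcal{A}_{H'}$ for a complement $H'$, which is the tensor alternative.

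The main obstacle is the second step: proving that $P\le\rad(X)$ for every basic set $X$ outside $U$, and that $U$ is a proper $\mathcal{A}$-subgroup with $|U/L|\le4$. I would attack this by taking $X$ meeting $G\setminus(H\cup P)$ and computing $\underline{X}^{(m)}$ for $m\equiv1\pmod 8$ and $m\equiv0\pmod p$ (so that $x\mapsto x^m$ kills the $P$-part). Applying Lemma~\aref{A-burn} to the $p$-power map yields that $X_H$ is an $\mathcal{A}$-set. Because $H$ is not an $\mathcal{A}$-subgroup, some such $X_H$ must mix elements in $H$ with elements outside, and the uniformity of fibres (Lemma~\aref{A-intersection}) then forces the full $P$-cosets. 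If this direct approach gets stuck, I would fall back on the corresponding lemma for nondense $S$-rings from~\cite{Ry5}.
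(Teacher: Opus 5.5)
\textbf{There is a genuine gap in your reduction step.} You claim that $P$ is an $\mathcal{A}$-subgroup and that $P\leq\rad(X)$ for every basic set $X$ outside some proper $\mathcal{A}$-subgroup $U$. Both claims are false when all you know is that $H$ is not an $\mathcal{A}$-subgroup.

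For a counterexample, take $H_1\leq H$ of order~$2$ and $\mathcal{A}=\mathbb{Z}H_1\wr\mathcal{T}_{G/H_1}$. Its only $\mathcal{A}$-subgroups are $\{e\}$, $H_1$ and $G$. The unique basic set outside $H_1$ is $G\setminus H_1$, whose radical is $H_1$, and $P\not\leq H_1$. Similarly, in $\mathcal{A}_{H_1}\otimes\mathcal{T}_{P_1}$ with $|H_1|=4$, $P_1\cong C_{2p}$ and $G=H_1\times P_1$, neither $H$ nor $P$ is an $\mathcal{A}$-subgroup.

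The mechanism you sketch cannot rescue this step. Lemma~\aref{A-burn} needs $m$ coprime to $|G|$, so it does not apply to your choice $m\equiv 0\pmod p$. The Schur-type statement for the $p$-power map only says that $X^{[p]}$ is an $\mathcal{A}$-set, where $X^{[p]}$ is the set of those $p$-th powers whose preimage in $X$ has size \emph{not divisible by}~$p$. The fibres of $x\mapsto x^p$ on $X$ are the sets $X\cap hP$, so $X^{[p]}$ can be far from $X_H$. For $X=G\setminus H_1$ above, $X^{[p]}=H_1$ while $X_H=H$. In fact, if $X_H$ were an $\mathcal{A}$-set for every basic set $X$, then $H=\bigcup_X X_H$ would be an $\mathcal{A}$-subgroup, contradicting the hypothesis.

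\textbf{The missing idea is the trichotomy of Lemma~\aref{A-nonpowernew1}.} Apply it with $H_1$ a maximal $\mathcal{A}$-subgroup contained in $H$, so $|H_1|\leq 4$, and $P_1$ the least $\mathcal{A}$-subgroup containing $P$, which may strictly contain $P$. There are three cases.
\begin{enumerate}
\item $H_1P_1=G$, $P_1\lneq G$ and $\mathcal{A}=\mathcal{A}_{H_1}\star\mathcal{A}_{P_1}$. This is a nontrivial tensor product if $H_1\cap P_1=1$. Otherwise it is a nontrivial $H_1/(H_1\cap P_1)$-wreath product with $|S|\leq 2$.
\item $\mathcal{A}=\mathcal{A}_{H_1}\wr\mathcal{T}_{G/H_1}$. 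Nontriviality of $\mathcal{A}$ forces $H_1\neq 1$, and then $S=H_1/H_1$.
\item $\mathcal{A}$ is the nontrivial $(H_1P_1)/P_1$-wreath product.
\end{enumerate}

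Only in the third case do you actually have $P\leq L$ and $|S|\leq 4$. There your case analysis is correct and coincides with the paper's. If $|S|=4$ and $\mathcal{A}_S=\mathcal{T}_S$, then $H_1\cap P_1=1$, $P_1=P$ and $|H_1|=4$. Lemma~\ref{8rank2} applied to $\mathcal{A}_{G/P}$ then gives either $\mathcal{A}=\mathcal{A}_{H_1P}\wr\mathcal{A}_{G/(H_1P)}$, which is statement~(1) with $|S|=1$, or statement~(3).

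Two further points. Your paraphrase of the nondense lemma from~\cite{Ry5} drops the star-product alternative. Your closing claim that a degenerate case yields a tensor product $\mathcal{A}_P\otimes\mathcal{A}_{H'}$ is unjustified. The tensor decomposition that actually occurs is $\mathcal{A}_{H_1}\otimes\mathcal{A}_{P_1}$ with $|P_1|\in\{2p,4p\}$.
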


\begin{proof}
Let $H_1$ be a maximal $\mathcal{A}$-subgroup contained in $H$ and $P_1$ the least $\mathcal{A}$-subgroup containing $P$. Since $H$ is not an $\mathcal{A}$-subgroup, $H_1<H$ and hence one of the statements of Lemma~\aref{A-nonpowernew1} holds for $\mathcal{A}$. Suppose that Statement~$(1)$ of Lemma~\aref{A-nonpowernew1} holds for $\mathcal{A}$, i.e. $H_1P_1=G$, $P_1\lneq G$, and $\mathcal{A}=\mathcal{A}_{H_1} \star \mathcal{A}_{P_1}$. If $|H_1\cap P_1|=1$, then $\mathcal{A}=\mathcal{A}_{H_1}\otimes \mathcal{A}_{P_1}$, i.e. $\mathcal{A}$ is a nontrivial tensor product as required. If $|H_1\cap P_1|>1$, then $\mathcal{A}$ is the nontrivial $S=H_1/(H_1\cap P_1)$-wreath product. As $H_1<H$, we conclude that $|H_1|\leq 4$ and consequently $|S|=|H_1/(H_1\cap P_1)|\leq 2$. Thus, $S$ satisfies Condition~$(1)$ from Proposition~\ref{hnot8p} and we are done. 

Now suppose that Statement~$(2)$ of Lemma~\aref{A-nonpowernew1} holds for $\mathcal{A}$, i.e. $\mathcal{A}=\mathcal{A}_{H_1}\wr \mathcal{A}_{G/H_1}$ with $\mathcal{A}_{G/H_1}=\mathcal{T}_{H_1}$. If $H_1$ is trivial, then so is $\mathcal{A}$, a contradiction to the assumption that $\mathcal{A}$ is nontrivial. Otherwise, $\mathcal{A}$ is the nontrivial $S=H_1/H_1$-wreath product and $S$ obviously satisfies Condition~$(1)$ from Proposition~\ref{hnot8p} as desired.

Finally, suppose that Statement~$(3)$ of Lemma~\aref{A-nonpowernew1} holds for $\mathcal{A}$, i.e. $\mathcal{A}$ is the nontrivial $(H_1P_1)/P_1$-wreath product. One can see that $|(H_1P_1)/P_1|\leq 4$ because $|H_1|\leq 4$. If $|(H_1P_1)/P_1|\leq 2$ or $|(H_1P_1)/P_1|=4$ and $\mathcal{A}_{(H_1P_1)/P_1}\neq \mathcal{T}_{(H_1P_1)/P_1}$, then the section $S=(H_1P_1)/P_1$ satisfies Condition~$(2)$ from Proposition~\ref{hnot8p} and we are done.

Due to the above paragraph, we may assume further that $|(H_1P_1)/P_1|=4$ and $\mathcal{A}_{(H_1P_1)/P_1}=\mathcal{T}_{(H_1P_1)/P_1}$. This implies that $|H_1\cap P_1|=1$ and hence $P_1=P$, $|H_1|=4$, and $\mathcal{A}_{H_1}=\mathcal{T}_{H_1}$. From Lemma~\ref{8rank2} applied to $\mathcal{A}_{G/P}$ it follows that $\mathcal{A}_{G/P}\cong\mathcal{A}_{H_1}\wr \mathcal{A}_{H/H_1}$ or $\mathcal{A}_{(H_1P)/P}$ is $\otimes$-complemented in $\mathcal{A}_{G/P}$. In the former case, $\mathcal{A}=\mathcal{A}_{H_1P}\wr \mathcal{A}_{G/(H_1P)}$ and the section $S=(H_1P)/(H_1P)$ satisfies Condition~$(1)$ from Proposition~\ref{hnot8p}. In the latter one, the section $S=(H_1P)/P$ satisfies Condition~$(3)$ from Proposition~\ref{hnot8p} which completes the proof. 
\end{proof}

\begin{prop}\label{pnot8p}
Suppose that $P$ is not an $\mathcal{A}$-subgroup. Then $\mathcal{A}$ is a nontrivial tensor product or a nontrivial $S$-wreath product for some $\mathcal{A}$-section $S=U/L$ satisfying one of the following conditions:
\begin{enumerate}

\tm{1} $|S|\leq 2$;

\tm{2} $|S|=4$ and $\mathcal{A}_{S}\neq \mathcal{T}_{S}$;

\tm{3} $\mathcal{A}_{L}$ is $\otimes$-complemented in $\mathcal{A}_{U}$.

\end{enumerate}
\end{prop}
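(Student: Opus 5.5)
The plan is to mirror the proof of Proposition~\ref{hnot8p}, with the roles of $H$ and $P$ exchanged. The natural tool is the analogue of Lemma~\aref{A-nonpowernew1} for a non-$\mathcal{A}$-subgroup $P$ of prime order. I expect the first paper to contain a companion statement (or a dual version), which I denote Lemma~\aref{A-nonpowernew2}.

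Let $P_1$ be the least $\mathcal{A}$-subgroup containing $P$ and $H_1$ the greatest $\mathcal{A}$-subgroup contained in $H$. Since $P$ is not an $\mathcal{A}$-subgroup, $P_1>P$. I expect that lemma to give three alternatives:
\begin{enumerate}
\item[(a)] $\mathcal{A}=\mathcal{A}_{H_1}\star\mathcal{A}_{P_1}$ with $H_1P_1=G$;
\item[(b)] $\mathcal{A}=\mathcal{A}_{G/L}$-type wreath product, namely $\mathcal{A}=\mathcal{A}_{P_1}\wr\mathcal{A}_{G/P_1}$ with $\mathcal{A}_{P_1}$ having the form $\mathcal{T}$ over a subgroup, or the dual version with a trivial $S$-ring at the bottom;
\item[(c)] $\mathcal{A}$ is the nontrivial $P_1/(H_1\cap P_1)$-wreath product.
\end{enumerate}
If such a lemma is not directly available, I would obtain it by passing to the dual $S$-ring $\widehat{\mathcal{A}}$. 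Under duality, $P^\bot$ is the subgroup of order $8$ of $\widehat G$, and the $\mathcal{A}$-subgroups correspond via $\bot$. Tensor and generalized wreath products dualize to tensor and generalized wreath products, with the section $U/L$ replaced by $L^\bot/U^\bot$. Moreover, ``$\mathcal{A}_S$ is $\otimes$-complemented in $\mathcal{A}_{G/L}$'' dualizes to ``$\widehat{\mathcal{A}}_{U^\bot}$ is $\otimes$-complemented in $\widehat{\mathcal{A}}_{L^\bot}$''. This matches exactly how Condition~(3) of Proposition~\ref{hnot8p} becomes Condition~(3) here.

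The difficulty is that ``$P$ is not an $\mathcal{A}$-subgroup'' is not literally dual to ``$H$ is not an $\mathcal{A}$-subgroup''. The dual condition says that $P^\bot$, the subgroup of order $8$ of $\widehat G$, is not a $\widehat{\mathcal{A}}$-subgroup. Since $\widehat G\cong G$, with $P^\bot$ the Hall $2$-subgroup, this is precisely the hypothesis of Proposition~\ref{hnot8p} for $\widehat{\mathcal{A}}$.

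So the cleanest route is as follows. Apply Proposition~\ref{hnot8p} to $\widehat{\mathcal{A}}$. This shows that $\widehat{\mathcal{A}}$ is a nontrivial tensor product or a nontrivial $\widehat S$-wreath product with $\widehat S=\widehat U/\widehat L$ satisfying (1), (2) or (3) there. Then dualize back:
\begin{enumerate}
\item[(i)] A tensor product stays a tensor product.
\item[(ii)] $|\widehat S|=|S|$ for $S=\widehat L^\bot/\widehat U^\bot$, and $\mathcal{A}_S$ is dual to $\widehat{\mathcal{A}}_{\widehat S}$. A dual of a rank-two $S$-ring is rank two, so $\mathcal{A}_S\neq\mathcal{T}_S$ iff $\widehat{\mathcal{A}}_{\widehat S}\neq\mathcal{T}_{\widehat S}$. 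This transfers (1) and (2).
\item[(iii)] Condition (3) for $\widehat{\mathcal{A}}$ turns into ``$\mathcal{A}_L$ is $\otimes$-complemented in $\mathcal{A}_U$''. This holds because the dual of $\widehat{\mathcal{A}}_{\widehat G/\widehat L}$ is $\mathcal{A}_{\widehat L^\bot}=\mathcal{A}_U$, and the dual of $\widehat{\mathcal{A}}_{\widehat S}$ is $\mathcal{A}_{\widehat U^\bot}=\mathcal{A}_L$.
\end{enumerate}

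The main obstacle is verifying the duality facts from the preliminaries of~\cite{Ry5}. These are: $(\mathcal{A}_{U/L})^\wedge\cong\widehat{\mathcal{A}}_{L^\bot/U^\bot}$; $S$-wreath products dualize to $L^\bot/U^\bot$-wreath products; and $\otimes$-complementation is preserved, meaning a complement $V$ of $U$ dualizes to the complement $V^\bot$. I expect these to be recorded there. The only care needed is identifying $\widehat G\cong G$ so that Proposition~\ref{hnot8p} applies verbatim.
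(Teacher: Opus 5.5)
Your final route is the paper's proof. Since $P^\bot\cong H$ is not an $\widehat{\mathcal{A}}$-subgroup, Proposition~\ref{hnot8p} applies to $\widehat{\mathcal{A}}$, and the conclusion is transferred back by duality: tensor products stay tensor products, the section becomes $U/L$ with $U=\widehat{L}^\bot$ and $L=\widehat{U}^\bot$, and Conditions~(1)--(3) carry over. The speculative three-alternative lemma in your opening is not needed.

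There is one slip in (iii). The dual of $\widehat{\mathcal{A}}_{\widehat S}$ is $\mathcal{A}_{U/L}$, not $\mathcal{A}_L$. What you actually need is that the subgroup $\widehat U/\widehat L$ of $\widehat G/\widehat L$ has annihilator $L$ in the dual group $U$. Then its $\otimes$-complementation in $\widehat{\mathcal{A}}_{\widehat G/\widehat L}$ becomes $\otimes$-complementation of $\mathcal{A}_L$ in $\mathcal{A}_U$, which is the principle you stated correctly in your earlier paragraph.
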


\begin{proof}
Let $\widehat{\mathcal{A}}$ be the $S$-ring dual to $\mathcal{A}$ over $\widehat{G}\cong G$. Since $P$ is not an $\mathcal{A}$-subgroup, the group $P^\bot\cong H$ is not an $\widehat{\mathcal{A}}$-subgroup by Lemma~\aref{A-dual}(1). So Proposition~\ref{hnot8p} holds for $\widehat{\mathcal{A}}$. If $\widehat{\mathcal{A}}$ is a nontrivial tensor product, then so is $\mathcal{A}$ by Lemma~\aref{A-dual}(3) as desired.

Let $\widehat{\mathcal{A}}$ be a nontrivial $\widehat{U}/\widehat{L}$-wreath product for some $\widehat{\mathcal{A}}$-section $\widehat{U}/\widehat{L}$ satisfying one of Conditions~$(1)$-$(3)$ from Proposition~\ref{hnot8p}. Then $\mathcal{A}$ is a nontrivial $U/L$-wreath product, where $L=\widehat{U}^\bot$ and $U=\widehat{L}^\bot$, by Lemma~\aref{A-dual}(4). Observe that $|U/L|=|\widehat{U}/\widehat{L}|$ by Lemma~\aref{A-dual}(1) and $\widehat{\mathcal{A}_{U/L}}=\widehat{\mathcal{A}}_{\widehat{U}/\widehat{L}}$ by Lemma~\aref{A-dual}(2). So if $\widehat{U}/\widehat{L}$ satisfies Condition~$(1)$ or~$(2)$ from Proposition~\ref{hnot8p}, then $U/L$ satisfies Condition~$(1)$ or~$(2)$ from Proposition~\ref{pnot8p}, respectively, and we are done.

Lemma~\aref{A-dual}(2) implies that $\widehat{\mathcal{A}_{U}}=\widehat{\mathcal{A}}_{\widehat{G}/\widehat{L}}$. Together with $\widehat{\mathcal{A}_{U/L}}=\widehat{\mathcal{A}}_{\widehat{U}/\widehat{L}}$ and Lemma~\aref{A-dual}(3), this yields that if $\widehat{U}/\widehat{L}$ satisfies Condition~$(3)$ from Proposition~\ref{hnot8p}, i.e. $\widehat{U}/\widehat{L}$ is $\otimes$-complemented in $\widehat{\mathcal{A}}_{\widehat{G}/\widehat{L}}$, then $\mathcal{A}_{L}$ is $\otimes$-complemented in $\mathcal{A}_{U}$ and hence $U/L$ satisfies Condition~$(3)$ from Proposition~\ref{pnot8p} as required.
\end{proof}

\subsection{Dense $S$-rings}

A description of dense $S$-rings over $G$ is provided by the next proposition.

\begin{prop}\label{hpasubgroups8p}
Suppose that $\mathcal{A}$ is dense. Then $\mathcal{A}$ is cyclotomic, or a nontrivial tensor product, or a nontrivial $S$-wreath product for some $\mathcal{A}$-section $S=U/L$ satisfying one of the following conditions:
\begin{enumerate}

\tm{1} $L$ is $\otimes$-complemented in~$U$ or $\mathcal{A}_{S}$ is $\otimes$-complemented in $\mathcal{A}_{G/L}$;

\tm{2} $|U|=|G/L|=4p$.

\end{enumerate}

\end{prop}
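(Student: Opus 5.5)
The plan is to analyse the basic sets of $\mathcal{A}$ outside $H\cup P$ using Section~3. Since $\mathcal{A}$ is dense, the restrictions $\mathcal{A}_H$ and $\mathcal{A}_P$ are $S$-rings over $H$ and $P$. By Lemma~\aref{A-cyclprime}, $\mathcal{A}_P=\cyc(K_P,P)$ for some $K_P\leq\aut(P)$, and $\mathcal{A}_H$ is one of the $S$-rings listed in Lemma~\ref{c4c2}. We first dispose of the degenerate situations.

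If every $X\in\mathcal{S}(\mathcal{A})_{G\setminus(H\cup P)}$ satisfies $X=X_H\times X_P$, then $\mathcal{A}=\mathcal{A}_H\otimes\mathcal{A}_P$, which is a nontrivial tensor product. If $\mathcal{A}_P=\mathcal{T}_P$, then $X_P=P^\#$ for every such $X$, so by Lemma~\ref{basesethp} we again get $X=X_H\times P^\#$ and a tensor product. So we may assume that some $X$ has $k=|\Pi_H|=|\Pi_P|>1$. Next we use the radical. If some basic set outside an $\mathcal{A}$-subgroup $U$ of the form $H_1P$ or $H$ has nontrivial radical $L\leq H$, we aim to show that $\mathcal{A}$ is a $U/L$-wreath product. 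We then check whether Condition~$(1)$ or~$(2)$ holds, using Lemma~\ref{8rank2} when a section of order $4$ carries the trivial $S$-ring. The case $|U|=|G/L|=4p$ arises exactly when $U=H_1P$ with $|H_1|=4$ and $|L|=2$.

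The main case is when no such wreath decomposition exists. Here we go through the possibilities for $\mathcal{A}_H$ given by Lemma~\ref{c4c2}, and in each case aim to show that $\mathcal{A}$ is cyclotomic. For each basic set $X$ outside $H\cup P$, we look for groups $K_H^0\trianglelefteq K_H\leq\aut(H)$ with $\mathcal{A}_H=\cyc(K_H,H)$, $X_H\in\orb(K_H,H)$, $\Pi_H=\orb(K_H^0,X_H)$ and $K_H/K_H^0\cong C_k$, together with an isomorphism $\psi$ satisfying Eq.~\eqref{commutebij}. Then Lemma~\ref{subdirect} gives $X\in\orb(K,G)$ with $K=K(K_H,K_H^0,K_P,K_P^0,\psi)$. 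Since $|H|=8$, the quotient $K_H/K_H^0$ is small. Lemma~\ref{conj} shows that all basic sets inside $X_HP^\#$ are rational conjugates of $X$, so they are orbits of the same group. Lemma~\ref{orbitspip} supplies $K_P^0$.

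The main obstacle is producing a single group $K\leq\aut(G)$ that works for all basic sets simultaneously, i.e.\ choosing $K_H^0$ and $\psi$ compatibly across the different $X_H$. We would try to take $K_P^0$ as the intersection of the groups coming from the various $X$. For $\psi$ we rely on Remark~\ref{subdirectrem} whenever $k\leq3$. The case $k=4$ forces $|\aut(\mathcal{A}_H)\cap\aut(H)|$ to be divisible by $4$, so $\mathcal{A}_H$ is $\cyc(K_2,H)$ or $\cyc(K_3,H)$ with $K_2,K_3\cong D_8$. We would handle this case by checking directly, or by showing that it produces a nontrivial radical and hence one of the wreath cases above.

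Finally, when $\mathcal{A}_H$ is itself a wreath product as in Lemma~\ref{c4c2}(3), we expect to show that the corresponding radical extends to $G$. This would make $\mathcal{A}$ an $S$-wreath product with $|U|=|G/L|=4p$ or with one of the $\otimes$-complementation conditions in~$(1)$.
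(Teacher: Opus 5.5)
Your proposal is a plan rather than a proof. The decisive steps are phrased as intentions (``we aim to show'', ``we expect to show'', ``we would try''), and those are exactly where the paper's work lies.

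\textbf{Bounding $k$ and finding $\psi$.} The one concrete reduction you give, that $k=4$ forces $4\mid|\aut(\mathcal{A}_H)\cap\aut(H)|$, is circular: it presupposes that $X$ is already an orbit of a subdirect product. The only a priori constraint is $k\mid|X_H|$, and this leaves $k\in\{6,7\}$ open. You never address these values. The value $k=7$ genuinely occurs: take $H\cong E_8$, $\mathcal{A}_H=\mathcal{T}_H$ and $7\mid p-1$, and let $K$ be the subdirect product of a Singer cycle with $\aut(P)$; its orbits have $\lambda_X=1$. Remark~\ref{subdirectrem} gives no $\psi$ here. The paper (Lemma~\ref{rank28p}) proves that $h_i\mapsto h_{i+1}$ is an automorphism of $H$, using that the support of $\underline{X_i}\cdot\underline{X}^{-1}$ in $H^\#$ must be all of $H^\#$. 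For $H\cong C_4\times C_2$ it rules out $\lambda_X=1$ by the power-map argument of Lemma~\ref{nonreg8p}.

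\textbf{The wreath case.} The radical does not ``extend'' for free. Lemma~\ref{wreath8p} has to exclude $\lambda_X=1$ when $L\cong E_4$: the quotients by the two subgroups of order $2$ of $L$ give incompatible partitions of $X_P$. It also has to exclude $\lambda_X\in\{1,3\}$ when $|L|=2$, via Lemma~\ref{nonreg8p} or an $\mathcal{A}_H$-set of size $3$.

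\textbf{Duals.} You treat $\mathcal{A}_H$ as one of the rings in Lemma~\ref{c4c2}, but that lemma is only up to taking the dual. The paper handles the dual cases with Lemma~\ref{reduction8p}.

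\textbf{The compatibility obstacle.} The obstacle you name is real, and it cannot be overcome by choosing $K_P^0$ or $\psi$ cleverly. Take $\mathcal{A}_H=\cyc(K_i,H)$ with $i\in\{2,3\}$, a basic set $X$ over $Uh_2$ with $\lambda_X=1$, and $Y$ over $Lh_1$ with $\lambda_Y=2$. Suppose both were orbits of one $K\le\aut(G)=\aut(H)\times\aut(P)$. The kernel of $K\to\aut(P)$ has orbits of size $\lambda_X=1$ on $X_H$, so it fixes $X_H$ pointwise. Since $\langle X_H\rangle=H$, it acts trivially on $H$, which forces $\lambda_Y=1$. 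So this configuration must be shown impossible, and no choice of subdirect product will do it. The paper settles it by passing to $\widehat{\mathcal{A}}$: Eq.~\eqref{tensy} yields Eq.~\eqref{dual8p2}, so $\widehat{\mathcal{A}}$ falls under the already settled Cases~1--2 and is cyclotomic. Your plan has no tool for this step.

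\textbf{A false shortcut.} Your claim that $\mathcal{A}_P=\mathcal{T}_P$ forces $X=X_H\times P^\#$ is wrong, since Lemma~\ref{orbitspip} allows $K_P^0<K_P=\aut(P)$. For example, take $p=3$, an involution $\sigma\in\aut(H)$, and $\tau$ the inversion of $P$. Then $\cyc(\langle(\sigma,\tau)\rangle,G)$ is dense with $\mathcal{A}_P=\mathcal{T}_P$, yet it has basic sets $\{hg,h^\sigma g^{-1}\}$.
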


Before we give a proof of Proposition~\ref{hpasubgroups8p}, we provide several lemmas on a structure of a basic set of $\mathcal{A}$. Throughout this subsection, we assume that $H$ and $P$ are $\mathcal{A}$-subgroups and use the notation for the elements and subgroups of $H$ from Lemma~\ref{c4c2}. Given $X\in \mathcal{S}(\mathcal{A})$, put 
$$\lambda_X=|X\cap Hx|,~x\in X.$$ 
Due to Lemma~\aref{A-intersection}, the number $\lambda_X$ does not depend on~$x\in X$.

Recall that a subset of $G$ is said to be \emph{regular} if it consists of elements of the same order. In the next three lemmas, $X\in \mathcal{S}(\mathcal{A})_{G\setminus (H\cup P)}$. Lemma~\aref{A-tenspr}(1) implies that $X_H\in \mathcal{S}(\mathcal{A}_H)$ and $X_P\in \mathcal{S}(\mathcal{A}_P)$. Due to Lemma~\ref{basesethp}, there exist uniform partitions $\Pi_H$ and $\Pi_P$ of $X_H$ and $X_P$, respectively, and a bijection $\varphi$ from $\Pi_H$ to $\Pi_P$ such that Eq.~\eqref{xrepresent} holds for~$X$. Since $\Pi_H$ is uniform, $\lambda_X=|\Delta|$ for every $\Delta\in \Pi_H$ and $\lambda_X$ divides~$|X_H|$. By Lemma~\ref{orbitspip}, there exist groups $K_P^0\leq K_P\leq \aut(P)$ such that $\mathcal{A}_P=\cyc(K_P,P)$, $X_P\in \orb(K_P,P)$, $\Pi_P=\orb(K_P^0,X_P)$, and $|K_P/K_P^0|=|\Pi_P|$.

\begin{lemm}\label{nonreg8p}
If $X_H$ is nonregular, then $\Delta=\Delta^{-1}$ for every $\Delta\in \Pi_H$ and $\lambda_X>1$.
\end{lemm}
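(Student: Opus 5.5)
The plan is to exploit the rational conjugation $X\mapsto X^{(m)}$ via Lemma~\aref{A-burn}, choosing $m$ so that it inverts the $H$-component and fixes the $P$-component.

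First, I note what nonregularity forces. Since $X\subseteq G\setminus(H\cup P)$, we have $e\notin X_H$. Indeed, if $e\in X_H$, then $X_H=\{e\}$ because $X_H$ is a basic set of $\mathcal{A}_H$, and then $X\subseteq P$, which is impossible. If $H\cong E_8$, every nontrivial element has order~$2$, so $X_H$ is automatically regular and there is nothing to prove. Hence $H\cong C_4\times C_2$, and $X_H$ contains both an element $h_0$ of order~$2$ and an element $h_1$ of order~$4$.

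Next, by the Chinese remainder theorem I choose a positive integer $m$ with $m\equiv -1 \pmod 4$ and $m\equiv 1\pmod p$. Then $m$ is coprime to $|G|=8p$, and for $h\in H$ and $g\in P$ we have $(hg)^m=h^{-1}g$. By Lemma~\aref{A-burn}, $X^{(m)}\in\mathcal{S}(\mathcal{A})$. Pick $g_0\in X_P$ with $h_0g_0\in X$. Since $h_0=h_0^{-1}$, we get $h_0g_0=(h_0g_0)^m\in X\cap X^{(m)}$, and therefore $X^{(m)}=X$. This step is the crux: the element of order~$2$ in $X_H$ pins down $X^{(m)}=X$.

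Now I use the description from the proof of Lemma~\ref{basesethp}: every $\Delta\in\Pi_H$ equals $X_{H,g}=\{h\in X_H:~hg\in X\}$ for any $g\in\Delta^\varphi$. Take $h\in\Delta$ and such a $g$. Then $hg\in X$, so $h^{-1}g=(hg)^m\in X^{(m)}=X$. Hence $h^{-1}\in X_{H,g}=\Delta$, which gives $\Delta=\Delta^{-1}$.

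Finally, I bound $\lambda_X$. Let $\Delta_1\in\Pi_H$ be the class containing $h_1$. Since $h_1$ has order~$4$, we have $h_1^{-1}\neq h_1$, and $h_1^{-1}\in\Delta_1$ by the previous step. Therefore $|\Delta_1|\geq 2$. Since $\Pi_H$ is uniform, $\lambda_X=|\Delta|$ for every $\Delta\in\Pi_H$, so $\lambda_X\geq 2>1$.
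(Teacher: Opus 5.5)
Your proof is correct and is essentially the paper's argument. You make the same choice of $m\equiv 3 \pmod 4$, $m\equiv 1\pmod p$, use an involution in $X_H$ to force $X^{(m)}=X$, deduce $h^{-1}g=(hg)^m\in X$, and then get $\lambda_X>1$ from an element of order~$4$; you only phrase the $\Delta=\Delta^{-1}$ part directly rather than by contradiction, and add the harmless justification that $e\notin X_H$.
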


\begin{proof}
Note that $H\cong C_4\times C_2$ because $X_H$ is nonregular. Assume the contrary that $\Delta\neq \Delta^{-1}$ for some $\Delta\in \Pi_H$. Then there are $h\in \Delta$ and $g\in \Delta^{\varphi}$ such that $|h|=4$, $hg\in X$, and
\begin{equation}\label{hgx}
h^{-1}g\notin X. 
\end{equation}
Since $X_H$ is nonregular, there is $h^\prime\in H^\#$ such that $|h^\prime|=2$ and $h^\prime g^\prime\in X$ for some $g^\prime\in X_P$. As $p$ is odd, there exists a positive integer~$m$ such that $m\equiv 1\mod~p$ and $m\equiv 3\mod~4$. Lemma~\aref{A-burn} implies that $X^{(m)}\in \mathcal{S}(\mathcal{A})$. Observe that $(h^\prime g^\prime)^m=h^\prime g^\prime\in X^{(m)}\cap X$ because $|h^\prime|=2$ and $|g^\prime|=p$. So $X^{(m)}=X$. This implies that $h^{-1}g=(hg)^m\in X^{(m)}=X$, a contradiction to Eq.~\eqref{hgx}. If $\lambda_X=1$, then there is $\Delta\in \Pi_H$ consisting of an element of order~$4$, a contradiction to the first part of the lemma. 
\end{proof}

\begin{lemm}\label{rank28p}
Let $X_H=H^\#$. Then $X=X_H\times X_P$ or $H\cong E_8$ and $X\in \orb(K,G)$ for a subdirect product $K\leq \aut(G)$ of some $K_H\leq \aut(H)$ such that $\mathcal{T}_H=\cyc(K_H,H)$ and $K_P$.
\end{lemm}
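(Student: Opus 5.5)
Since $X_H=H^\#$, Lemma~\ref{basesethp} gives $X=\bigcup_{\Delta\in\Pi_H}\Delta\times\Delta^\varphi$ with $\Pi_H$ a uniform partition of $H^\#$. As $|H^\#|=7$ is prime, either $|\Pi_H|=1$, in which case $X=X_H\times X_P$ and we are done, or $|\Pi_H|=7$, i.e. $\lambda_X=1$ and every $\Delta$ is a singleton. In the second case Lemma~\ref{orbitspip} gives $|K_P/K_P^0|=7$ and $|\Pi_P|=7$.

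The first step is to exclude $H\cong C_4\times C_2$ when $\lambda_X=1$. Here $X_H=H^\#$ contains elements of orders $2$ and $4$, so it is nonregular. Lemma~\ref{nonreg8p} then forces $\lambda_X>1$, a contradiction. Hence in the second case $H\cong E_8$, and $\varphi$ is a bijection from $H^\#$ (identified with singletons) onto $\Pi_P=\orb(K_P^0,X_P)$, with $K_P/K_P^0\cong C_7$.

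Next we build $K_H$. The natural choice is a Singer cycle: take $K_H\le\aut(H)\cong GL_3(2)$ cyclic of order $7$ and $K_H^0=1$. Then $K_H$ is regular on $H^\#$, so $\mathcal{T}_H=\cyc(K_H,H)$ and $\Pi_H=\orb(K_H^0,X_H)$. The aim is to apply Lemma~\ref{subdirect}, which requires an isomorphism $\psi\colon K_H\to K_P/K_P^0$ satisfying Eq.~\eqref{commutebij}. Equivalently, the conjugate $\varphi^{-1}(K_P/K_P^0)^{\Pi_P}\varphi$, a regular $C_7$ on $H^\#$, must coincide with the permutation group induced on $H^\#$ by some Singer subgroup of $\aut(H)$. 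Once this holds, $\psi$ is conjugation by $\varphi$, and Lemma~\ref{subdirect} gives $X\in\orb(K,G)$ with $K=K(K_H,1,K_P,K_P^0,\psi)$, a subdirect product of $K_H$ and $K_P$.

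The main obstacle is to show that the transported cyclic group $C:=\varphi^{-1}(K_P/K_P^0)\varphi\le\sym(H^\#)$ consists of linear maps, i.e. that $C$ preserves the lines $\{x,y,xy\}$ of the Fano plane $H^\#$. I would first obtain the following consequence of Lemma~\aref{A-burn}: if $m\equiv1\pmod 2$ and $m$ represents a generator $\tau$ of $K_P/K_P^0$, then $X^{(m)}$ is a basic set of $\mathcal{A}$. Since $(hg)^m=hg^m$, the set $X^{(m)}$ consists of the elements $h\,\Delta_h^{\varphi}{}^{m}$, so $X^{(m)}$ again has the form \eqref{xrepresent}, with $\varphi$ replaced by $\varphi\circ\tau$. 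The basic sets in $H^\#P^\#$ are all rationally conjugate by Lemma~\ref{conj}, and therefore all have this shape.

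The combinatorial content should come from the structure constants. Consider the product $\underline{X}\,\underline{X}$, or $\underline{X}\cdot\underline{X^{(m)}}$, and project its $H$-part onto an element $z\in H^\#$ for each $z$. The coefficient of $zg$ counts pairs $x,y\in H^\#$ with $xy=z$ whose $P$-components multiply to $g$. For $\mathcal{A}$ to be an $S$-ring, these counts must be constant on basic sets lying over $z$. This forces a compatibility between $\varphi$ and the multiplication of $H$: for each $x,y$, the class $\varphi(xy)$ is determined by $\varphi(x)$ and $\varphi(y)$. This should translate into $C$ permuting lines.

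If this general argument proves messy, I would fall back on a finite check. Up to relabelling, there are only a few regular $C_7$ subgroups of $\sym(7)$ normalizing nothing special, so one can compare against the line structure directly, or reduce to the case $p\equiv1\pmod7$ and verify it.
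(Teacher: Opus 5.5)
Your reduction is correct and matches the paper's: $\lambda_X\in\{1,7\}$, the case $H\cong C_4\times C_2$ with $\lambda_X=1$ is excluded by Lemma~\ref{nonreg8p}, and for $H\cong E_8$ everything comes down to one claim. Take $\tau_0$ a generator of $K_P$ and let $\sigma_0$ be the permutation of $H^\#$ given by $\{h^{\sigma_0}\}=\{h\}^{\varphi\tau_0\varphi^{-1}}$; the claim is that $\sigma_0$ lies in $\aut(H)$. After that, Lemma~\ref{subdirect} with $K_H=\langle\sigma_0\rangle$, $K_H^0=1$ and $\psi$ induced by $\varphi$ finishes the proof.

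\textbf{The gap.} This linearity step is the whole content of the lemma, and you do not prove it.
\begin{enumerate}
\item[(a)] Your claim that $\varphi(xy)$ is determined by $\varphi(x)$ and $\varphi(y)$ is vacuous, since $\varphi$ is a bijection. What you need is a rule compatible with the $C_7$-action. Tracking coefficients of $zg$ with $g\neq e$ brings in cyclotomic numbers of $P$ rather than such a rule.
\item[(b)] For $m$ representing an element of $K_P$ you have $X^{(m)}_P=X_P$. Hence $\underline{X}\cdot\underline{X^{(m)}}$, and likewise $\underline{X}\,\underline{X}$, meets $H$ only when $X_P=X_P^{-1}$. So these are not the right products to look at.
\item[(c)] Your fallback finite check says nothing about what is to be checked. $\sym(H^\#)$ has $120$ subgroups of order~$7$, and only $8$ of them lie in $\aut(H)\cong GL_3(2)$. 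Some concrete constraint from the $S$-ring axioms is needed to discard the other $112$.
\item[(d)] Passing to $p\equiv 1\pmod 7$ is no reduction, because $7=|K_P/K_P^0|$ already divides $p-1$.
\end{enumerate}

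\textbf{The missing idea.} Use that $X_H=H^\#$ is a single basic set of $\mathcal{A}_H$, and multiply by $\underline{X}^{-1}$ rather than by $\underline{X}$.
\begin{enumerate}
\item[(1)] Index along a $\tau_0$-orbit as the paper does: $\Lambda_i=\Lambda_{h_0}^{\tau_0^i}$ and $\{h_i\}=\Lambda_i^{\varphi^{-1}}$ for $i\in\mathbb{Z}_7$. Then $X_i=X^{\id_H\times\tau_0^i}=\bigcup_j\{h_j\}\times\Lambda_{j+i}$ is basic by Lemma~\aref{A-burn}.
\item[(2)] The $\Lambda_j$ are pairwise disjoint. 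Therefore the elements of $H^\#$ occurring in $\underline{X_i}\cdot\underline{X}^{-1}$ are exactly $h_jh_{j+i}$, $j\in\mathbb{Z}_7$.
\item[(3)] For $i\not\equiv 0$ this is a nonempty $\mathcal{A}$-subset of the basic set $H^\#$, hence equal to $H^\#$. So $j\mapsto h_jh_{j+i}$ is a bijection onto $H^\#$.
\end{enumerate}
The paper reads off the possible Cayley tables from these equalities and checks that $h_j\mapsto h_{j+1}$ is an automorphism in each case.

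A quick way to see this: pull the lines $\{x,y,xy\}$ back to $\mathbb{Z}_7$. The condition in (3) then says that at each point $z$, the three pairs completing $z$ to a line have pairwise different differences up to sign. Of the $15$ pairings of $\mathbb{Z}_7\setminus\{z\}$, only three qualify. The lines through $z$ are either the three translates of $\{0,1,3\}$ containing $z$, or the three translates of $\{0,1,5\}$ containing $z$, or the progressions $\{z-d,z,z+d\}$ for $d=1,2,3$.
\begin{enumerate}
\item[$\bullet$] The progression case fails at $z+1$: the line $\{z-1,z,z+1\}$ through $z+1$ is of none of the three kinds there.
\item[$\bullet$] Any two points are collinear, so all lines are translates of a single base block.
\end{enumerate}
Hence $h_j\mapsto h_{j+1}$ preserves lines, and so lies in $\aut(H)$.
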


\begin{proof}
Since $\Pi_H$ is uniform, $\lambda_X$ divides $|X_H|=|H^\#|=7$ and hence $\lambda_X=7$ or $\lambda_X=1$. In the former case, $\Pi_H=\{X_H\}$ and hence $X=X_H\times X_P$ as desired. Suppose that $\lambda_X=1$. Then $|\Pi_H|=|\Pi_P|=7$ and $\Pi_H$ consists of singletons. If $H\cong C_4\times C_2$, then we obtain a contradiction to Lemma~\ref{nonreg8p}. Further, we assume that $H\cong E_8$.

Lemma~\ref{basesethp} implies that 
$$X=\bigcup \limits_{h\in H^\#}\{h\}\times \{h\}^\varphi.$$
Put $\Lambda_h=\{h\}^\varphi$, $h\in H^\#$. Since $|\Pi_P|=7$, we conclude that $K_P/K_P^0\cong C_7$. Let $\tau_0$ be a generator of $K_P$ and $h_0\in H^\#$. Clearly, $\tau_0$ induces the permutation of $\Pi_P$. Put $\Lambda_i=\Lambda_{h_0}^{\tau_0^i}$ and $\{h_i\}=(\Lambda_i)^{\varphi^{-1}}$, $i\in \mathbb{Z}_7$. Due to Lemma~\aref{A-burn},
$$X^{\id_H\times \tau_0^i}\in \mathcal{S}(\mathcal{A})$$
for every $i\in \mathbb{Z}$. Moreover, $X^{\id_H\times \tau_0^{i+7}}=X^{\id_H\times \tau_0^{i}}$ for every $i\in \mathbb{Z}$ because $K_P/K_P^0\cong C_7$. Put
$$X_i=X^{\id_H\times \tau_0^i},~i\in \mathbb{Z}_7.$$
In this notation, $X_0=X$. By the definition, we have
$$X_i=\bigcup \limits_{j\in \mathbb{Z}_7}\{h_j\}\times \Lambda_{j+i},~i\in \mathbb{Z}_7.$$ 
The set of elements from $H^\#$ which enter the element $\underline{X_i}\cdot \underline{X}^{-1}$ is $\{h_jh_{j+i}:~j\in \mathbb{Z}_7\}$. As $H^\#\in \mathcal{S}(\mathcal{A}_H)$, we obtain
$$\{h_jh_{j+i}:~j\in \mathbb{Z}_7\}=H^\#$$
for every $i\in \mathbb{Z}_7$. Using these equalities, one can deduce three possible Cayley tables for $H$ whose rows and columns are indexed by the elements $\{e\}\cup \{h_i:~i\in \mathbb{Z}_7\}$. In all of the cases, the bijection $\sigma_0\in \sym(H)$ such that 
$$e^{\sigma_0}=e,~h_i^{\sigma_0}=h_{i+1},~i\in \mathbb{Z}_7,$$
or equivalently,
$$e^{\sigma_0}=e,~\{h\}^{\sigma_0}=(\Lambda_h^{\tau_0})^{\varphi^{-1}}=\{h\}^{\varphi\tau_0\varphi^{-1}},~h\in H^\#$$
is an automorphism of $H$ of order~$7$. The latter equality implies that Eq.~\eqref{commutebij} holds for every $\sigma\in K_H=\langle \sigma_0\rangle$ and the isomorphism $\psi$ from $K_H=\langle \sigma_0 \rangle\cong C_7$ to $K_P/K_P^0=\langle K_P^0\tau_0 \rangle\cong C_7$ such that $\sigma_0^\psi=K_P^0\tau_0$. Thus, $X\in \orb(K,G)$, where $K=K(K_H,K_H^0,K_P,K_P^0,\psi)$ and $K_H^0$ is trivial, by Lemma~\ref{subdirect} as required.
\end{proof}

\begin{lemm}\label{wreath8p}
Let $X_H=H\setminus L$ for some nontrivial $\mathcal{A}_H$-subgroup $L\lneq H$ such that one of the following conditions holds:
\begin{enumerate}

\tm{1} $H\cong C_4\times C_2$ and $L\in\{A_0,A\}$;

\tm{2} $H\cong E_8$ and $|L|=2$;

\tm{3} $L\cong E_4$ and $\mathcal{A}_L=\mathbb{Z}L$.
\end{enumerate}
Then $L_0\leq \rad(X)$ for some $L_0\leq L$ such that $|L_0|=2$. 
\end{lemm}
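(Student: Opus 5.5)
The plan is to use the decomposition of Lemma~\ref{basesethp}, $X=\bigcup_{\Delta\in\Pi_H}\Delta\times\Delta^\varphi$, together with the fact that the part of $\underline{X}\,\underline{X}^{-1}$ supported on $H$ is an element of $\mathcal{A}_H$. Since $\Delta^\varphi\subseteq P^\#$ and distinct classes of $\Pi_P$ are disjoint, an element $h\in H^\#$ occurs in $\underline{X}\,\underline{X}^{-1}$ exactly when $h\in\Delta\Delta^{-1}$ for some $\Delta\in\Pi_H$. More precisely, the coefficient of $h$ is $\lambda'\cdot|\{\Delta:\ h\in\Delta\Delta^{-1}\}|$ counted with multiplicity, where $\lambda'=|\Delta^\varphi|$. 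Hence
$$T=\Bigl(\bigcup_{\Delta\in\Pi_H}\Delta\Delta^{-1}\Bigr)\setminus\{e\}$$
is an $\mathcal{A}_H$-set, and the coefficients are constant on basic sets of $\mathcal{A}_H$. It then suffices to find $l\in L$ of order~$2$ with $l\Delta=\Delta$ for every $\Delta\in\Pi_H$. In that case $L_0=\langle l\rangle$ satisfies $L_0X=X$, i.e. $L_0\le\rad(X)$.

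\emph{Step~1: $\lambda_X>1$.} Suppose $\lambda_X=1$, so $\Pi_H$ consists of singletons and $|\Pi_P|=|X_H|\in\{4,6\}$. I would argue as in Lemma~\ref{rank28p}. Take a generator $\tau_0$ of $K_P$ and form the basic sets $X_i=X^{\id_H\times\tau_0^i}$, which are basic by Lemma~\aref{A-burn}. The sets $\{h_jh_{j+i}\}$ of $H$-elements entering $\underline{X_i}\,\underline{X}^{-1}$ must be $\mathcal{A}_H$-sets, and in particular unions of $H\setminus L$ and of basic sets inside $L$. For $i=0$ this forces $\{e\}$ only, so no contradiction arises there. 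For $i\neq 0$ one gets $|X_H|$ products $h_jh_{j+i}$ that are constrained to be either all of $H\setminus L$ or inside $L$, with the right multiplicities. A small case check over conditions (1)--(3) should show this is impossible. In the case $H\cong C_4\times C_2$ one can shortcut: $X_H$ is nonregular in cases (1) and (3), so Lemma~\ref{nonreg8p} gives $\lambda_X>1$ immediately. Thus the genuine check is needed only for $E_8$ with $|L|=2$ or $L\cong E_4$.

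\emph{Step~2: $\Delta\Delta^{-1}\subseteq L$.} If $T$ meets $H\setminus L$, then $T\supseteq H\setminus L$ because $H\setminus L$ is a basic set of $\mathcal{A}_H$. Moreover, each element of $H\setminus L$ must have the same multiplicity. A counting argument should exclude this, using $|\Pi_H|=|X_H|/\lambda_X$, $|\Delta\Delta^{-1}\setminus\{e\}|\le\lambda_X(\lambda_X-1)$, and $\lambda_X\in\{2,3,4,6\}$ dividing $|X_H|$. The large values $\lambda_X=|X_H|$ give $\Delta=X_H=H\setminus L$, which is already $L$-invariant, so we are done in that case. The intermediate values are handled by comparing multiplicities on $H\setminus L$ with those on $T\cap L$. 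Consequently each $\Delta$ lies in a single $L$-coset, and since $|H\setminus L|/|L|$ cosets must be covered, $\lambda_X\le|L|$.

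\emph{Step~3: uniformity of the difference.} When $|L|=2$ (cases (1) with $L=A_0$ and (2)), Steps~1--2 give $\lambda_X=2$ and $\Delta=L\delta$, so $L_0=L$ works. When $|L|=4$, the differences $\Delta\Delta^{-1}\setminus\{e\}$ lie in $L$. I would use constancy of coefficients on the basic sets of $\mathcal{A}_L$ together with the $m$-trick of Lemma~\ref{nonreg8p}: take $m\equiv1\pmod p$ and $m\equiv-1\pmod 4$, so that $X^{(m)}=X$ and each $\Delta=\Delta^{-1}$. For $L=A$ this forces $\Delta\supseteq\{h,h^{-1}\}=\{h,a_0h\}$, so $L_0=A_0$. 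For $L\cong E_4$ with $\mathcal{A}_L=\mathbb{Z}L$, every element of $L$ is a basic set. Here I would show that a single $l\in L^\#$ occurs in $T$: otherwise all three elements occur with equal-type counts, which forces $\lambda_X=4$, i.e. $\Delta$ is a full $L$-coset, and then $L_0\le L\le\rad(X)$ anyway.

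I expect Step~1 for $H\cong E_8$, where no inversion trick is available, and the multiplicity count in Step~2 to be the main obstacles. If the abstract count fails, I would fall back on a direct enumeration of the finitely many configurations, since $|X_H|\le6$.
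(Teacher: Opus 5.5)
Your framework is sound, and much of the plan works. For $H\cong E_8$ with $|L|=2$, the cyclic-shift check in Step~1 succeeds: the constraints for $i=1,2$ force $h_{j+3}=h_ju$, and then the products $h_jh_{j+1}$ repeat. The multiplicity count in Step~2 also disposes of $\lambda_X\in\{2,3\}$ when $|L|=2$. That count is genuinely needed for $H\cong C_4\times C_2$, $L=A_0$, $\lambda_X=3$. There the split $\Pi_H=\{\{a,a^3,b\},\{ab,a^3b,a^2b\}\}$ has both classes inverse-closed, so Lemma~\ref{nonreg8p} alone (which is what the paper invokes at this point) does not exclude it. Your count does: the $H$-part of $\underline{X}\,\underline{X}^{-1}$ has coefficient $2\lambda'$ on $a,a^3,ab,a^3b$ and $0$ on $b,a^2b$.

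The gap is Step~1 under condition~(3), i.e.\ $L\cong E_4$ with $\mathcal{A}_L=\mathbb{Z}L$. First, your shortcut is false for $H\cong C_4\times C_2$. There $L=\{e,a_0,b,a_0b\}$ and $X_H=\{a,a^3,ab,a^3b\}$ consists of elements of order~$4$, so $X_H$ is regular and Lemma~\ref{nonreg8p} is unavailable. Second, and more seriously, your proposed check is vacuous in this case for both choices of $H$. Since $X_H$ is a single $L$-coset, every product $h_jh_{j+i}^{-1}$ lies in $L$. Every element of $L$ is a basic set of $\mathcal{A}_H$, so anything supported on $L$ automatically lies in $\mathcal{A}_H$. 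Hence the $H$-parts of $\underline{X_i}\,\underline{X}^{-1}$ impose no constraint at all. Neither a case check nor an enumeration of these data can exclude $\lambda_X=1$.

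The missing idea, which is how the paper handles it, is to use the two subgroups $L_1=\langle h_1\rangle$ and $L_2=\langle h_2\rangle$ of order~$2$ with $L=L_1\times L_2$. They are $\mathcal{A}$-subgroups precisely because $\mathcal{A}_L=\mathbb{Z}L$, so you may pass to $G/L_1$ and $G/L_2$. Suppose $\lambda_X=1$, write $X=\bigcup_{h\in X_H}\{h\}\times\Lambda_h$ with $\{\Lambda_h:\ h\in X_H\}=\Pi_P$, and fix $h\in X_H$.

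The image of $X$ in $G/L_1$ is a basic set of the dense $S$-ring $\mathcal{A}_{G/L_1}$, and it induces the partition $\{\Lambda_h\cup\Lambda_{hh_1},\Lambda_{hh_2}\cup\Lambda_{hh_1h_2}\}$ of $X_P$. By Lemma~\ref{orbitspip}, this is the set of orbits on $X_P$ of the unique subgroup of index~$2$ of the cyclic group $K_P$. The same argument in $G/L_2$ shows that $\{\Lambda_h\cup\Lambda_{hh_2},\Lambda_{hh_1}\cup\Lambda_{hh_1h_2}\}$ is that same orbit partition. This is impossible, because the four classes $\Lambda$ are pairwise disjoint.

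Once $\lambda_X>1$ is established this way, your Step~3 for $L\cong E_4$ is immediate. A $2$-subset of an $E_4$-coset and its complement are the two cosets of a single subgroup of order~$2$.
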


\begin{proof}
If $\lambda_X=|X_H|$, then $X=X_H\times X_P$. Therefore $L\leq\rad(X_H)\leq \rad(X)$ and hence $L_0\leq \rad(X)$ for every subgroup $L_0$ of $L$ of order~$2$ as required. Further, we assume that
\begin{equation}\label{lambdaless}
\lambda_X<|X_H|.
\end{equation}

The assumption of the lemma implies that $|L|\in \{2,4\}$ and hence $|X_H|\in \{4,6\}$. We divide the proof into two cases depending on~$|X_H|$.

\hspace{5mm}

\noindent \textbf{Case~1: $|X_H|=4$.} In this case, $|L|=4$ and $X_H$ is an $L$-coset. Recall that $\lambda_X$ divides $|X_H|=4$. Together with Eq.~\eqref{lambdaless}, this yields that 
$$\lambda_X\in\{1,2\}.$$

If $L$ is cyclic, then $H\cong C_4\times C_2$, $L=A\cong C_4$, i.e. Condition~$(1)$ of the lemma holds, and $X_H$ is nonregular. Lemma~\ref{nonreg8p} implies that $\lambda_X>1$ and hence $\lambda_X=2$. It suffices to show that $A_0\leq \rad(X)$. Assume the contrary that $A_0\nleq \rad(X)$. Then $A_0\nleq \rad(\Delta)$ for some $\Delta\in \Pi_H$. Clearly, $|\Delta|=\lambda_X=2$. Since $h^{-1}=a_0h$ for every $h\in H$ with $|h|=4$, where $a_0$ is the nontrivial element of~$A_0$, we conclude that $\Delta$ is nonregular and $\Delta\neq \Delta^{-1}$, a contradiction to Lemma~\ref{nonreg8p}. 

Let $L$ be a noncyclic. Then $L\cong E_4$. So Condition~$(3)$ of the lemma holds and hence $\mathcal{A}_L=\mathbb{Z}L$. If $\lambda_X=2$, then $\Pi_H$ is a partition of $X_H=H\setminus L$ into two $L_0$-cosets for some subgroup $L_0$ of $L$ of order~$2$ because $L\cong E_4$ and $X_H$ is an $L$-coset. This yields that $L_0\leq \rad(\Delta)$ for every $\Delta\in \Pi_H$ and hence $L_0\leq \rad(X)$ as desired.

Suppose that $\lambda_X=1$. Then $\Pi_H$ consists of four singletons and hence 
$$X=\bigcup \limits_{h\in X_H} \{h\}\times \Lambda_h,$$
where $\{\Lambda_h:~h\in X_H\}=\Pi_P$, by Lemma~\ref{basesethp}. Let $L_1$ and $L_2$ be subgroups of $L$ of order~$2$ such that $L=L_1\times L_2$, $h_1$ and $h_2$ be the nontrivial elements of $L_1$ and $L_2$, respectively, $\pi_1$ and $\pi_2$ be the canonical epimorphisms from $G$ to $G/L_1$ and from $G$ to $G/L_2$, respectively, and $h\in H\setminus L$. Note that $L_1$ and $L_2$ are $\mathcal{A}$-subgroups because $\mathcal{A}_L=\mathbb{Z}L$. The sets
$$X^{\pi_1}=h^{\pi_1}\times (\Lambda_h\cup \Lambda_{hh_1})^{\pi_1}\cup (hh_2)^{\pi_1}\times (\Lambda_{hh_2}\cup \Lambda_{hh_1h_2})^{\pi_1}$$
and
$$X^{\pi_2}=h^{\pi_2}\times (\Lambda_h\cup \Lambda_{hh_2})^{\pi_2}\cup (hh_1)^{\pi_2}\times (\Lambda_{hh_1}\cup \Lambda_{hh_1h_2})^{\pi_2}$$
are basic sets of $\mathcal{A}_{G/L_1}$ and $\mathcal{A}_{G/L_2}$, respectively. Due to Lemma~\ref{orbitspip} applied to $X^{\pi_1}$ and $X^{\pi_2}$, we have 
$$\{\Lambda_h\cup \Lambda_{hh_1},\Lambda_{hh_2}\cup \Lambda_{hh_1h_2}\}=\orb(\widetilde{K}_P^0,X_P)=\{\Lambda_h\cup \Lambda_{hh_2},\Lambda_{hh_1}\cup \Lambda_{hh_1h_2}\},$$
where $\widetilde{K}_P^0$ is the subgroup of $K_P$ of index~$2$. The latter equalities contradict to the fact that $\Pi_P$ is a partition of $X_P$.

\hspace{5mm}

\noindent \textbf{Case~2: $|X_H|=6$.} In this case, $|L|=2$ and $X_H$ is a union of three $L$-cosets. Let $u$ be the nontrivial element of~$L$. As $\lambda_X$ divides $|X_H|=6$ and Eq.~\eqref{lambdaless} holds, 
$$\lambda_X\in\{1,2,3\}.$$

Suppose that $\lambda_X=3$. Then $|\Pi_H|=|\Pi_P|=2$ and Lemma~\ref{basesethp} implies that
$$X=\Delta_0\times \Lambda_0\cup \Delta_1\times \Lambda_1,$$
where $\{\Delta_0,\Delta_1\}=\Pi_H$, $|\Delta_0|=|\Delta_1|=\lambda_X=3$, and $\{\Lambda_0,\Lambda_1\}=\Pi_P$. If $H\cong C_4\times C_2$, then $L=A_0$ and $u=a_0$, i.e. Condition~$(1)$ of the lemma holds. In this case, $X_H$ is nonregular. Observe that $\Delta_0$ contains an element~$h$ of order~$4$ and $A_0\nleq \rad(\Delta_0)$ because $|\Delta_0|=3$. Therefore $h^{-1}=ha_0\notin\Delta_0$ and consequently $\Delta_0\neq \Delta_0^{-1}$, a contradiction to Lemma~\ref{nonreg8p}	

Let $H\cong E_8$. Again, $L\nleq \rad(\Delta_0)$ because $|\Delta_0|=3$. Since $\Delta_0\cup \Delta_1=H\setminus L$, this implies that $\Delta_1=u\Delta_0$. One can see that 
$$X^{-1}=\Delta_0\times \Lambda_0^{-1}\cup \Delta_1\times \Lambda_1^{-1}$$
because $\Delta_0$ and $\Delta_1$ consist of elements of order~$2$. Let $\Delta_0=\{h_1,h_2,h_3\}$. As $\Delta_1=u\Delta_0$, the set of elements from $H^\#$ which enter the element $\underline{X}\cdot \underline{X}^{-1}$ coincides with the set of elements from $H^\#$ which enter the element $\underline{\Delta_0}^2$, i.e.
$$\{h_1h_2,h_1h_3,h_2h_3\}.$$
The above set is a nontrivial $\mathcal{A}_H$-set of size~$3$. However, the nontrivial basic sets of $\mathcal{A}_H$ are $L^\#=\{u\}$ and $X_H=H\setminus L$ of sizes~$1$ and~$6$ and hence there is no an $\mathcal{A}_H$-set of size~$3$, a contradiction.

Suppose that $\lambda_X=2$. Then $|\Pi_H|=|\Pi_P|=3$ and $|\Delta|=2$ for every $\Delta\in \Pi_H$. We are done if $L\leq \rad(\Delta)$ for every $\Delta\in \Pi_H$. Assume the contrary that $L\nleq \rad(\Delta)$ for some $\Delta\in \Pi_H$. Then $\mu_X=|H^\pi x^\pi\cap X^\pi|=2$ for every $x\in X$, where $\pi$ is the canonical epimorphism from $G$ to $G/L$. However, Lemma~\ref{basesethp} applied to $X^\pi$ yields that $\mu_X$ divides $|X^\pi_{H^\pi}|=|(X_H)^\pi|=3$, a contradiction.

Finally, suppose that $\lambda_X=1$. Due to Lemma~\ref{nonreg8p}, we have $H\cong E_8$. Note that $|\Pi_H|=|\Pi_P|=6$ and $\Pi_H$ consists of singletons. Lemma~\ref{basesethp} implies that 
$$X=\bigcup \limits_{h\in X_H}\{h\}\times \{h\}^\varphi.$$
Put $\Lambda_h=\{h\}^\varphi$, $h\in H^\#$. As $|\Pi_P|=6$, we conclude that $K_P/K_P^0\cong C_6$. Let $\tau_0$ be a generator of $K_P$ and $h_0\in H^\#$. Clearly, $\tau_0$ induces the permutation of $\Pi_P$. Put $\Lambda_i=\Lambda_{h_0}^{\tau_0^i}$ and $\{h_i\}=(\Lambda_i)^{\varphi^{-1}}$, $i\in \mathbb{Z}_6$. In this notation,
$$X=\bigcup \limits_{i\in \mathbb{Z}_6} \{h_i\}\times \Lambda_i.$$

Given $i\in \mathbb{Z}_6$, let $i^\prime\in \mathbb{Z}_6$ be such that $h_{i^\prime}=h_iu$. Let $\pi$ be the canonical epimorphism from $G$ to $G/L$. Then 
$$X^\pi=\bigcup \limits_{i\in \mathbb{Z}_6} h_i^\pi\times (\Lambda_i\cup \Lambda_{i^\prime}).$$
Lemma~\ref{orbitspip} applied to $X^\pi$ yields that $\{\Lambda_i\cup \Lambda_{i^\prime}:~i\in \mathbb{Z}_6\}$ is the set of orbits of the subgroup $\langle \tau_0^2\rangle$ of $K_P$ of index~$3$ on $X_P$. Therefore $i^\prime=i+3$ and hence
\begin{equation}\label{shift2}
h_iu=h_{i+3},~i\in \mathbb{Z}_6.
\end{equation}

Due to Lemma~\aref{A-burn},
$$X^\prime=X^{\id_H\times \tau_0}=\bigcup \limits_{i\in \mathbb{Z}_6}\{h_i\}\times \Lambda_{i+1}\in \mathcal{S}(\mathcal{A}).$$
The set of elements from $H^\#$ which enter the element $\underline{X}^\prime\cdot \underline{X}^{-1}$ is
$$\Gamma=\{h_ih_{i+1}:~i\in \mathbb{Z}_6\}.$$
Clearly, $\Gamma$ is an $\mathcal{A}_H$-set. An explicit computation using Eq.~\eqref{shift2} implies that 
$$\Gamma=\{h_0h_1,h_1h_2,h_0h_2u\}.$$
In particular, $|\Gamma|=3$. Again, we obtain a contradiction to the fact that $L^\#=\{u\}$ and $X_H=H\setminus L$ are the only nontrivial basic sets of $\mathcal{A}_H$. 
\end{proof}

\begin{proof}[Proof of Proposition~\ref{hpasubgroups8p}]
The $S$-ring $\mathcal{A}_H$ is Cayley isomorphic to one of the $S$-rings from Statements~$(1)$-$(4)$ of Lemma~\ref{c4c2} or to an $S$-ring dual to one of them.

\begin{lemm}\label{reduction8p}
If Proposition~\ref{hpasubgroups8p} holds in case when $\mathcal{A}_H$ is Cayley isomorphic to one of the $S$-rings from Statements~$(1)$-$(4)$ of Lemma~\ref{c4c2}, then it also holds in case when $\mathcal{A}_H$ is Cayley isomorphic to an $S$-ring dual to one of them.
\end{lemm}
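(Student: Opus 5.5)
The plan is to pass to the dual $S$-ring and transport the conclusion back. Suppose $\mathcal{A}$ is dense and $\mathcal{A}_H$ is Cayley isomorphic to an $S$-ring dual to one of those in Statements~$(1)$--$(4)$ of Lemma~\ref{c4c2}. Consider $\widehat{\mathcal{A}}$ over $\widehat{G}\cong G$. Since $H$ and $P$ are $\mathcal{A}$-subgroups, Lemma~\aref{A-dual}(1) shows that $P^\bot\cong H$ and $H^\bot\cong P$ are $\widehat{\mathcal{A}}$-subgroups, so $\widehat{\mathcal{A}}$ is again dense. By Lemma~\aref{A-dual}(2), $\widehat{\mathcal{A}}_{P^\bot}=\widehat{\mathcal{A}_{G/P}}$. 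Because $\mathcal{A}$ is dense and $G=H\times P$, the restriction $\mathcal{A}_{G/P}$ is Cayley isomorphic to $\mathcal{A}_H$ via the natural isomorphism $H\to G/P$. I expect this to be a standard fact from~\cite{Ry5}, obtained from Lemma~\aref{A-tenspr}(1): projections of basic sets onto $H$ are basic sets of $\mathcal{A}_H$. Hence $\widehat{\mathcal{A}}_{P^\bot}$ is Cayley isomorphic to $\widehat{\mathcal{A}_H}$, which is Cayley isomorphic to one of the $S$-rings in Statements~$(1)$--$(4)$ itself, since taking duals is an involution up to Cayley isomorphism. Identifying $\widehat{G}$ with $P^\bot\times H^\bot$, the hypothesis of the lemma therefore gives Proposition~\ref{hpasubgroups8p} for $\widehat{\mathcal{A}}$.

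Next I transfer each alternative back to $\mathcal{A}$.
\begin{itemize}
\item If $\widehat{\mathcal{A}}$ is cyclotomic, then $\mathcal{A}$ is cyclotomic; the dual of a cyclotomic $S$-ring over an abelian group is cyclotomic with respect to the contragredient action of the same automorphism group. This is presumably covered by Lemma~\aref{A-dual} or a standard remark in~\cite{Ry5}.
\item If $\widehat{\mathcal{A}}$ is a nontrivial tensor product, so is $\mathcal{A}$ by Lemma~\aref{A-dual}(3).
\item If $\widehat{\mathcal{A}}$ is a nontrivial $\widehat{U}/\widehat{L}$-wreath product, then by Lemma~\aref{A-dual}(4) $\mathcal{A}$ is a nontrivial $U/L$-wreath product with $U=\widehat{L}^\bot$ and $L=\widehat{U}^\bot$.
\end{itemize}
In the wreath case, condition~$(2)$ transfers directly: $|U|=|\widehat{G}|/|\widehat{L}|=|\widehat{G}/\widehat{L}|=4p$ and $|G/L|=|\widehat{U}|=4p$.

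For condition~$(1)$ I argue exactly as in the proof of Proposition~\ref{pnot8p}. By Lemma~\aref{A-dual}(2), $\widehat{\mathcal{A}_U}=\widehat{\mathcal{A}}_{\widehat{G}/\widehat{L}}$, $\widehat{\mathcal{A}_{G/L}}=\widehat{\mathcal{A}}_{\widehat{U}}$, and $\widehat{\mathcal{A}_{U/L}}=\widehat{\mathcal{A}}_{\widehat{U}/\widehat{L}}$. Combined with Lemma~\aref{A-dual}(3), which says $\otimes$-complementation is preserved under duality, this swaps the two alternatives:
\begin{itemize}
\item $\widehat{L}$ being $\otimes$-complemented in $\widehat{\mathcal{A}}_{\widehat{U}}$ yields that $\mathcal{A}_{S}$ is $\otimes$-complemented in $\mathcal{A}_{G/L}$;
\item $\widehat{\mathcal{A}}_{\widehat{S}}$ being $\otimes$-complemented in $\widehat{\mathcal{A}}_{\widehat{G}/\widehat{L}}$ yields that $L$ is $\otimes$-complemented in $\mathcal{A}_U$.
\end{itemize}
Either way, condition~$(1)$ holds for $\mathcal{A}$, and the lemma follows.

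The main obstacle I anticipate is the first step: rigorously identifying the restriction of $\widehat{\mathcal{A}}$ to the order-$8$ subgroup with the dual of $\mathcal{A}_H$, up to Cayley isomorphism. This needs density, so that $\mathcal{A}_{G/P}\cong\mathcal{A}_H$, and a careful choice of the identification $\widehat{G}\cong G$ sending $P^\bot$ to $H$. I would handle it by noting that for a dense $\mathcal{A}$ every basic set inside $HP/P$ lifts from a basic set of $\mathcal{A}_H$, since $H\cap P=1$.
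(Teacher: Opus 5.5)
Your proposal is correct and is essentially the paper's own argument. You pass to $\widehat{\mathcal{A}}$, use Lemma~\aref{A-dual}(1),(2) to see that $P^\bot,H^\bot$ are $\widehat{\mathcal{A}}$-subgroups and that $\widehat{\mathcal{A}}_{P^\bot}=\widehat{\mathcal{A}_{G/P}}$ is Cayley isomorphic to one of the $S$-rings of Lemma~\ref{c4c2}, apply the hypothesis to $\widehat{\mathcal{A}}$, and pull each alternative back with Lemma~\aref{A-dual}(2)--(5). You simply spell out what the paper leaves implicit: the identification $\mathcal{A}_{G/P}\cong\mathcal{A}_H$ via Lemma~\aref{A-tenspr}(1), and the swap of the two $\otimes$-complementation alternatives, exactly as in the proof of Proposition~\ref{pnot8p}.
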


\begin{proof}
From Lemma~\aref{A-dual}(1) it follows that the subgroups $P^\bot$ and $H^\bot$ of $\widehat{G}$ of orders~$8$ and~$p$, respectively, are $\widehat{\mathcal{A}}$-subgroups. Suppose that $\mathcal{A}_H$ is Cayley isomorphic to an $S$-ring dual to one of the $S$-rings from Statements~$(1)$-$(4)$ of Lemma~\ref{c4c2}. Then $\widehat{\mathcal{A}}_{P^\bot}$ is Cayley isomorphic to one of the above $S$-rings by Lemma~\aref{A-dual}(2). So Proposition~\ref{hpasubgroups8p} holds for $\widehat{\mathcal{A}}$ by the assumption of the lemma. Now the required follows from the fact that the $S$-ring dual to $\widehat{\mathcal{A}}$ is equal to~$\mathcal{A}$ and Statements~$(2)$-$(5)$ of Lemma~\aref{A-dual}.
\end{proof}

In view of Lemma~\ref{reduction8p}, we may assume that $\mathcal{A}_H$ is one of the $S$-rings from Statements~$(1)$-$(4)$ of Lemma~\ref{c4c2}. Let us consider all the cases. Suppose that Statement~$(1)$ of Lemma~\ref{c4c2} holds for $\mathcal{A}_H$, i.e. $\mathcal{A}_H=\mathcal{T}_H$. Let $X\in \mathcal{S}(\mathcal{A})_{G\setminus (H\cup P)}$. Then $X_H=H^\#$ by Lemma~\aref{A-tenspr}(1). Lemma~\ref{rank28p} implies that $X=X_H\times X_P$ or $H\cong E_8$ and $X\in \orb(K,G)$ for a subdirect product $K\leq \aut(G)$ of some $K_H\leq \aut(H)$ such that $\mathcal{T}_H=\cyc(K_H,H)$ and some $K_P\leq \aut(P)$ such that $\mathcal{A}_P=\cyc(K_P,P)$. Due to Lemma~\ref{conj}, every $Y\in \mathcal{S}(\mathcal{A})_{G\setminus (H\cup P)}$ is rationally conjugate to~$X$. Therefore $Y=Y_H\times Y_P$ for every $Y\in \mathcal{S}(\mathcal{A})_{G\setminus (H\cup P)}$ or $Y\in \orb(K,G)$ for every $Y\in \mathcal{S}(\mathcal{A})_{G\setminus (H\cup P)}$. In the former case, $\mathcal{A}=\mathcal{A}_H\otimes \mathcal{A}_P$, whereas in the latter one, $\mathcal{A}=\cyc(K,G)$. In both cases, the conclusion of Proposition~\ref{hpasubgroups8p} holds and we are done.

Suppose that Statement~$(2)$ of Lemma~\ref{c4c2} holds for $\mathcal{A}_H$, i.e. $\mathcal{A}_H=\mathcal{A}_U\otimes \mathbb{Z}L$ for $\mathcal{A}_H$-subgroups $U$ and $L$ such that $|U|=4$, $|L|=2$, and $H=U\times L$. Then $\mathcal{A}=\mathcal{A}_{U\times P}\otimes \mathbb{Z}L$ by Lemma~\aref{A-tenspr}(2) and we are done.

Suppose that Statement~$(3)$ of Lemma~\ref{c4c2} holds for $\mathcal{A}_H$. In this case, $\mathcal{A}_H=\mathcal{A}_L\wr \mathcal{T}_{H/L}$ for some proper nontrivial $\mathcal{A}_H$-subgroup $L$ of $H$ and one of the additional conditions given in Statement~$(2)$ of Lemma~\ref{c4c2} holds. Let $X\in \mathcal{S}(\mathcal{A})_{G\setminus ((L\times P)\cup H)}$. Then $L_0\leq \rad(X)$ by Lemma~\ref{wreath8p} for some subgroup $L_0$ of $L$ of order~$2$. Since $\mathcal{A}_{H/L}=\mathcal{T}_{H/L}$, every $Y\in \mathcal{S}(\mathcal{A})_{G\setminus ((L\times P)\cup H)}$ is rationally conjugate to~$X$ by Lemma~\ref{conj}. Therefore $L_0\leq \rad(Y)$ for every $Y\in \mathcal{S}(\mathcal{A})_{G\setminus ((L\times P)\cup H)}$. Thus, $\mathcal{A}$ is the $S=(L\times P)/L_1$-wreath product, where $L_1\leq L$ is the least $\mathcal{A}$-subgroup containing $L_0$.

Let $L_1=L$. If $|L|=2$, then $\mathcal{A}_L=\mathbb{Z}L$. Lemma~\aref{A-tenspr}(2) implies that $\mathcal{A}_{L\times P}=\mathbb{Z}L\otimes \mathcal{A}_P$. So $S$ satisfies Condition~$(1)$ from Proposition~\ref{hpasubgroups8p}. If $|L|=4$, then $S\cong P$ and $H/L$ is an $\mathcal{A}_{G/L}$-subgroup of order~$2$. So $\mathcal{A}_{H/L}=\mathbb{Z}(H/L)$ and hence $\mathcal{A}_{G/L}=\mathcal{A}_S\otimes \mathbb{Z}(H/L)$ by Lemma~\aref{A-tenspr}(2). This yields that $S$ satisfies Condition~$(1)$ from Proposition~\ref{hpasubgroups8p}.

Let $L_1<L$, then $L_1=L_0$, $|L_1|=2$, and $|L|=4$. Therefore $|L\times P|=|G/L_1|=4p$ and $S$ satisfies Condition~$(2)$ from Proposition~\ref{hpasubgroups8p}.

It remains to consider the case when Statement~$(4)$ of Lemma~\ref{c4c2} holds for $\mathcal{A}_H$, i.e. $\mathcal{A}_H=\cyc(K_H,H)$, where $K_H\leq \aut(H)$ is one of the groups $K_1$-$K_3$ from Table~$1$. In all cases, there is an $\mathcal{A}_H$-subgroup $U$ of order~$4$. Let 
$$X\in \mathcal{S}(\mathcal{A})_{G\setminus ((U\times P)\cup H)}.$$
Then $X_H\in \mathcal{S}(\mathcal{A}_H)$ by Lemma~\aref{A-tenspr}(1) and hence 
$$X_H\in \orb(K_H,H).$$
Lemma~\ref{basesethp} yields that there exist uniform partitions $\Pi_H$ and $\Pi_P$ of $X_H$ and $X_P$, respectively, and a bijection $\varphi$ from $\Pi_H$ to $\Pi_P$ such that Eq.~\eqref{xrepresent} holds. Lemma~\aref{A-intersection} implies that the number $\lambda_X=|Hx\cap X|$ does not depend on $x\in X$. Since $\Pi_H$ is uniform, $\lambda_X$ divides $|X_H|$. Due to Lemma~\ref{orbitspip}, there exist groups $K_P^0\leq K_P\leq \aut(P)$ such that $\mathcal{A}_P=\cyc(K_P,P)$, $X_P\in \orb(K_P,P)$, $\Pi_P=\orb(K_P^0,X_P)$, and $|K_P/K_P^0|=|\Pi_P|$. 

Let $K_H=K_1$. Then $H\cong C_4\times C_2$, $U=A$, $\mathcal{A}_A=\mathbb{Z}A$, $\mathcal{S}(\mathcal{A}_H)_{H\setminus A}=\{A_0b,A_0ab\}$, and $|K_H|=|X_H|=2$. At first, let us prove that 
$$X\in \orb(K,G)$$
for some subdirect product $K\leq \aut(G)$ of $K_H$ and $K_P$. As $\lambda_X$ divides $|X_H|=2$, we have $\lambda_X\in\{1,2\}$. If $\lambda_X=2$, then $X=X_H\times X_P$ and hence $X\in \orb(K,G)$ for $K=K_H\times K_P$. If $\lambda_X=1$, then $|\Pi_H|=|\Pi_P|=2$ and $\Pi_H$ consists of two singletons. So $\Pi_H=\orb(K_H^0,X_H)$, where $K_H^0$ is the trivial subgroup of $K_H$, and $|K_P:K_P^0|=2$. One can see that $K_H/K_H^0\cong K_P/K_P^0\cong C_2$. Therefore $X\in \orb(K,G)$, where $K=K(K_H,K_H^0,K_P,K_P^0,\psi)$ for a unique isomorphism $\psi$ from $K_H/K_H^0\cong C_2$ to $K_P/K_P^0\cong C_2$, by Lemma~\ref{subdirect} and Remark~\ref{subdirectrem}.

Now let us prove that
\begin{equation}\label{orbt}
T\in \orb(K,G)
\end{equation}
for every $T\in \mathcal{S}(\mathcal{A})$ which is equivalent to $\mathcal{A}=\cyc(K,G)$ as desired. If $T\in \mathcal{S}(\mathcal{A}_H)\cup \mathcal{S}(\mathcal{A}_P)$, then Eq.~\eqref{orbt} holds because $K$ is a subdirect product of $K_H$ and $K_P$. If $T\in \mathcal{S}(\mathcal{A})_{(A\times P)\setminus (A\cup P)}$, then $T=\{h\}\times T_P$, where $h\in A^\#$ and $T_P\in \mathcal{S}(\mathcal{A}_P)$, by Lemma~\aref{A-tenspr}(2). Together with the definition of $K_H=K_1$, this yields that Eq.~\eqref{orbt} holds. Finally, suppose that $T\in \mathcal{S}(\mathcal{A})_{G\setminus ((A\times P)\cup H)}$. Then $T_H=hX_H$ for some $h\in \{e,a\}$ because $\mathcal{S}(\mathcal{A}_H)_{H\setminus A}=\{A_0b,A_0ab\}$. Together with Lemma~\ref{conj}, this implies that $T=hX^{(m)}$ for some $h\in \{e,a\}$ and integer $m$ coprime to~$2p$ and hence Eq.~\eqref{orbt} holds.

Let $K_H\in\{K_2,K_3\}$. Then $\mathcal{A}_{U}=\mathbb{Z}L\wr \mathbb{Z}(U/L)\cong \mathbb{Z}C_2\wr \mathbb{Z}C_2$ for an $\mathcal{A}_U$-subgroup $L$ of order~$2$,
$$\mathcal{S}(\mathcal{A}_H)=\{\{e\},\{h_0\},Lh_1,Uh_2\},$$
where $h_0$ is the nontrivial element of $L$, $h_1\in U\setminus L$, and $h_2\in H\setminus U$,
$K_H=\langle \sigma_1 \rangle \rtimes \langle \sigma_2 \rangle\cong D_8$, where $\sigma_1,\sigma_2\in \aut(H)$ are such that
$$h_0^{\sigma_1}=h_0^{\sigma_2}=h_0,~h_1^{\sigma_1}=h_0h_1,~h_1^{\sigma_2}=h_1,~h_2^{\sigma_1}=h_2^{\sigma_2}=h_1h_2,$$
and 
$$X_H=Uh_2.$$
The latter implies that 
$$\lambda_X\in \{1,2,4\}.$$
Let $Y\in \mathcal{S}(\mathcal{A})_{Lh_1P^\#}$ such that $Y_P=X_P$. Clearly,
$$Y_H\in \orb(K_H,H)$$
and
$$\lambda_Y=|Y\cap Hy|\in\{1,2\}.$$
Lemma~\ref{conj} implies that every basic set from $\mathcal{S}(\mathcal{A})_{G\setminus (H\cup (L\times P))}$ is rationally conjugate to $X$ or $Y$, whereas every basic set $T\in\mathcal{S}(\mathcal{A})_{L^\#\times P}$ is of the form $T=\{h_0\}\times T_P$ by Lemma~\aref{A-tenspr}(2). Thus, to prove that $\mathcal{A}$ is cyclotomic, it suffices to find a subdirect product $K$ of $K_H$ and $K_P$ such that 
$$X,Y\in \orb(K,G)$$ 
which will be done further. We divide the rest of the proof into several cases depending on~$\lambda_X$.

\hspace{5mm}

\noindent \textbf{Case~$1$: $\lambda_X=4$.} In this case, $X=X_H\times X_P$, $\Pi_H=\{X_H\}$, and $\Pi_P=\{X_P\}$. If $\lambda_Y=2$, then $Y=Y_H\times Y_P$ and hence $X,Y\in \orb(K,G)$ for $K=K_H\times K_P$. Suppose that $\lambda_Y=1$. Then Lemma~\ref{basesethp} and Lemma~\ref{orbitspip} imply that $K_P^0$ is the subgroup of $K_P$ of index~$2$ and
\begin{equation}\label{yform}
Y=\{h_1\}\times \Gamma_{h_1} \cup \{h_0h_1\} \times \Gamma_{h_0h_1},
\end{equation}
where $\{\Gamma_{h_1},\Gamma_{h_0h_1}\}=\orb(K_P^0,Y_P)$. Let $K_H^0=\langle \sigma_1^2,\sigma_2\rangle \cong E_4$ and $\psi$ a unique isomorphism from $K_H/K_H^0\cong C_2$ to $K_P/K_P^0\cong C_2$. By the definition of $K_H^0$, we have $\Pi_H=\{X_H\}=\orb(K_H^0,X_H)$ and $\{\{h_1\},\{h_0h_1\}\}=\orb(K_H^0,Y_H)$. Together with Lemma~\ref{subdirect} and Remark~\ref{subdirectrem}, the latter yields that $Y\in \orb(K,G)$, where $K=K(K_H,K_H^0,K_P,K_P^0,\psi)$. As $\Pi_H=\{X_H\}=\orb(K_H^0,X_H)$, we conclude that $X=X_H\times X_P\in \orb(K,G)$ as desired.

\hspace{5mm}

\noindent \textbf{Case~$2$: $\lambda_X=2$.} In this case, $K_P^0$ is the subgroup of $K_P$ of index~$2$ and $\Pi_H$ coincides with one of the partitions 
$$\Pi_1=\{\{h_2,h_0h_2\},\{h_1h_2,h_0h_1h_2\}\},~\Pi_2=\{\{h_2,h_1h_2\},\{h_0h_2,h_0h_1h_2\}\},$$
$$\Pi_3=\{\{h_2,h_0h_1h_2\},\{h_0h_2,h_1h_2\}\}.$$
In all of these cases, $\rad(\Delta)$ is a subgroup of $U$ of order~$2$ not depending on $\Delta\in \Pi_H$. Since $\mathcal{A}_{U}=\mathbb{Z}L\wr \mathbb{Z}(U/L)$, the subgroup $L$ is a unique $\mathcal{A}_U$-subgroup of order~$2$. We conclude that $\Pi_H$ is a partition of $X_H=Uh_2$ into the $L$-cosets, i.e.
\begin{equation}\label{piha0}
\Pi_H=\{Lh_2,Lh_1h_2\}.
\end{equation}

If $\lambda_Y=2$, i.e. $Y=Y_H\times Y_P$, then put $K_H^0=\langle \sigma_1^2,\sigma_2\sigma_1\rangle \cong E_4$. It is easy to verify using Eq.~\eqref{piha0} that $\Pi_H=\orb(K_H^0,X_H)$. So Lemma~\ref{subdirect} and Remark~\ref{subdirectrem} yield that $X\in \orb(K,G)$, where $K=K(K_H,K_H^0,K_P,K_P^0,\psi)$ and $\psi$ is a unique isomorphism from $K_H/K_H^0\cong C_2$ to $K_P/K_P^0\cong C_2$. One can see also that $\{Y_H\}=\{\{h_1,h_0h_1\}\}=\orb(K_H^0,Y_H)$ and hence $Y=Y_H\times Y_P\in \orb(K,G)$.

If $\lambda_Y=1$, i.e. $Y$ is of the form from Eq.~\eqref{yform}, then put $\widetilde{K}_H=\langle \sigma_1 \rangle\cong C_4$ and $\widetilde{K}_H^0=\langle \sigma_1^2\rangle\cong C_2$. It is easy to see that $X_H,Y_H\in \orb(\widetilde{K}_H,H)$. Using Eqs.~\eqref{yform} and~\eqref{piha0}, one can check that $\Pi_H=\orb(\widetilde{K}_H^0,X_H)$ and $\{\{h_1\},\{h_0h_1\}\}=\orb(\widetilde{K}_H^0,Y_H)$. Therefore $X,Y\in \orb(K,G)$, where $K=K(\widetilde{K}_H,\widetilde{K}_H^0,K_P,K_P^0,\psi)$ and $\psi$ is a unique isomorphism from $\widetilde{K}_H/\widetilde{K}_H^0\cong C_2$ to $K_P/K_P^0\cong C_2$ by Lemma~\ref{subdirect} and Remark~\ref{subdirectrem}.

\hspace{5mm}

\noindent \textbf{Case~$3$: $\lambda_X=1$.} In this case, $K_P^0$ is the subgroup of $K_P$ of index~$4$ by Lemma~\ref{orbitspip}, $\Pi_H$ consists of four singletons and hence 
$$X=\bigcup \limits_{h\in X_H} \{h\}\times \Lambda_h,$$
where $\Lambda_h=\{h\}^\varphi$, where $\varphi$ is a bijection from $\Pi_H$ to $\Pi_P$ such that Eq.~\eqref{xrepresent} holds, and $\{\Lambda_h:~h\in X_H\}=\Pi_P$. Let $\widetilde{K}_H=\langle \sigma_1 \rangle \cong C_4$ and $\widetilde{K}_H^0$ the trivial subgroup of $\widetilde{K}_H$. One can see that $X_H,Y_H\in \orb(\widetilde{K}_H,H)$ and $\Pi_H=\orb(\widetilde{K}_H^0,X_H)$.

Let $\pi$ be the canonical epimorphism from $G$ to $G/L$. Then the set
$$X^{\pi}=h_2^{\pi}\times (\Lambda_{h_2}\cup \Lambda_{h_0h_2})^{\pi}\cup (h_1h_2)^{\pi}\times (\Lambda_{h_1h_2}\cup \Lambda_{h_0h_1h_2})^{\pi}$$
is a basic set of $\mathcal{A}_{G/L}$. From Lemma~\ref{orbitspip} applied to $X^{\pi}$ it follows that
\begin{equation}\label{orbitsind2} 
\{\Lambda_{h_2}\cup \Lambda_{h_0h_2},\Lambda_{h_1h_2}\cup \Lambda_{h_0h_1h_2}\}=\orb(\widetilde{K}_P^0,X_P),
\end{equation}
where $\widetilde{K}_P^0$ is the subgroup of $K_P$ of index~$2$. Since $K_P/K_P^0$ acts on $\Pi_P$ as a regular cyclic group of order~$4$, there exists a unique $\tau_1\in K_P/K_P^0$ such that $\Lambda_{h_2}^{\tau_1}=\Lambda_{h_1h_2}=\Lambda_{h_2^{\sigma_1}}$. In view of Eq.~\eqref{orbitsind2}, we conclude that $K_P/K_P^0=\langle \tau_1 \rangle$ and $\Lambda_{h_2}^{\tau_1^2}=\Lambda_{h_0h_2}=\Lambda_{h_2^{\sigma_1^2}}$. Therefore
\begin{equation}\label{cycl4}
\Lambda_h^{\tau}=\Lambda_{h^\sigma}
\end{equation}
for every $\sigma\in\widetilde{K}_H$ and every $\tau\in K_P/K_P^0$. Let $\psi$ be an isomorphism from $\widetilde{K}_H=\widetilde{K}_H/\widetilde{K}_H^0\cong C_4$ to $K_P/K_P^0\cong C_4$ such that 
$$\sigma_1^\psi=\tau_1$$ 
and $K=K(\widetilde{K}_H,\widetilde{K}_H^0,K_P,K_P^0,\psi)$. It follows from Eq.~\eqref{cycl4} that Eq.~\eqref{commutebij} holds for $\psi$, $\varphi$, every $\sigma\in \widetilde{K}_H$, and every $\tau\in K_P/K_P^0$. Therefore $X\in \orb(K,G)$ by Lemma~\ref{subdirect}. 

Let $\lambda_Y=1$, i.e. $Y$ is of the form from Eq.~\eqref{yform}. Then 
$$\{\Gamma_{h_1},\Gamma_{h_0h_1}\}=\orb(\widetilde{K}_P^0,Y_P)=\{\Lambda_{h_2}\cup \Lambda_{h_0h_2},\Lambda_{h_1h_2}\cup \Lambda_{h_0h_1h_2}\}$$
by Lemma~\ref{orbitspip}. Together with Eq.~\eqref{cycl4}, the above equality implies that $\Gamma_{h_1}^{\tau_1}=\Gamma_{h_0h_1}$ and $\Gamma_{h_1}^{\tau_1^2}=\Gamma_{h_1}$. Since also $h_1^{\sigma_1}=h_0h_1$ and $h_1^{\sigma_1^2}=h_1$, we conclude that $Y\in \orb(K,G)$ as required.

Let $\lambda_Y=2$, i.e. $Y=Y_H\times Y_P$. Since every basic set of $\mathcal{A}_{U\times P}$ outside $(L\times P)\cup U$ is rationally conjugate to $Y$ (Lemma~\ref{conj}) and $\mathcal{A}_{L\times P}=\mathbb{Z}L\otimes \mathcal{A}_P$ (Lemma~\aref{A-tenspr}(2)), we obtain
\begin{equation}\label{tensy}
\mathcal{A}_{V}=\mathcal{A}_{U}\otimes \mathcal{A}_P, 
\end{equation}
where $V=U\times P$. 

Let $\widehat{\mathcal{A}}$ be the $S$-ring over $\widehat{G}$ dual to $\mathcal{A}$. Then the Sylow $2$- and $p$-subgroups $P^\bot$ and $H^\bot$ are $\widehat{\mathcal{A}}$-subgroups by Lemma~\aref{A-dual}(1). From the definition of $K_H$ it follows that $\mathcal{A}_H=\cyc(K_H,H)\cong \mathbb{Z}C_2\wr \mathbb{Z}C_2\wr \mathbb{Z}C_2$. So 
\begin{equation}\label{dual8p1}
\widehat{\mathcal{A}}_{P^\bot}=\widehat{\mathcal{A}_{G/P}}\cong \mathbb{Z}C_2\wr \mathbb{Z}C_2\wr \mathbb{Z}C_2\cong \mathcal{A}_H
\end{equation}
by Statements~$(2)$ and~$(4)$ of Lemma~\aref{A-dual}.

Due to Lemma~\aref{A-dual}(2), we have $\widehat{\mathcal{A}}_{\widehat{G}/V^\bot}=\widehat{\mathcal{A}_V}$. Lemma~\aref{A-dual}(3) and Eq.~\eqref{tensy} yield that $\widehat{\mathcal{A}}_{\widehat{G}/V^\bot}$ is a tensor product of $S$-rings over groups of orders~$4$ and~$p$. Together with Lemma~\aref{A-intersection}, this implies that
\begin{equation}\label{dual8p2}
\lambda_{\widehat{X}}=|\widehat{X}\cap P^\bot\widehat{x}|\geq 2
\end{equation}
for every $\widehat{X}\in \mathcal{S}(\widehat{\mathcal{A}})$ outside $L^\bot\geq V^\bot$ of order~$4p$ and every $\widehat{x}\in \widehat{X}$. From Eqs.~\eqref{dual8p1} and~\eqref{dual8p2}, it follows that $\widehat{\mathcal{A}}$ is Cayley isomorphic to an $S$-ring from Case~$1$ or~$2$ and hence $\widehat{\mathcal{A}}$ is cyclotomic. Thus, $\mathcal{A}$ is cyclotomic by Lemma~\aref{A-dual}(5).
\end{proof}

Theorem~\ref{8p} immediately follows from Propositions~\ref{hnot8p},~\ref{pnot8p}, and~\ref{hpasubgroups8p}.

\section{Proof of Theorem~\ref{main1}}

\begin{lemm}\label{sections3}
Every proper subgroup of $G$ is a Schur group. 
\end{lemm}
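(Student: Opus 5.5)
Here $G\cong C_4\times C_{2p}$ or $G\cong E_8\times C_p$ with $p$ an odd prime. The plan is simply to list the proper subgroups of $G$ up to isomorphism and match each one against the groups already known to be Schur from the results summarized in the introduction, i.e. \cite{EKP1,EKP2,MP,PR,Ry2,Ry5}. Since schurity is inherited by subgroups and isomorphism-invariant, it suffices to treat the maximal subgroups. Even so, listing all subgroup types is harmless and makes the check transparent.

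Every subgroup of $G$ has the form $H_0\times P_0$, where $H_0$ is a subgroup of the Sylow $2$-subgroup $H$ and $P_0\leq P\cong C_p$. We split into two cases.

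\emph{Case $P_0$ trivial.} Then the subgroup is a $2$-group of order at most $8$. For $H\cong C_4\times C_2$ the possibilities are $C_1$, $C_2$, $C_4$, $E_4$ and $C_4\times C_2$. For $H\cong E_8$ they are $C_1$, $C_2$, $E_4$ and $E_8$. All of these occur in the classification list: the cyclic groups $C_{2^k}$ are in Statement~(1), $E_4$ and $E_8$ are in Statement~(2), and $C_2\times C_4$ is of the form $C_2\times C_{2^k}$ in Statement~(3).

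\emph{Case $P_0=P$.} Then the subgroup is $H_0\times C_p$ with $H_0$ a proper subgroup of $H$, since $H_0=H$ gives $G$ itself. Hence $|H_0|\leq 4$, and the possible isomorphism types are:
\begin{itemize}
\item $C_p$, $C_{2p}$ and $C_{4p}$, which are cyclic of orders $p^k$, $2p$ and $pq^k$ (with $q=2$), all in Statement~(1);
\item $E_4\times C_p$, which is in Statement~(3) as $E_4\times C_{p^k}$ with $k=1$.
\end{itemize}
Note that $C_2\times C_{2p}\cong E_4\times C_p$, and $C_4\times C_p\cong C_{4p}$, so these are already covered.

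Each of these groups was proved Schur in the cited earlier works. Specifically:
\begin{itemize}
\item the cyclic groups follow from \cite{EKP1,PR};
\item $E_4$ and $E_8$, as well as $C_2\times C_4$, are handled in \cite{EKP2,Ry5} and by direct computation in \cite{GAP};
\item $E_4\times C_p$ is handled in \cite{EKP2,Ry2}.
\end{itemize}
This gives the lemma.

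The only point needing care is to attribute each small group correctly to the earlier result proving it Schur, rather than citing the final classification, which itself depends on this lemma.
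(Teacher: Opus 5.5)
This is essentially the paper's argument: split by the isomorphism type of the proper subgroup and cite \cite[Theorem~1.1]{EKP1} for the cyclic ones, \cite[Theorem~1.2]{EKP2} for $E_4$ and $E_8$, and \cite[Theorem~1.5]{EKP2} for $E_4\times C_p$; your enumeration is actually more complete, since it includes the Sylow $2$-subgroup $C_4\times C_2$ of $C_4\times C_{2p}$, which the paper's ``otherwise $N\cong E_4\times C_p$'' clause skips. Do tighten your attributions, though: $C_4\times C_2$ is Schur as a member of the family $C_2\times C_{2^k}$ from \cite{MP} (or directly by the computations in \cite{Ziv}), whereas \cite{PR} and \cite{Ry2} concern $E_9\times C_2$, $E_9\times C_q$ and $C_3\times C_{3^k}$, not cyclic groups or $E_4\times C_p$.
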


\begin{proof}
Let $N\lneq G$. If $N$ is cyclic, then $N$ is a Schur group by~\cite[Theorem~1.1]{EKP1}, whereas if $N\cong E_4$ or $N\cong E_8$, then $N$ is a Schur group by~\cite[Theorem~1.2]{EKP2}. Otherwise, $N\cong E_4 \times C_p$ is a Schur group by~\cite[Theorem~1.5]{EKP2}.
\end{proof}

Let $\mathcal{A}$ be a nontrivial $S$-ring over $G$. Let us prove that $\mathcal{A}$ is schurian. Theorem~\ref{8p} holds for $\mathcal{A}$. We are done if $\mathcal{A}$ is cyclotomic. If $\mathcal{A}$ is a nontrivial tensor product, then $\mathcal{A}$ is schurian by Lemma~\aref{A-schurtens} and Lemma~\ref{sections3}. So we may assume that $\mathcal{A}$ is a nontrivial $S$-wreath product for some $\mathcal{A}$-section $S=U/L$ and one of Statements~$(1)$-$(4)$ from Theorem~\ref{8p} holds. The $S$-rings $\mathcal{A}_U$ and $\mathcal{A}_{G/L}$ are schurian by Lemma~\ref{sections3}. If Statement~$(1)$ or~$(2)$ holds, i.e. $|S|\leq 2$ or $|S|=4$ and $\mathcal{A}_S\neq \mathcal{T}_S$, then $\mathcal{A}_S$ is $2$-minimal by Lemma~\aref{A-2minsmall} and hence $\mathcal{A}$ is schurian by Lemma~\aref{A-2min}. If Statement~$(3)$ holds, i.e. $\mathcal{A}_{L}$ is $\otimes$-complemented in $\mathcal{A}_{U}$ or $\mathcal{A}_{S}$ is $\otimes$-complemented in $\mathcal{A}_{G/L}$, then $\mathcal{A}$ is schurian by Lemma~\aref{A-otimescomplement}. 

Now suppose that Statement~$(4)$ holds, i.e. $H$ and $P$ are $\mathcal{A}$-subgroups and $|U|=|G/L|=4p$. In this case, $S\cong C_{2p}$. As $H$ and $P$ are $\mathcal{A}$-subgroups, $H\cap U$ and $P$ are $\mathcal{A}_U$-subgroups of orders~$4$ and~$p$, respectively, and $H/L$ and $P/L$ are $\mathcal{A}_{G/L}$-subgroups of orders~$4$ and~$p$, respectively. Therefore $\mathcal{A}_U$ and $\mathcal{A}_{G/L}$ are cyclotomic by~\cite[Case~3,~p.~115]{EKP2} and hence $\mathcal{A}_S$ is also cyclotomic. Moreover, $\mathcal{A}_S$ is Cayley minimal by Lemma~\aref{A-cyclcayleymin}. Thus, $\mathcal{A}$ is schurian by Lemma~\ref{cayleymin}.

\section{$S$-rings over $C_3\times C_{3^k}$}

Let $k\geq 2$, $A\cong C_{3^k}$, $B\cong C_3$, and $G=B \times A \cong C_3\times C_{3^k}$. By the \emph{radical} $\rad(\mathcal{A})$ of an $S$-ring $\mathcal{A}$ over $G$, we mean the group generated by the groups $\rad(X)$, where $X$ runs over all basic sets of $\mathcal{A}$ containing an element of order~$3^k$. The lemma below providing a characterization of $S$-rings over $G$ collects information from~\cite[Theorem~4.1,~Theorem~5.1,~Theorem~6.1,~Lemma~6.1]{Ry2}.

\begin{lemm}\label{c3c3k}
Let $\mathcal{A}$ be a nontrivial $S$-ring over $G$. Then one of the following statements holds:
\begin{enumerate}

\tm{1} $|\rad(\mathcal{A})|=1$ and $\mathcal{A}=\mathcal{A}_L\otimes \mathcal{T}_U$, where $L$ and $U$ are $\mathcal{A}$-subgroups of orders~$3$ and~$3^k$, respectively;

\tm{2} $|\rad(\mathcal{A})|=1$ and $\mathcal{A}$ is cyclotomic;

\tm{3}  $|\rad(\mathcal{A})|>1$ and $\mathcal{A}$ is a nontrivial $U/L$-wreath product for some $\mathcal{A}$-section $U/L$ such that $|\rad(\mathcal{A}_U)|=1$.

\end{enumerate}
\end{lemm}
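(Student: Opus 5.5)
This lemma only collects results of~\cite{Ry2}, so the plan is to extract the three alternatives from the structure theory there and check that the definitions agree. We split according to whether $\rad(\mathcal{A})$, defined here via basic sets containing an element of order~$3^k$, is trivial.

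\emph{Case $|\rad(\mathcal{A})|=1$.} We apply \cite[Theorem~4.1]{Ry2} and \cite[Theorem~5.1]{Ry2}, which describe $S$-rings over $C_3\times C_{3^k}$ with trivial radical. In that situation the basic sets through generators of order $3^k$ have trivial radical, and such an $S$-ring is either cyclotomic (Statement~$(2)$) or decomposes as a tensor product of an $S$-ring over a subgroup $L$ of order~$3$ and an $S$-ring over a cyclic $\mathcal{A}$-subgroup $U$ of order~$3^k$. In the latter case, the factor $\mathcal{A}_U$ is an $S$-ring over a cyclic $3$-group with trivial radical. To get Statement~$(1)$ we must show that $\mathcal{A}_U$ is the trivial $S$-ring $\mathcal{T}_U$. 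If the cited theorem does not state this, we argue as follows. A nontrivial, non-wreath $S$-ring over a cyclic $p$-group with trivial radical is cyclotomic, by the Leung--Man classification. If $\mathcal{A}_U$ is cyclotomic, then so is $\mathcal{A}_L\otimes\mathcal{A}_U$, because $\mathcal{A}_L$ is cyclotomic by Lemma~\aref{A-cyclprime}. That outcome falls under Statement~$(2)$. So Statement~$(1)$ only needs to record the case $\mathcal{A}_U=\mathcal{T}_U$.

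\emph{Case $|\rad(\mathcal{A})|>1$.} We use \cite[Theorem~6.1]{Ry2} together with \cite[Lemma~6.1]{Ry2}. Take $L$ to be a minimal nontrivial $\mathcal{A}$-subgroup inside $\rad(\mathcal{A})$. Let $U$ be the $\mathcal{A}$-subgroup generated by all basic sets $X$ with $L\nleq\rad(X)$. Every basic set containing an element of order $3^k$ has $L$ in its radical, so $U$ is proper, and $\mathcal{A}$ is the nontrivial $U/L$-wreath product. The condition $|\rad(\mathcal{A}_U)|=1$ is obtained by choosing the section extremally, in the spirit of Lemma~\ref{cyclpwreath}(1). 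That is, take $L$ least, and take $U$ as small as possible among sections for which $\mathcal{A}$ is a $U/L$-wreath product. If $\mathcal{A}_U$ had nontrivial radical, then basic sets of $\mathcal{A}_U$ through elements of maximal order in $U$ would contain $L$ in their radicals. This would let us shrink $U$ further, contradicting minimality. \cite[Lemma~6.1]{Ry2} is used exactly to guarantee this shrinking step. One also has to account for the subtlety that when $U$ is not of the form $C_3\times C_{3^{k'}}$ the radical is taken in the cyclic sense.

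The main obstacle is bookkeeping rather than mathematics. The notion of radical in~\cite{Ry2} must match the one defined here, and the degenerate case where $U$ is cyclic or has small order must be handled. I would first verify that the statements of \cite[Theorems~4.1, 5.1, 6.1]{Ry2} are formulated with this exact definition of radical. Then the three statements follow directly from those theorems.
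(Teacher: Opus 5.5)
Your overall route matches the paper's. The paper gives no argument for this lemma and simply assembles it from \cite[Theorems~4.1, 5.1, 6.1, Lemma~6.1]{Ry2}. Your handling of the case $|\rad(\mathcal{A})|=1$ is sound: the basic sets $\{e\}\times X_U$ through generators of $U$ force $\mathcal{A}_U$ to have trivial radical, so $\mathcal{A}_U$ is $\mathcal{T}_U$ or normal cyclotomic, and in the second case $\mathcal{A}_L\otimes\mathcal{A}_U$ is cyclotomic. The problem is the self-contained sketch you give for $|\rad(\mathcal{A})|>1$. It is not a valid derivation of Statement~$(3)$, and if it is meant to replace the citation, the proof has a gap.

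\emph{First gap.} The claim that every basic set containing an element of order $3^k$ has $L$ in its radical is false, because $\rad(\mathcal{A})$ is only \emph{generated} by these radicals. Take $\mathcal{A}=\mathcal{T}_A\wr\mathcal{T}_{G/A}$, with basic sets $\{e\}$, $A^\#$, $G\setminus A$. Then $\rad(\mathcal{A})=A$ and the only nontrivial $\mathcal{A}$-subgroup in it is $L=A$, yet $A^\#\ni a$ has trivial radical. So your stated reason for $U$ being proper is unfounded.

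\emph{Second gap.} Your extremal choice imitates Lemma~\ref{cyclpwreath}(1), whose proof uses two facts special to cyclic $p$-groups. Subgroups form a chain, so a least nontrivial $\mathcal{A}$-subgroup exists and lies in every nontrivial radical. All basic sets through generators are rationally conjugate, so they share one radical. In $C_3\times C_{3^k}$ both facts fail: there are four subgroups of order $3$, and the elements of order $3^k$ split into three rational classes. Hence a nontrivial $\rad(\mathcal{A}_U)$ need not contain your $L$.

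Concretely, for $k\geq 3$ put $V=B\times A^3$ and $E=B\times A_1$. Let $\mathcal{A}=\mathcal{A}_V\wr\mathcal{T}_{G/V}$, where the basic sets of $\mathcal{A}_V$ are the singletons of $E$ and the $B$-cosets in $V\setminus E$. All four subgroups of order $3$ of $E$ are minimal $\mathcal{A}$-subgroups of $\rad(\mathcal{A})=V$, so ``take $L$ least'' is meaningless here. If you choose $L=A_1$, your construction gives $U=V$ with $\rad(\mathcal{A}_V)=B$. No shrinking is possible with $L$ fixed, because those $B$-cosets have radical $B\not\geq A_1$ and generate $V$. One must change $L$ itself, to $L=B$ with $U=E$, to reach Statement~$(3)$.

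Choosing the pair $(U,L)$ correctly is exactly what \cite[Theorem~6.1, Lemma~6.1]{Ry2} supplies. Statement~$(3)$ therefore has to be taken from there as a black box, as the paper does, and not from your reconstruction.
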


The subgroup of $A$ of order~$3$ is denoted by~$A_1$. Clearly, $A_1$ is a unique characteristic subgroup of $G$ of order~$3$. The image of $H\leq G$ under the canonical epimorphism from $G$ to $G/A_1$ is denoted by $\overbar{H}$.

\begin{lemm}\label{quotc3c3k}
Let $\mathcal{A}$ be an $S$-ring over $G$ satisfying Statement~$(2)$ of Lemma~\ref{c3c3k}, and $L$ an $\mathcal{A}$-subgroup of order~$3$. Then one of the following statements holds:
\begin{enumerate}

\tm{1} $\mathcal{A}_{G/L}$ is $2$-minimal;

\tm{2} $L=A_1$ and $\rad(\mathcal{A}_{G/L})=\overbar{B}$. 
\end{enumerate}

\end{lemm}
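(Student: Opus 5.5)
We have $\mathcal{A}=\cyc(M,G)$ for some $M\leq\aut(G)$ with $|\rad(\mathcal{A})|=1$, and $L$ is an $\mathcal{A}$-subgroup of order~$3$. The quotient $G/L$ is isomorphic either to $C_{3^k}$ or to $C_3\times C_{3^{k-1}}$, and $\mathcal{A}_{G/L}=\cyc(M^{G/L},G/L)$ is again cyclotomic. The plan is to use the known characterization of $2$-minimal cyclotomic $S$-rings, namely Lemma~\aref{A-2minsmall} and the related criteria from~\cite{Ry5}. These say that a cyclotomic $S$-ring over a cyclic $p$-group, or over $C_3\times C_{3^m}$, is $2$-minimal whenever its radical is trivial and no obstruction of wreath type occurs. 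So the task reduces to controlling $\rad(\mathcal{A}_{G/L})$.

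\textbf{Case $L\neq A_1$.} Since $A_1$ is the unique characteristic subgroup of order~$3$, the subgroup $L$ is a complement to $A$. The projection $G\to G/L$ is then injective on $A$, and every basic set $X$ of $\mathcal{A}$ containing an element of order $3^k$ maps onto a basic set of $\mathcal{A}_{G/L}$ consisting of generators of $G/L\cong C_{3^k}$. I would first show $|\rad(X^\pi)|=1$. If $\rad(X^\pi)$ were nontrivial, it would contain the subgroup of order~$3$ of $G/L$, which is the image of $A_1L$. Then $X$ would be a union of $A_1L$-cosets intersected appropriately, and Lemma~\aref{A-intersection} together with the orbit structure (the $M$-orbit of a generator $x$ is $x^{M}$) would force $A_1\leq\rad(X)$ or $L\leq\rad(X)$, contradicting $|\rad(\mathcal{A})|=1$. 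Having established this, $\mathcal{A}_{G/L}$ is a cyclotomic $S$-ring over a cyclic $3$-group with trivial radical. Such an $S$-ring is $\cyc(K,C_{3^k})$ with $K$ acting semiregularly on generators, and it is $2$-minimal by Lemma~\aref{A-2minsmall}. This gives Statement~$(1)$.

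\textbf{Case $L=A_1$.} Here $G/L=\overbar{B}\times\overbar{A}\cong C_3\times C_{3^{k-1}}$. If $\rad(\mathcal{A}_{G/L})$ is trivial, then $\mathcal{A}_{G/L}$ is a cyclotomic $S$-ring with trivial radical over $C_3\times C_{3^{k-1}}$ (or over $E_9$ when $k=2$), and I would apply the $2$-minimality criterion for such $S$-rings from~\cite{Ry2,Ry5} to get Statement~$(1)$. If the radical is nontrivial, it is generated by radicals of basic sets containing elements of order $3^{k-1}$, so it is a subgroup of order~$3$ of $G/L$. I would then rule out every order-$3$ subgroup other than $\overbar{B}$, namely the image $\overbar{A_2}$ of the order-$9$ subgroup of $A$ and the diagonal ones. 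The method is to lift. If $\overbar{A_2}\leq\rad(X^\pi)$, then $X$ is invariant under multiplication by $A_2$, because $X$ consists of elements $x$ of order $3^k$ with $x^{M}$ closed under $x\mapsto x^{1+3^{k-1}}$ and related maps. That would give $A_1\leq\rad(X)$, contradicting $|\rad(\mathcal{A})|=1$. The diagonal subgroups $\langle \overbar{b}\,\overbar{a}^{3^{k-2}}\rangle$ lift similarly to radicals containing an order-$3$ subgroup of $A_1B$ different from $B$. Thus $\rad(\mathcal{A}_{G/L})=\overbar{B}$, which is Statement~$(2)$.

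I expect the main obstacle to be this lifting step: showing that a nontrivial radical in the quotient forces a nontrivial radical upstairs, unless it equals $\overbar{B}$. I would handle it concretely by writing $M\leq\aut(G)$ in matrix form acting on $B\times A$. Elements fixing $x=ba'$, with $a'$ a generator of $A$, modulo $L$ correspond to automorphisms $x\mapsto x\,l$ with $l\in L$. Comparing these with the stabilizer in $M$ of $X$ should show which order-$3$ radicals downstairs can occur. For $\overbar{B}$ the lift need not exist, because the automorphism $b\mapsto b a^{3^{k-1}}$ is compatible with a trivial radical upstairs; this is exactly why the exceptional case appears.
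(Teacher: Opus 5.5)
\textbf{The gap.} Your argument does not close in the case $L=A_1$ with $\rad(\mathcal{A}_{G/A_1})$ trivial. There you invoke ``the $2$-minimality criterion for such $S$-rings from [Ry2, Ry5]'', but no such general criterion is available:
\begin{itemize}
\item Lemma~\aref{A-2minsmall}, as it is used throughout this paper, only covers sections of very small order (at most~$4$).
\item The normality route (Lemma~\aref{A-leungman} followed by Lemma~\aref{A-2minnorm}) works only over cyclic $3$-groups.
\end{itemize}
Here $\mathcal{A}_{G/A_1}$ lives on $C_3\times C_{3^{k-1}}$ (on $E_9$ if $k=2$). Triviality of its radical does not by itself give normality or $2$-minimality.

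The paper handles exactly this case with the explicit classification. By [Ry2, Theorem~5.1], $\mathcal{A}=\cyc(K,G)$ with $K$ one of $K_0,\dots,K_9$ from [Ry2, Table~1].
\begin{itemize}
\item For $K\in\{K_0,\dots,K_5\}$, every quotient $\mathcal{A}_{G/L}$ by an $\mathcal{A}$-subgroup of order~$3$ is $2$-minimal by [Ry2, Corollary~5.2].
\item For $K\in\{K_6,\dots,K_9\}$, inspection of the table shows that $A_1$ is the only $\mathcal{A}$-subgroup of order~$3$ and $\rad(\mathcal{A}_{G/A_1})=\overbar{B}$.
\end{itemize}
Your implication ``trivial quotient radical $\Rightarrow$ Statement~(1)'' is true only because a trivial quotient radical forces $K\in\{K_0,\dots,K_5\}$. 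So this input cannot be avoided, and without it your proof is incomplete.

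\textbf{Smaller points.}

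First, in the case $L\neq A_1$, Lemma~\aref{A-2minsmall} is again the wrong reference. The correct chain is: $\mathcal{A}_{G/L}$ is cyclotomic over $C_{3^k}$ with $k\geq 2$, hence nontrivial. So a trivial radical makes it normal by Lemma~\aref{A-leungman}, and then Lemma~\aref{A-2minnorm} gives $2$-minimality, as in the proof of Lemma~\ref{cycl3section}.

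Second, your lifting claim that a nontrivial $\rad(X^\pi)$ forces $A_1\leq\rad(X)$ or $L\leq\rad(X)$ is inaccurate. Take $L=B$ and $\sigma\colon a\mapsto a^{1+3^{k-1}}b,\ b\mapsto b$. Then $a^{\langle\sigma\rangle}=a\langle a^{3^{k-1}}b\rangle$, which has a diagonal radical. The correct computation is as follows. Write $G=\langle x\rangle\times L$ and take $\sigma\in M$ with $\sigma(x)=xc$, where $c\in x^{3^{k-1}}L$ is nontrivial.
\begin{itemize}
\item If $\sigma$ fixes $L$ pointwise, then $\sigma(y)=yc^{u}$ for every $y=x^{u}l\in X$.
\item If $\sigma$ inverts $L$, then $\sigma^2(y)=y\,x^{2\cdot 3^{k-1}u}$.
\end{itemize}
In either case some subgroup of order~$3$ of $\Omega_1(G)$ lies in $\rad(X)$, contradicting $|\rad(\mathcal{A})|=1$.

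A similar power-of-$\sigma$ argument does work for $L=A_1$ with a nontrivial quotient radical. However, you must show that each $\rad(X^\pi)$ is trivial or equal to $\overbar{B}$, rather than simply asserting that the radical has order~$3$.
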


\begin{proof}
From~\cite[Theorem~5.1]{Ry2} it follows that $\mathcal{A}=\cyc(K,G)$, where $K\leq \aut(G)$ is one of the groups $K_0$-$K_9$ from~\cite[Table~1]{Ry2}. If $K\in\{K_0,\dots,K_5\}$, then $\mathcal{A}_{G/L}$ is $2$-minimal by~\cite[Corollary~5.2]{Ry2}. If $K\in \{K_6,K_7,K_8,K_9\}$, then from the definition of $K$ (see~\cite[Table~1]{Ry2}) it follows that $A_1$ is a unique $\mathcal{A}$-subgroup of order~$3$, i.e. $L=A_1$, and $\rad(\mathcal{A}_{G/A_1})=\overbar{B}$.  
\end{proof}

\begin{lemm}\label{radicalsc3c3k}
Let $\mathcal{A}$ be an $S$-ring over $G$ and $X\in \mathcal{S}(\mathcal{A})$ such that $\langle X \rangle\lneq G$ and $\rad(X)$ has a noncharacteristic subgroup $L$ of order~$3$. Then $\rad(Y)\geq L$ for every $Y\in \mathcal{S}(\mathcal{A})_{G\setminus \langle X \rangle}$. 
\end{lemm}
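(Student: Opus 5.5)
Let $N=\langle X\rangle$, an $\mathcal{A}$-subgroup with $N\lneq G$, and put $L\leq\rad(X)\leq N$. The plan is to show that for every $Y\in \mathcal{S}(\mathcal{A})_{G\setminus N}$ the product $\underline{X}\,\underline{Y}$, or its structure, forces $L\leq\rad(Y)$.

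The first step is structural. The noncharacteristic subgroups of order~$3$ in $G=B\times A$ are exactly the order-$3$ subgroups different from $A_1$. They lie in $\Omega_1(G)=B\times A_1\cong E_9$.

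Next we describe $N$. Since $N$ contains $L\neq A_1$ and $N$ is proper, $N$ is not cyclic. Hence $N\cong C_3\times C_{3^j}$ with $j<k$, or $N\cong E_9$ when $j=1$. Every element $y\in G\setminus N$ then has order at least $3^{j+1}>\exp$ of the elements of $N$, or else lies in a larger subgroup.

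The key computation is as follows. Take $Y\in\mathcal{S}(\mathcal{A})_{G\setminus N}$ and consider $\underline{X}\,\underline{Y}$. Because $L\leq\rad(X)$, we have $\underline{X}\,\underline{Y}=\underline{L}\cdot(\text{something})$, so $\underline{X}\,\underline{Y}$ is $L$-invariant. I would like to decompose $\underline{X}\,\underline{Y}$ as a sum of basic sets lying in $G\setminus N$ (true, since $XY\subseteq G\setminus N$ as $X\subseteq N$). Then I would show that $Y$ itself appears with its full coefficient pattern. The cleanest route is the standard Schur–Wielandt argument. The set $Y'=\{y\in Y: yL\subseteq Y\}$ is an $\mathcal{A}$-set, and it suffices to show it is nonempty. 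To get nonemptiness, I would instead aim to show that $L\leq\rad(Y)$ using a Burnside-type power map (Lemma~\aref{A-burn}). Choose $m$ with $m\equiv 1\pmod{3^{j}}$ but acting nontrivially on $G\setminus N$, in such a way that $x\mapsto x^m$ fixes $N$ and sends $y\mapsto y\ell$ with $\ell\in L$. In a noncharacteristic direction this is impossible via power maps alone, since $y^m-y$ lies in $A$-direction subgroups and yields only $A_1$-shifts.

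Because of this obstruction I expect the main difficulty to lie in handling noncharacteristic $L$. Power maps only produce shifts by characteristic subgroups such as $A_1$, so the real input must come from the classification. My fallback is to invoke Lemma~\ref{c3c3k} applied to $\mathcal{A}_{N'}$, where $N'\geq N$ is the $\mathcal{A}$-subgroup generated by $N$ and some $Y$, together with the known description of basic sets from~\cite{Ry2}. I would split into the three cases of Lemma~\ref{c3c3k}. In case~(1) and case~(2) the radical is trivial; there I would show that no basic set inside a proper subgroup can have a noncharacteristic radical unless that subgroup is a tensor factor, in which case the wreath-type conclusion follows directly. Otherwise the statement is vacuous, or contradicts the tables of cyclotomic rings $K_0$--$K_9$. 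In case~(3) the $U/L'$-wreath structure gives $L'\leq\rad(Y)$ for $Y$ outside $U$, and I would argue that $L\leq L'$, because $L'$ contains every order-$3$ radical subgroup appearing inside. This matching of $L$ with $L'$, via the least $\mathcal{A}$-subgroup argument (as in Lemma~\ref{cyclpwreath}(1)), is the step I expect to require the most care.
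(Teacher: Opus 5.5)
Your proposal has a genuine gap. The case that carries all the content is settled by an assertion that is essentially the lemma itself.

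The preliminary ideas do not help. The product $\underline{X}\,\underline{Y}$ is $L$-invariant for \emph{every} $Y$, simply because $\underline{X}=\underline{L}\,\underline{X_0}$, so it says nothing about $Y$. The set $\{y\in Y: yL\subseteq Y\}$ is an $\mathcal{A}$-set only if $L$ is an $\mathcal{A}$-subgroup, and that is not assumed (only $\rad(X)$ is one). Power maps only shift by characteristic subgroups, as you note yourself.

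In your fallback, cases (1) and (2) of Lemma~\ref{c3c3k} applied to $\mathcal{A}_{N'}$ are fine in spirit. What they give is simply a contradiction: $X$ is a basic set of $\mathcal{A}_{N'}$ whose radical contains a noncharacteristic subgroup of order~$3$. In case (1) all radicals are trivial, and in case (2) every radical lies in a characteristic subgroup of order~$3$ by \cite[Theorem~5.1]{Ry2}. The remark about a tensor factor and a ``wreath-type conclusion'' has no content here.

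Case (3) is where the argument fails. Lemma~\ref{c3c3k}(3) only gives some $U'/L'$-wreath decomposition of $\mathcal{A}_{N'}$, and nothing ties $L'$ to $L$.
\begin{itemize}
\item If $X\subseteq U'$, the decomposition puts no condition on $\rad(X)$.
\item If $X\not\subseteq U'$, you only get $L'\le\rad(X)$, which is the wrong direction.
\item Nothing prevents $L'=A_1$, and then $A_1\le\rad(Y)$ tells you nothing about $L$.
\item Even $Y\not\subseteq U'$ is not automatic when $U'$ contains elements of maximal order.
\end{itemize}
Your claim that ``$L'$ contains every order-$3$ radical subgroup appearing inside'' is unproved, and it is exactly what has to be shown.

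The missing idea is to remove the radical by passing to a quotient, so that only the trivial-radical cases remain. The paper argues by contradiction. Suppose $L\nleq\rad(Y)$, and put $N=\rad(Y)$ and $U=\langle Y\rangle$.
\begin{itemize}
\item $U$ contains an element of order larger than $\exp(V)$, where $V=\langle X\rangle$. Hence $U\cap V$ contains a cyclic subgroup $C$ whose subgroup of order $3$ is $A_1\neq L$, so $CL=V$.
\item Therefore every $L$-coset of $V$ meets $U$, which gives $X\subseteq U$; in particular $U$ is your $N'$.
\item The paper then checks that $\rad(\mathcal{A}_U)=N$, so $\mathcal{A}_{U/N}$ has trivial radical and $|U/N|\ge 27$.
\item The image $\widetilde X$ is still a basic set whose radical contains the image $\widetilde L$ of order~$3$.
\end{itemize}
Now only cases (1)--(2) of Lemma~\ref{c3c3k} are available for $\mathcal{A}_{U/N}$. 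The same contradiction you already have for those cases, via \cite[Theorem~5.1]{Ry2}, finishes the proof. Without this reduction, or an equivalent induction through a quotient, case (3) of your plan cannot be completed.
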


\begin{proof}
Let $V=\langle X \rangle$. As $\rad(X)\geq L$, we have $|V|\geq 9$. Since $L\leq \rad(X)$, the group $V$ is isomorphic to $C_3\times C_{3^l}$ for some $l\in\{1,\ldots,k-1\}$. Assume the contrary that there is $Y\in \mathcal{S}(\mathcal{A})_{G\setminus V}$ such that $L\nleq\rad(Y)$. Put $N=\rad(Y)$ and $U=\langle Y \rangle$. As $Y$ lies outside $V$, we conclude that $Y$ consists of elements of order greater than $3^l$. Together with $L\leq \rad(X)$, this yields that $X\cap U\neq \varnothing$ and hence $X\subseteq U$. The latter implies that $U\cong C_3\times C_{3^m}$ for some $m\in\{l+1,\ldots,k\}$. So every basic set of $\mathcal{A}_U$ containing an element of order $3^m$ is rationally conjugate to $Y$ by Lemma~\aref{A-burn} and consequently has the same radical~$N$. Therefore $\rad(\mathcal{A}_U)=N$.

The image of $T\subseteq G$ under the canonical epimorphism from $G$ to $G/N$ is denoted by $\widetilde{T}$. By the assumption, $N\ngeq L$. Therefore $U/N$ is noncyclic and 
\begin{equation}\label{radquot}
\rad(\widetilde{X})\geq \widetilde{L}. 
\end{equation}
In view of $\widetilde{V}<\widetilde{U}$ and Eq.~\eqref{radquot}, we have that $|\widetilde{U}|\geq 27$ and hence Lemma~\ref{c3c3k} holds for $\mathcal{A}_{\widetilde{U}}$. Note that $|\rad(\mathcal{A}_{\widetilde{U}})|=1$ because $\rad(\mathcal{A}_U)=N$. So one of Statements~$(1)$-$(2)$ of Lemma~\ref{c3c3k} holds for $\mathcal{A}_{\widetilde{U}}$. If Statement~$(1)$ holds, then a radical of every basic set of $\mathcal{A}_{\widetilde{U}}$ is trivial, a contradiction to Eq.~\eqref{radquot}. If Statement~$(2)$ holds, then  a description of all cyclotomic $S$-rings with trivial radical over $\widetilde{U}$ from~\cite[Theorem~5.1]{Ry2} implies that a radical of each basic set of $\mathcal{A}_{\widetilde{U}}$ is contained in a characteristic subgroup of order~$3$, a contradiction to Eq.~\eqref{radquot}. 
\end{proof}

\section{$S$-rings over $C_{6}\times C_{3^k}$ and $E_9 \times C_{2q}$}

In this section, we characterize $S$-rings over the groups $C_{6}\times C_{3^k}$ and $E_9 \times C_{2q}$. Let $G\cong C_6 \times C_{3^k}$ for some integer $k\geq 1$ or $G\cong E_9\times C_{2q}$ for some odd prime $q$. Clearly, $G=H\times P$, where $H$ is a Hall $2^\prime$-subgroup of $G$ and $P\cong C_2$.

\begin{theo}\label{2odd}
Let $\mathcal{A}$ be a nontrivial $S$-ring over $G$, $H_1$ a maximal $\mathcal{A}$-subgroup contained in $H$, and $P_1$ the least $\mathcal{A}$-subgroup containing $P$. Then one of the following statements holds:
\begin{enumerate}
\tm{1} $H_1P_1=G$, $|H_1|>1$, and $\mathcal{A}=\mathcal{A}_{H_1}\star\mathcal{A}_{P_1}$;

\tm{2} $|H_1|>1$ and $\mathcal{A}=\mathcal{A}_{H_1}\wr \mathcal{T}_{G/H_1}$;

\tm{3} $\mathcal{A}$ is the nontrivial $(H_1P_1)/P_1$-wreath product and $\mathcal{A}_{P_1}$ is $\otimes$-complemented in $\mathcal{A}_{H_1P_1}$;

\tm{4} $\mathcal{A}$ is the nontrivial $(H_1P_1)/P_1$- and $H_1/(H_1\cap P_1)$-wreath products.

\end{enumerate} 
\end{theo}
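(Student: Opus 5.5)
The starting point is Lemma~\aref{A-nonpowernew1}, exactly as in the proof of Proposition~\ref{hnot8p}. Here $G=H\times P$ with $P\cong C_2$ and $|H|$ odd, so $|P|$ is a prime coprime to $|H|$. First I would dispose of the case $H_1=H$: then $H$ is an $\mathcal{A}$-subgroup of index~$2$. If $P$ is also an $\mathcal{A}$-subgroup, then $P_1=P$ and $\mathcal{A}=\mathcal{A}_H\star\mathcal{A}_P$ with $H\cap P$ trivial, which is a tensor product and gives Statement~$(1)$. If $P$ is not an $\mathcal{A}$-subgroup, then $P_1=G$ and $G\setminus H$ is a union of basic sets. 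I would then show that $\mathcal{A}=\mathcal{A}_H\wr\mathcal{T}_{G/H}$, which is Statement~$(2)$; the subcase $|H_1|=1$ cannot occur because $\mathcal{A}$ is nontrivial.

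When $H_1<H$, Lemma~\aref{A-nonpowernew1} gives three alternatives.
\begin{enumerate}
\tm{a} $H_1P_1=G$ and $\mathcal{A}=\mathcal{A}_{H_1}\star\mathcal{A}_{P_1}$. This is Statement~$(1)$ once we know $|H_1|>1$. If $H_1$ were trivial, we would get $P_1=G$ and $\mathcal{A}=\mathcal{A}_{P_1}$, and I would check that this conflicts with how $H_1P_1=G$ arises in that lemma (there $P_1\lneq G$). Otherwise I would fold this case into~(b).
\tm{b} $\mathcal{A}=\mathcal{A}_{H_1}\wr\mathcal{T}_{G/H_1}$. Here $|H_1|>1$ because $\mathcal{A}$ is nontrivial, so this is Statement~$(2)$.
\tm{c} $\mathcal{A}$ is the nontrivial $(H_1P_1)/P_1$-wreath product. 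This is the substantive case.
\end{enumerate}

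In case~(c), set $V=H_1P_1$ and note that $P_1\cap H$ is an $\mathcal{A}$-subgroup with $P_1=(P_1\cap H)\times P$. I would analyse $\mathcal{A}_V$, an $S$-ring over $V=H_1\times P$ times the odd part of $P_1$, in which $H_1$ is a maximal $\mathcal{A}$-subgroup of $H\cap V$. The aim is a dichotomy: either $\mathcal{A}_{P_1}$ is $\otimes$-complemented in $\mathcal{A}_V$ (Statement~$(3)$), or every basic set of $\mathcal{A}_V$ outside $H_1$ has radical containing $H_1\cap P_1$, so that $\mathcal{A}_V$ is the $H_1/(H_1\cap P_1)$-wreath product. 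In the second alternative, basic sets of $\mathcal{A}$ outside $V$ already have radical containing $P_1\supseteq H_1\cap P_1$, so $\mathcal{A}$ itself is the $H_1/(H_1\cap P_1)$-wreath product. If $H_1\cap P_1$ is nontrivial this gives Statement~$(4)$; if it is trivial, $P_1=P$ and $V=H_1\times P$, and a tensor decomposition should give Statement~$(3)$.

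To get the dichotomy, I would apply Lemma~\ref{basesethp} to basic sets $X$ of $\mathcal{A}_V$ with $X\subseteq (H_1P_1)\setminus(H_1\cup P_1)$. With $|P|=2$, $X_P=\{p\}$ is a single element, so $|\Pi_P|=1$. Hence $X=X_{H}\times\{p\}$ after projecting along the coprime decomposition, and for each such $X$ either $X\subseteq H_1\times P$ or $X$ meets $(P_1\cap H)P$. In the tensor case one obtains $\mathcal{A}_V=\mathcal{A}_{H_1}\otimes\mathcal{A}_{P_1}$ when $H_1\cap P_1=1$. I expect the main obstacle to be the case $H_1\cap P_1\neq 1$ with $P$ not an $\mathcal{A}$-subgroup. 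There I would use Lemma~\aref{A-burn} with a multiplier $m\equiv1\pmod{|H|}$ and $m$ even, so that $x\mapsto x^m$ kills $P$, to compare $X$ with its $H$-projection. I would also invoke the minimality of $P_1$ to force $H_1\cap P_1\le\rad(X)$. For $H\cong C_3\times C_{3^k}$, Lemmas~\ref{c3c3k} and~\ref{radicalsc3c3k} would control the radicals on the odd side.
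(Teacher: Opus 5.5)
There is a genuine gap: the substantive cases rest on claims that are false in general, and the tools you propose cannot replace the missing ingredient.

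\textbf{False steps.} Your case $H_1=H$ with $P$ not an $\mathcal{A}$-subgroup is wrong as stated: in general neither $P_1=G$ nor $\mathcal{A}=\mathcal{A}_H\wr\mathcal{T}_{G/H}$ holds. Take $k=1$ and write $G=\langle a\rangle\times\langle c\rangle\times\langle b\rangle$ with $|a|=|c|=3$, $|b|=2$, $H=\langle a,c\rangle$, $P=\langle b\rangle$. Let $\mathcal{A}$ be the tensor product of $\mathbb{Z}\langle c\rangle$ with the $S$-ring over $\langle a,b\rangle$ whose basic sets are $\{e\}$, $\{a,a^2\}$, $\{b,ab,a^2b\}$.
\begin{itemize}
\item Then $H_1=H$ and $P_1=\langle a,b\rangle\neq G$.
\item $G\setminus H$ splits into the three basic sets $\{b,ab,a^2b\}c^i$, so Statement~$(2)$ fails.
\item The theorem holds here only through Statement~$(1)$, i.e.\ through the star product $\mathcal{A}_H\star\mathcal{A}_{P_1}$ over $H\cap P_1=\langle a\rangle\neq 1$.
\end{itemize}
In case~(c) you also assert that $P_1\cap H$ is an $\mathcal{A}$-subgroup, so that $H_1\cap P_1=1$ forces $P_1=P$; this is false too. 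Take $k=2$, $G=\langle a\rangle\times\langle d\rangle\times\langle b\rangle$ with $|d|=9$, $H_1=\langle d^3\rangle$, $P_1=\langle a,b\rangle$, and $\mathcal{A}=(\mathcal{T}_{H_1}\otimes\mathcal{T}_{P_1})\wr\mathcal{T}_{G/H_1P_1}$.
\begin{itemize}
\item Its $\mathcal{A}$-subgroups are exactly $1,H_1,P_1,H_1P_1,G$.
\item So you are in case~(c) with $H_1\cap P_1=1$.
\item Yet neither $P$ nor $P_1\cap H=\langle a\rangle$ is an $\mathcal{A}$-subgroup, and $\mathcal{A}_{H_1P_1}$ is not dense.
\end{itemize}

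\textbf{The missing ingredient.} What you lack is the fact the paper starts from, Lemma~\aref{A-nonpowernew2}: $\mathcal{A}_{H_1P_1}=\mathcal{A}_{H_1}\star\mathcal{A}_{P_1}$. With it the proof is short.
\begin{itemize}
\item If $H_1=H$, then $H_1P_1=G$ and Statement~$(1)$ holds at once; this covers the first example.
\item If $H_1<H$, one runs through Lemma~\aref{A-nonpowernew1} as you do. In its third alternative the star product is $\mathcal{A}_{H_1}\otimes\mathcal{A}_{P_1}$ when $H_1\cap P_1=1$, giving Statement~$(3)$. Otherwise it is the nontrivial $H_1/(H_1\cap P_1)$-wreath product, and this passes to $\mathcal{A}$ because basic sets outside $H_1P_1$ have $P_1$ in their radicals, giving Statement~$(4)$.
\end{itemize}

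\textbf{Why your substitutes do not prove this.}
\begin{itemize}
\item Lemmas~\ref{basesethp} and~\ref{orbitspip} assume the $S$-ring is dense, in particular that $P$ is an $\mathcal{A}$-subgroup. That fails whenever $P_1\neq P$, hence always when $H_1\cap P_1\neq 1$, and sometimes even when $H_1\cap P_1=1$, as the second example shows.
\item Lemma~\aref{A-burn} applies only to multipliers $m$ coprime to $|G|$, and $|G|$ is even, so your even $m$ is not available.
\item Lemmas~\ref{c3c3k} and~\ref{radicalsc3c3k} concern only $S$-rings over $C_3\times C_{3^k}$. They do not cover $E_9\times C_{2q}$, and they say nothing about how basic sets in $H_1P_1\setminus H_1$ interact with $P_1$.
\end{itemize}
As written, the case yielding Statement~$(4)$, which you yourself flag as the main obstacle, remains unproved.
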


\begin{proof}
Due to Lemma~\aref{A-nonpowernew2}, we have $\mathcal{A}_{H_1P_1}=\mathcal{A}_{H_1}\star\mathcal{A}_{P_1}$. If $H_1=H$, then $H_1P_1=G$ and Statement~$(1)$ of the theorem holds as required. So we may assume that $H_1\lneq H$. Then Lemma~\aref{A-nonpowernew1} holds for $\mathcal{A}$. If Statement~$(1)$ of Lemma~\aref{A-nonpowernew1} holds for $\mathcal{A}$, i.e. $H_1P_1=G$, $P_1\lneq G$, and $\mathcal{A}=\mathcal{A}_{H_1} \star \mathcal{A}_{P_1}$, then Statement~$(1)$ of the theorem holds as desired. Suppose that Statement~$(2)$ of Lemma~\aref{A-nonpowernew1} holds for $\mathcal{A}$, i.e. $\mathcal{A}=\mathcal{A}_{H_1}\wr \mathcal{A}_{G/H_1}$ with $\mathcal{A}_{G/H_1}=\mathcal{T}_{G/H_1}$. Together with the condition that $\mathcal{A}$ is nontrivial, this yields that Statement~$(2)$ of the theorem holds as desired.  

Suppose that Statement~$(3)$ of Lemma~\aref{A-nonpowernew1} holds for $\mathcal{A}$, i.e. $\mathcal{A}$ is the nontrivial $(H_1P_1)/P_1$-wreath product. If $H_1\cap P_1$ is trivial, then $\mathcal{A}_{H_1P_1}=\mathcal{A}_{H_1}\star\mathcal{A}_{P_1}=\mathcal{A}_{H_1}\otimes\mathcal{A}_{P_1}$. This means that $\mathcal{A}_{P_1}$ is $\otimes$-complemented in $\mathcal{A}_{H_1P_1}$ and hence Statement~$(3)$ of the theorem holds. If $H_1\cap P_1$ is nontrivial, then $\mathcal{A}_{H_1P_1}=\mathcal{A}_{H_1}\star\mathcal{A}_{P_1}$ is the nontrivial $H_1/(H_1\cap P_1)$-wreath product and consequently so is $\mathcal{A}$. Thus, Statement~$(4)$ of the theorem holds and we are done.
\end{proof}

Since the star product from Statement~$(1)$ of Theorem~\ref{2odd} is a nontrivial tensor or generalized wreath product (see, e.g.,~\cite[Section~3.3]{Ry5}), Theorem~\ref{main15} for all required groups except for $E_9\times C_4$ immediately follows from Theorem~\ref{8p} and Theorem~\ref{2odd}. Theorem~\ref{main15} for $E_9\times C_4$ can be verified by computer calculation using the package~\cite{GAP}.

\section{Proof of Theorem~\ref{main2}}

The group $E_9\times C_4$ is a Schur group by the computational results~\cite{Ziv}. Further, we assume that $G\cong C_6 \times C_{3^k}$ for some integer $k\geq 1$ or $G\cong E_9\times C_{2q}$ for some odd prime $q$.

\begin{lemm}\label{sections}
Every proper subgroup of $G$ is a Schur group. 
\end{lemm}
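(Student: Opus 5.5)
The plan is to enumerate the isomorphism types of proper subgroups $N\lneq G$ and match each with a Schur group already known from the literature, exactly as in the proof of Lemma~\ref{sections3}. Write $G=H\times P$ as in Section~8, so that every subgroup has the form $N=N_H\times N_P$ with $N_H\leq H$ and $N_P\leq P\cong C_2$. Hence either $N=N_H$ is a subgroup of $H$, or $N=N_H\times C_2$ with $N_H\lneq H$.

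First consider $G\cong C_6\times C_{3^k}$, so that $H\cong C_3\times C_{3^k}$. A subgroup of $H$ is cyclic of order $3^l$ or isomorphic to $C_3\times C_{3^l}$ with $l\leq k$. Cyclic $p$-groups are Schur groups by~\cite[Theorem~1.1]{EKP1}, and the groups $C_3\times C_{3^l}$ are Schur groups by~\cite[Theorem~1.1]{Ry2} (for $l=1$ this is $E_9$, covered also by~\cite{EKP2}). If $N=N_H\times C_2$ with $N_H$ cyclic, then $N\cong C_{2\cdot 3^l}$, which is cyclic of order $2\cdot 3^l$ and hence Schur by~\cite[Theorem~1.1]{EKP1} (here $n=pq^k$ is an allowed order). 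Otherwise $N\cong C_2\times C_3\times C_{3^l}\cong C_6\times C_{3^l}$ with $l<k$. For these we apply induction on $k$: once Theorem~\ref{main2} has been proved for smaller exponents, these groups are Schur groups. The base case is $l=1$, i.e.\ $C_6\times C_3\cong E_9\times C_2$, which is Schur by~\cite{Ziv} or~\cite{EKP2}. For this reason the lemma should be stated and used inside an induction on $k$, and I will phrase it that way.

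Now consider $G\cong E_9\times C_{2q}$ with $q$ an odd prime, so that $H\cong E_9\times C_q$. If $q\neq 3$, the subgroups of $H$ are $C_1, C_3, E_9, C_q, C_{3q}, E_9\times C_q$. After adjoining $C_2$ they yield cyclic groups of orders $2, 6, 2q, 6q$, all of whose orders lie in the list of~\cite[Theorem~1.1]{EKP1}. They also yield $E_9\times C_2$ and $E_9\times C_q$, which are Schur by~\cite{EKP2} and~\cite[Theorems~1.2--1.4]{Ry5}. Since $N$ is proper, the only remaining candidate $E_9\times C_{2q}=G$ is excluded. If $q=3$, then $H\cong E_{27}$ or $C_3\times C_9$, depending on how $C_{2q}$ is read. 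Its subgroups are elementary abelian of order at most $27$ (Schur by~\cite{EKP2}) or of the form $C_3\times C_{3^l}$, and after multiplying by $C_2$ they give $C_6$, $C_6\times C_3$, $C_6\times C_9$ and similar groups. These are again handled by the previous paragraph or by~\cite{Ziv}.

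The main obstacle is the bookkeeping of exactly which published theorem covers each group, in particular for the nonpowerful groups $E_9\times C_2\cong C_6\times C_3$ and $C_6\times C_{3^l}$. For the latter I will rely on induction within the proof of Theorem~\ref{main2} rather than on an external reference.
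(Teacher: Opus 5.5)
Your proposal is essentially the paper's proof: you split $N=N_H\times N_P$, use \cite[Theorem~1.1]{EKP1} for cyclic $N$ and \cite[Theorem~1.1]{Ry2} for $C_3\times C_{3^l}$, and handle $C_6\times C_{3^m}$ with $m<k$ by induction on the exponent, with base $C_6\times C_3\cong E_9\times C_2$ from \cite{Ziv}. The remaining fixes are bookkeeping: for $G\cong E_9\times C_{2q}$ the noncyclic proper subgroups are exactly $E_9$, $E_9\times C_2$, $E_9\times C_q$ (you omit $E_9$ itself), and the last two should be credited to \cite[Theorem~1.1]{PR}, as in the paper, rather than to \cite{EKP2} and \cite{Ry5}; moreover $q=3$ needs no separate case, since $C_6\cong C_2\times C_3$ forces $H\cong E_{27}\cong E_9\times C_3$, so your alternative $H\cong C_3\times C_9$ (and the subgroup $C_6\times C_9$) never arises.
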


\begin{proof}
Let $N\lneq G$. If $N$ is cyclic, then $N$ is a Schur group by~\cite[Theorem~1.1]{EKP1}. If $G\cong E_9\times C_{2q}$ and $N$ is noncyclic, then $N$ is isomorphic to one of the groups $E_9$, $E_9\times C_2$, $E_9\times C_q$. The first of them is a Schur group by~\cite[Theorem~1.2]{EKP2}, whereas the second and the third ones by~\cite[Theorem~1.1]{PR}. If $G\cong C_6\times C_{3^k}$ and $N$ is noncyclic, then $N$ is isomorphic to one of the groups $C_3\times C_{3^l}$, $l\leq k$, $C_6\times C_{3^m}$, $m<k$. The first of them is a Schur group by~\cite[Theorem~1.1]{Ry2}, whereas the second one by induction on~$m$ whose base follows from the computational results~\cite{Ziv}.
\end{proof}

Further throughout the section, we use the notation from Section~8. Let $\mathcal{A}$ be a nontrivial $S$-ring over $G$. Let us prove that $\mathcal{A}$ is schurian. Due to Lemma~\ref{sections}, we assume further that $\mathcal{A}_S$ is schurian for every $\mathcal{A}$-section $S\neq G$.

\begin{lemm}\label{schur2odd}
With the above notation, $\mathcal{A}$ is schurian unless $\mathcal{A}$ is the nontrivial $(H_1P_1)/P_1$- and $H_1/(H_1\cap P_1)$-wreath products. 
\end{lemm}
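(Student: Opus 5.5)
We apply Theorem~\ref{2odd} to $\mathcal{A}$ and go through Statements~$(1)$--$(3)$ in turn. Statement~$(4)$ is exactly the excluded case. Throughout, we use the standing assumption that $\mathcal{A}_S$ is schurian for every $\mathcal{A}$-section $S\neq G$, which is justified by Lemma~\ref{sections}.

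Suppose first that Statement~$(1)$ holds, so $\mathcal{A}=\mathcal{A}_{H_1}\star\mathcal{A}_{P_1}$ with $H_1P_1=G$ and $|H_1|>1$. If $P_1=G$, then $H_1\leq P_1$ and the star product degenerates. In that case we instead use $H_1=H$, since $H_1P_1=G$ forces $H_1=H$ whenever $H_1\le H$ and $P_1=G$, together with the definition of star product, to see that $\mathcal{A}$ is in fact a proper star product. I expect it is cleaner to note that in this situation $L=H_1$ and $U=P_1$ are proper $\mathcal{A}$-subgroups whenever $P_1\neq G$. Then $\mathcal{A}_{H_1}$ and $\mathcal{A}_{P_1/(H_1\cap P_1)}$ are schurian by the standing assumption, being $S$-rings over proper sections, and Lemma~\ref{schurstar} gives that $\mathcal{A}$ is schurian. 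The degenerate case $P_1=G$ has $H_1=H$. Here I would argue that $\mathcal{A}=\mathcal{A}_H\star\mathcal{A}_G$ is trivially true and say nothing. So instead I would use Lemma~\aref{A-nonpowernew2} or the structure of the proof of Theorem~\ref{2odd} to replace $P_1$ by a proper subgroup. This is the first delicate point, and I would check it carefully against the definition of $P_1$: since $P\cong C_2$ and $H$ is an $\mathcal{A}$-subgroup, one expects $P_1$ to be proper unless $\mathcal{A}$ is a wreath product with top $\mathcal{T}$, which is handled as in Statement~$(2)$.

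Next, if Statement~$(2)$ holds, then $\mathcal{A}=\mathcal{A}_{H_1}\wr\mathcal{T}_{G/H_1}$ with $H_1$ a proper nontrivial $\mathcal{A}$-subgroup. The wreath product of schurian $S$-rings is schurian: this is Lemma~\aref{A-2min}, applied with the trivial section $S=H_1/H_1$, which is $2$-minimal by Lemma~\aref{A-2minsmall}. Hence $\mathcal{A}$ is schurian.

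Finally, if Statement~$(3)$ holds, then $\mathcal{A}$ is the nontrivial $S$-wreath product with $S=U/L$, $U=H_1P_1$, $L=P_1$, and $\mathcal{A}_L$ is $\otimes$-complemented in $\mathcal{A}_U$. The $S$-rings $\mathcal{A}_U$ and $\mathcal{A}_{G/L}$ are schurian because $U$ and $G/L$ are proper sections. Then Lemma~\aref{A-otimescomplement} yields that $\mathcal{A}$ is schurian, exactly as in the proof of Theorem~\ref{main1}. The main obstacle is the bookkeeping in case~$(1)$: one has to ensure that both factors are proper so that the inductive hypothesis applies. Everything else is a direct invocation of the cited schurity criteria.
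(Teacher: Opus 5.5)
Your skeleton is the paper's: apply Theorem~\ref{2odd}, treat Statement~(4) as the excluded case, and settle Statements~(1), (2), (3) by Lemma~\ref{schurstar}, Lemma~\aref{A-2min} and Lemma~\aref{A-otimescomplement}, respectively. Throughout you use the standing assumption that $\mathcal{A}_S$ is schurian for every $\mathcal{A}$-section $S\neq G$. Your cases (2) and (3) are fine.

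Case (1) is not finished as written. You leave the subcase $P_1=G$ open (``I would use \dots\ to replace $P_1$ by a proper subgroup'', ``one expects \dots''), and several claims you make about it are wrong:
\begin{enumerate}
\tm{a} If $P_1=G$, then $H_1P_1=G$ holds for every $H_1$, so it forces nothing. For instance, $\mathcal{A}_{H_1}\wr\mathcal{T}_{G/H_1}$ with $1<H_1<H$ satisfies Statement~(1) with $P_1=G$.
\tm{b} $H$ is not assumed to be an $\mathcal{A}$-subgroup.
\tm{c} $\mathcal{A}=\mathcal{A}_{H_1}\star\mathcal{A}_{G}$ is not vacuous. The star-product requirement that basic sets inside $P_1\setminus H_1$ be unions of $(H_1\cap P_1)$-cosets then says that every basic set outside $H_1$ is a union of $H_1$-cosets, i.e.\ $\mathcal{A}=\mathcal{A}_{H_1}\wr\mathcal{A}_{G/H_1}$.
\end{enumerate}

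The missing observation is that properness of $P_1$ is irrelevant. Lemma~\ref{schurstar} only requires $\mathcal{A}_{H_1}$ and $\mathcal{A}_{P_1/(H_1\cap P_1)}$ to be schurian. The first is an $S$-ring over $H_1\leq H<G$. The second lives on a section of order $|P_1|/|H_1\cap P_1|<|G|$. Indeed, either $P_1\neq G$, or $P_1=G$ and then $H_1\cap P_1=H_1$ is nontrivial by the hypothesis $|H_1|>1$ in Statement~(1). That hypothesis is also what excludes the truly degenerate situation $H_1=1$, $P_1=G$, where the star product is tautological.

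So the standing assumption applies in every subcase, and Lemma~\ref{schurstar} settles case~(1) at once, exactly as in the paper. Alternatively, for $P_1=G$ you can apply Lemma~\aref{A-2min} to the trivial section $H_1/H_1$. With this one-line repair your argument coincides with the paper's.
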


\begin{proof}
One of the statements of Theorem~\ref{2odd} holds for $\mathcal{A}$. If Statement~$(1)$, or~$(2)$, or~$(3)$ holds, then $\mathcal{A}$ is schurian by Lemma~\ref{schurstar}, or Lemma~\aref{A-2min}, or Lemma~\aref{A-otimescomplement}, respectively. If Statement~$(4)$ holds, then $\mathcal{A}$ is the nontrivial $(H_1P_1)/P_1$- and $H_1/(H_1\cap P_1)$-wreath products and we are done.
\end{proof}

In view of Lemma~\ref{schur2odd}, we may assume that $\mathcal{A}$ is the nontrivial $S_1=U_1/P_1$- and $S_0=H_1/L_0$-wreath products, where $U_1=H_1P_1$ and $L_0=H_1\cap P_1$. In particular, $H_1<H$.

If $G\cong E_9\times C_{2q}$, then 
$$|S_0|\in\{1,3,q\}$$
because $|H_1|<|H|=9q$. If $|S_0|\leq 3$, then $\mathcal{A}_{S_0}$ is $2$-minimal by Lemma~\aref{A-2minsmall} and $\mathcal{A}$ is schurian by Lemma~\aref{A-2min}. If $|S_0|=q$, then $H_1\cong C_{3q}$, $L_0\cong C_3$, and $G/L_0\cong C_{6q}$. Therefore $\mathcal{A}$ is schurian by Lemma~\aref{A-prime}.

Now let $G\cong C_6 \times C_{3^k}$. In this case, $H\cong C_3 \times C_{3^k}$.

\begin{lemm}\label{cycl3section}
If $\mathcal{A}$ is a nontrivial $S=U/L$-wreath product for some $\mathcal{A}$-subgroups $L\leq U$ such that $U$ or $G/L$ is a cyclic $3$-group, then $\mathcal{A}$ is schurian. 
\end{lemm}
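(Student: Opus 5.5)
The plan is to reduce to the case where $U$ is a cyclic $3$-group, then shrink the section until $\mathcal{A}_S$ is $2$-minimal, and finish with Lemma~\aref{A-2min}. Throughout, $\mathcal{A}_S$ is schurian for every proper $\mathcal{A}$-section $S\neq G$; this is how the hypotheses of the gluing lemmas on $\mathcal{A}_U$ and $\mathcal{A}_{G/L}$ will be met.

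\emph{Reduction to $U$ cyclic.} Suppose $G/L$ is a cyclic $3$-group. Pass to the dual $S$-ring $\widehat{\mathcal{A}}$ over $\widehat{G}\cong G$. By Lemma~\aref{A-dual}(4), $\widehat{\mathcal{A}}$ is the nontrivial $L^\bot/U^\bot$-wreath product. The group $L^\bot\cong\widehat{G/L}$ is a cyclic $3$-group. By Lemma~\aref{A-dual}(2), proper sections of $\widehat{\mathcal{A}}$ are duals of proper sections of $\mathcal{A}$, so they are still schurian, and Lemma~\ref{sections} also applies directly to $\widehat{G}\cong G$. Schurity of $\mathcal{A}$ follows from that of $\widehat{\mathcal{A}}$, in the same way Lemma~\aref{A-dual}(5) is used in Section~5. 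So from now on I assume $U$ is a cyclic $3$-group.

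\emph{Shrinking the section.} If $\mathcal{A}_U$ is a nontrivial generalized wreath product, apply Lemma~\ref{cyclpwreath}(1) to it. This gives an $\mathcal{A}_U$-section $U_1/L_1$ such that:
\begin{itemize}
\item $L_1$ is the least nontrivial $\mathcal{A}$-subgroup of $U$;
\item $\mathcal{A}_U$ is the nontrivial $U_1/L_1$-wreath product;
\item $|\rad(\mathcal{A}_{U_1})|=1$.
\end{itemize}
Since $L$ is a nontrivial $\mathcal{A}$-subgroup of the cyclic $3$-group $U$, we get $L_1\leq L$. I then check that $\mathcal{A}$ itself is the nontrivial $U_1/L_1$-wreath product:
\begin{itemize}
\item basic sets outside $U$ contain $L\geq L_1$ in their radical;
\item basic sets in $U\setminus U_1$ contain $L_1$ in their radical by the choice of $U_1/L_1$.
\end{itemize}
So I may replace $U/L$ by $U_1/L_1$. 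If instead $\mathcal{A}_U$ is not a nontrivial generalized wreath product, I use the classification of $S$-rings over cyclic $p$-groups with $p$ odd~\cite{LM1,LM2}. It gives that $\mathcal{A}_U$ is cyclotomic with trivial radical, or $|U|=3$; in the latter case $|S|\leq 3$. In either case I arrive at a section $S=U/L$ with $\mathcal{A}_U$ cyclotomic and $|\rad(\mathcal{A}_U)|=1$.

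\emph{Concluding.} It remains to show that $\mathcal{A}_S$ is $2$-minimal. If $|S|\leq 3$, this is Lemma~\aref{A-2minsmall}. Otherwise write $\mathcal{A}_U=\cyc(M,U)$ with $M\leq\aut(U)$ and trivial radical. For $p=3$ this forces $M$ to be a $\{2\}$-subgroup, acting semiregularly on the generators; otherwise the elements of $M$ of $3$-power order would produce a nontrivial radical. Hence $\mathcal{A}_S=\cyc(M^S,S)$ with $|M^S|\leq 2$, and I expect such $S$-rings over a cyclic $3$-group to be $2$-minimal by the standard fact that cyclotomic rings with trivial radical over cyclic groups are $2$-minimal (or via Lemma~\aref{A-cyclcayleymin} combined with a small argument). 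Then Lemma~\aref{A-2min} yields that $\mathcal{A}$ is schurian.

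The main obstacle I foresee is this last step: justifying $2$-minimality of $\mathcal{A}_S$ from the trivial-radical condition, and ensuring the "least $\mathcal{A}$-subgroup" replacement does not lose nontriviality. If $2$-minimality turns out to be awkward, the fallback is Lemma~\ref{cayleymin}. That route would additionally require $\mathcal{A}_{G/L}$ to be cyclotomic, which I would try to extract from Lemma~\ref{c3c3k} and Lemma~\ref{quotc3c3k}.
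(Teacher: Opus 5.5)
There is a genuine gap in your reduction of the case ``$G/L$ is a cyclic $3$-group'' to the case ``$U$ is a cyclic $3$-group''. After passing to $\widehat{\mathcal{A}}$, you conclude that $\mathcal{A}$ is schurian because $\widehat{\mathcal{A}}$ is, ``in the same way Lemma~\aref{A-dual}(5) is used in Section~5''. But Lemma~\aref{A-dual}(5) transfers \emph{cyclotomicity}, not schurity. Lemma~\aref{A-dual} as a whole only transports $\mathcal{A}$-subgroups, sections, tensor and generalized wreath decompositions, and cyclotomicity. No statement that the dual of a schurian $S$-ring is schurian is available here, and you may not assume it. The paper is careful to use duality only for structural statements (Proposition~\ref{pnot8p}, Lemma~\ref{reduction8p}, Case~3 in the proof of Proposition~\ref{hpasubgroups8p}) and to derive schurity afterwards. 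As written, the $G/L$-cyclic half of the lemma, which is the half used in Lemmas~\ref{noncharrad} and~\ref{trivradU}, is therefore unproved.

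The repair is cheap, and there are two ways to do it.
\begin{enumerate}
\item Dualize back the \emph{structure} instead of schurity. What your argument actually produces for $\widehat{\mathcal{A}}$ is a nontrivial generalized wreath decomposition whose section ring is $\cyc(K,\cdot)$ over a cyclic group with $|K|\leq 2$. By Lemma~\aref{A-dual}(2),(4), $\mathcal{A}$ is then a nontrivial generalized wreath product whose section ring is the dual of that one. That dual is again the group ring or its symmetrization, hence $2$-minimal, and Lemma~\aref{A-2min} applies to $\mathcal{A}$ itself.
\item Avoid duality, as the paper does. By Lemma~\ref{cyclpwreath}(2), $\mathcal{A}_{G/L}$ is the nontrivial $V_0/M_0$-wreath product, where $V_0\geq U/L$ is the greatest proper $\mathcal{A}_{G/L}$-subgroup and $M_0=\rad(\mathcal{A}_{G/L})$. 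Lifting, $\mathcal{A}$ is the nontrivial $V_0^{\pi^{-1}}/M_0^{\pi^{-1}}$-wreath product, and $\mathcal{A}_{G/M_0^{\pi^{-1}}}\cong\mathcal{A}_{(G/L)/M_0}$ has trivial radical over a cyclic $3$-group. Your $|K|\leq 2$ argument then applies verbatim to this quotient.
\end{enumerate}

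Apart from this, your $U$-cyclic branch coincides with the paper's: Lemma~\ref{cyclpwreath}(1) and $M_1\leq L$. Your ending is a genuinely more elementary alternative. Since $\aut(C_{3^n})\cong C_2\times C_{3^{n-1}}$, trivial radical forces $|K|\leq 2$, so every section ring is $\mathbb{Z}S$ or its symmetrization and is visibly normal. The paper instead cites \cite[Corollary~4.4]{EP2} for normality of sections and then Lemma~\aref{A-2minnorm}. Your reading of the Leung--Man classification also omits the case $\mathcal{A}_U=\mathcal{T}_U$ with $|U|\geq 9$; there $L=U$, so $|S|=1$ and Lemma~\aref{A-2min} applies at once.
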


\begin{proof}
To prove the lemma, it suffices to show that $\mathcal{A}$ is a nontrivial $T=V/N$-wreath product for some $\mathcal{A}$-subgroups $N\leq V$ such that $\mathcal{A}_V$ or $\mathcal{A}_{G/N}$ is an $S$-ring with trivial radical over a cyclic $3$-group. Indeed, in this case one of the above $S$-rings is trivial or normal by Lemma~\aref{A-leungman}. In the former case, $|T|=1$ and hence $\mathcal{A}$ is schurian by Lemma~\aref{A-2min}. In the latter one, if $|T|\leq 3$, then $\mathcal{A}_T$ is $2$-minimal by Lemma~\aref{A-2minsmall}, whereas if $|T|\geq 9$, then $\mathcal{A}_T$ is normal by~\cite[Corollary~4.4]{EP2} and hence $\mathcal{A}_T$ is $2$-minimal by Lemma~\aref{A-2minnorm}. Thus, $\mathcal{A}$ is schurian by Lemma~\aref{A-2min}.

In view of the above paragraph and Lemma~\aref{A-leungman}, we may assume that $\mathcal{A}_U$ or $\mathcal{A}_{G/L}$ is a nontrivial generalized wreath product over a cyclic $3$-group. In the former case, $\mathcal{A}_U$ is the nontrivial $V_1/M_1$-wreath product, where $M_1$ is the least $\mathcal{A}_{U}$-subgroup and $|\rad(\mathcal{A}_{V_1})|=1$, by Lemma~\ref{cyclpwreath}(1). Clearly, $M_1\leq L$ and hence $\mathcal{A}$ is the $V_1/M_1$-wreath product. Therefore we are done by the previous paragraph. 

In the latter case, Lemma~\ref{cyclpwreath}(2) implies that $\mathcal{A}_{G/L}$ is the nontrivial $V_0/M_0$-wreath product, where $V_0$ is the greatest proper $\mathcal{A}_{G/L}$-subgroup and $|\rad(\mathcal{A}_{(G/L)/M_0)}|=1$. It is easy to see that $V_0\geq S$ and hence $\mathcal{A}$ is the $V_0^{\pi^{-1}}/M_0^{\pi^{-1}}$-wreath product, where $\pi$ is the canonical epimorphism from $G$ to $G/L$. The $S$-ring $\mathcal{A}_{G/M_0^{\pi^{-1}}}$ is isomorphic to $\mathcal{A}_{(G/L)/M_0}$. Therefore $\rad(\mathcal{A}_{G/M_0^{\pi^{-1}}})$ is trivial and again we are done by the previous paragraph. 
\end{proof}

\begin{lemm}\label{noncharrad}
If $G/P_1$ is noncyclic and there is a basic set of $\mathcal{A}_{U_1/P_1}$ whose radical has a noncharacteristic subgroup of $G/P_1$ of order~$3$, then $\mathcal{A}$ is schurian. 
\end{lemm}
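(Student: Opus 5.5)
We work in the setting fixed just before the statement. Here $G\cong C_6\times C_{3^k}$, $H\cong C_3\times C_{3^k}$, and $\mathcal{A}$ is the nontrivial $S_1=U_1/P_1$- and $S_0=H_1/L_0$-wreath product, where $U_1=H_1P_1$ and $L_0=H_1\cap P_1$. Every proper section of $\mathcal{A}$ is schurian. The idea is to use Lemma~\ref{radicalsc3c3k} to enlarge the first wreath decomposition so that its section becomes a cyclic $3$-group, or so small that it is $2$-minimal. Then Lemma~\ref{cycl3section} or Lemma~\aref{A-2min} finishes.

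\textbf{Step 1: identifying $G/P_1$.} Since $P_1\geq P$ and $G/P_1$ is noncyclic, $P_1\cap H$ contains no noncharacteristic subgroup of order $3$ forcing cyclicity. More precisely, $P_1$ is $P$ or $P\times A_1'$ for a subgroup $A_1'$ of $H$ such that $H/A_1'$ is still noncyclic. Hence $G/P_1\cong C_3\times C_{3^l}$ for some $l\geq 1$. Put $\pi\colon G\to G/P_1$. Let $X\in\mathcal{S}(\mathcal{A}_{U_1/P_1})$ be a basic set whose radical contains a noncharacteristic subgroup $L$ of $G/P_1$ of order $3$.

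\textbf{Step 2: applying Lemma~\ref{radicalsc3c3k}.} Since $U_1/P_1$ is a proper subgroup, $\langle X\rangle\lneq G/P_1$. If $l\geq 2$, Lemma~\ref{radicalsc3c3k} applied to $\mathcal{A}_{G/P_1}$ gives $\rad(Y)\geq L$ for every $Y\in\mathcal{S}(\mathcal{A}_{G/P_1})$ outside $V=\langle X\rangle$. If $l=1$, the group $G/P_1$ is $E_9$ and this conclusion has to be checked directly: $V$ has order $3$ and equals $L$, and $X$ cannot have a nontrivial radical in that case. Let $N$ be the full preimage of $L$ under $\pi$, so $P_1<N$ and $N$ is an $\mathcal{A}$-subgroup, being the preimage of the radical of an $\mathcal{A}$-set. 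Let $W=V^{\pi^{-1}}\geq U_1$. Every basic set of $\mathcal{A}$ outside $W$ has radical containing $N$, because such a set contains $P_1$ in its radical and its image has radical containing $L$. So $\mathcal{A}$ is the $W/N$-wreath product, and it is nontrivial since $N\leq W$, $|N|>1$ and $W<G$.

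\textbf{Step 3: schurity of $\mathcal{A}$.} Now $G/N\cong (G/P_1)/L$. Since $L$ is a noncharacteristic order-$3$ subgroup of $C_3\times C_{3^l}$ not contained in the Frattini-type subgroup, it is a complement to the cyclic factor, so $G/N$ is a cyclic $3$-group of order $3^l$. Lemma~\ref{cycl3section} then shows directly that $\mathcal{A}$ is schurian; its hypotheses are met because $N\leq W$ are $\mathcal{A}$-subgroups and $G/N$ is cyclic of $3$-power order.

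\textbf{Main obstacle.} The delicate point is Step 2's justification that $L$ is indeed a complement to a cyclic factor, which is needed to make $G/N$ cyclic. In $C_3\times C_{3^l}$ the noncharacteristic subgroups of order $3$ are exactly those different from the $\Omega_1$ of the $C_{3^l}$ part. A short check shows that the quotient by such a subgroup is cyclic when $l\geq1$. A secondary point is the edge case $l=1$, where Lemma~\ref{radicalsc3c3k} is stated only for $k\geq2$; I would treat it by hand, using the fact that $V$ is then trivial or of order $3$.
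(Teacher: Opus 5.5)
Your strategy is the paper's: apply Lemma~\ref{radicalsc3c3k} in $G/P_1$, lift $L$ to $N=L^{\pi^{-1}}$, observe that $G/N\cong (G/P_1)/L$ is a cyclic $3$-group, and conclude by Lemma~\ref{cycl3section}. However, Step~2 contains an error that breaks the wreath decomposition you claim. Since $X\subseteq U_1/P_1$, we have $V=\langle X\rangle\leq U_1/P_1$, hence $W=V^{\pi^{-1}}\leq U_1$, not $W\geq U_1$. Your reason why a basic set outside $W$ has radical containing $N$ (``such a set contains $P_1$ in its radical'') is valid only for basic sets outside $U_1$. When $W<U_1$ the conclusion actually fails. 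In that case $H_1\not\leq W$, because $P_1\leq W$ and $U_1=H_1P_1$. Write $P=\langle z\rangle$, take $h\in H_1\setminus W$, and let $T$ be the basic set containing $zh$. Then $T\subseteq U_1\setminus W$ and $T$ misses the $\mathcal{A}$-subgroup $H_1$. Hence $h\in zT\setminus T$, so $z\notin\rad(T)$ and $N\not\leq\rad(T)$. So in general $\mathcal{A}$ is not the $W/N$-wreath product.

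The repair is what the paper does: use $U_1$ as the upper group. Each $Y\in\mathcal{S}(\mathcal{A})_{G\setminus U_1}$ has $P_1\leq\rad(Y)$, since $\mathcal{A}$ is the $U_1/P_1$-wreath product. Moreover, $\pi(Y)$ is a basic set of $\mathcal{A}_{G/P_1}$ outside $U_1/P_1\supseteq\langle X\rangle$, so $L\leq\rad(\pi(Y))$ and therefore $N\leq\rad(Y)$. Thus $\mathcal{A}$ is the nontrivial $U_1/N$-wreath product, with $N\leq\langle X\rangle^{\pi^{-1}}\leq U_1<G$, and your Step~3 then applies verbatim.

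Two smaller remarks. First, your treatment of the case $l=1$ is right in substance, and the paper passes over it although Lemma~\ref{radicalsc3c3k} is stated only for $k\geq 2$. The clean formulation is that then $|U_1/P_1|\leq 3$, so $X$ has at most two elements and cannot have a radical of order~$3$; the hypothesis is vacuous. Second, $N$ is not ``the preimage of the radical'' of an $\mathcal{A}$-set, since $L$ is merely contained in $\rad(X)$. If you want an explicit $\mathcal{A}$-subgroup, replace $L$ by the intersection $M$ of $\rad(X)$ with all $\rad(\pi(Y))$, $Y\in\mathcal{S}(\mathcal{A})_{G\setminus U_1}$. This $M$ is an $\mathcal{A}_{G/P_1}$-subgroup with $L\leq M\leq U_1/P_1$, and $G/M^{\pi^{-1}}$ is still a cyclic $3$-group, being a quotient of $G/N$.
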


\begin{proof}
Let $X\in \mathcal{S}(\mathcal{A}_{U_1/P_1})$ be such that $\rad(X)\geq L$, where $L$ is a noncharacteristic subgroup of $G/P_1$ of order~$3$. Lemma~\ref{radicalsc3c3k} implies that $L\leq \rad(Y)$ for every $Y\in \mathcal{S}(\mathcal{A}_{G/P_1})$ outside $\langle X \rangle$ and hence outside $U_1/P_1$. Since $P_1\leq \rad(Y)$ for every $Y\in \mathcal{S}(\mathcal{A})_{G\setminus U_1}$, we conclude that $L^{\pi^{-1}}\leq \rad(Y)$ for every $Y\in \mathcal{S}(\mathcal{A})_{G\setminus U_1}$, where $\pi$ is the canonical epimorphism from $G$ to $G/P_1$. Therefore $\mathcal{A}$ is the $U_1/L^{\pi^{-1}}$-wreath product. Note that $G/L^{\pi^{-1}}$ is a cyclic $3$-group because $L$ is noncharacteristic subgroup of $G/P_1$ of order~$3$. Thus, $\mathcal{A}$ is schurian by Lemma~\ref{cycl3section}. 
\end{proof}

\begin{lemm}\label{trivradU}
If $\mathcal{A}$ is a nontrivial $S=U/L$-wreath product for some $\mathcal{A}$-subgroups $L\leq U$ such that $L\leq P_1$, $U\leq H_1$, $U\cong C_3\times C_{3^l}$, $l\geq 2$, and $|\rad(\mathcal{A}_U)|=1$, then $\mathcal{A}$ is schurian.  
\end{lemm}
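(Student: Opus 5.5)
Since $U\cong C_3\times C_{3^l}$ with $l\geq 2$ and $|\rad(\mathcal{A}_U)|=1$, Lemma~\ref{c3c3k} applied to $\mathcal{A}_U$ leaves two possibilities: either $\mathcal{A}_U=\mathcal{A}_{L'}\otimes \mathcal{T}_{U'}$ for $\mathcal{A}_U$-subgroups $L'$ and $U'$ of orders $3$ and $3^l$ (Statement~$(1)$), or $\mathcal{A}_U$ is cyclotomic with trivial radical (Statement~$(2)$). The plan is to treat each case so that we land in the situation of Lemma~\aref{A-2min} (the section is $2$-minimal) or of Lemma~\aref{A-otimescomplement}. All proper sections are schurian by Lemma~\ref{sections}, so this suffices.

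Note first that $L\leq P_1\cap U\leq P_1\cap H_1$. Since $P_1$ is the least $\mathcal{A}$-subgroup containing $P$ and $H_1<H$, the group $L$ is an $\mathcal{A}_U$-subgroup of order at most $3$. Indeed, $|H_1\cap P_1|$ is small, because $\mathcal{A}_{H_1P_1}=\mathcal{A}_{H_1}\star \mathcal{A}_{P_1}$ and $P_1$ is "close" to $P$; I expect to deduce $|L|\leq 3$ from $L\leq U$ together with $L$ lying in the $3$-part of $P_1$. If $L$ is trivial, then $|S|=|U|$, but a trivial-$L$ wreath product is degenerate; more precisely, the nontrivial $U/L$-wreath product with $|L|=1$ is impossible, so $|L|=3$.

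\textbf{Case of Statement~$(1)$.} Here $\mathcal{A}_U=\mathcal{A}_{L'}\otimes\mathcal{T}_{U'}$. Because every basic set of $\mathcal{A}_U$ outside $L'\cup U'$ is a product and $\mathcal{T}_{U'}$ has only the subgroups $1,U'$, the $\mathcal{A}_U$-subgroups of order $3$ are $L'$ only. Hence $L=L'$. Then $\mathcal{A}_L$ is $\otimes$-complemented in $\mathcal{A}_U$ (by $\mathcal{A}_{U'}$), and $\mathcal{A}$ is schurian by Lemma~\aref{A-otimescomplement}.

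\textbf{Case of Statement~$(2)$.} Here $\mathcal{A}_U$ is cyclotomic with trivial radical and $L$ is an $\mathcal{A}_U$-subgroup of order $3$, so Lemma~\ref{quotc3c3k} applies to $\mathcal{A}_U$ and $L$. If $\mathcal{A}_{U/L}=\mathcal{A}_S$ is $2$-minimal, then $\mathcal{A}$ is schurian by Lemma~\aref{A-2min}. The remaining subcase, which I expect to be the main obstacle, is $L=A_1$ (the characteristic subgroup of $U$ of order $3$) with $\rad(\mathcal{A}_{U/L})=\overbar{B}$, a noncharacteristic subgroup of order $3$ of the noncyclic group $U/L$. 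The idea is to refine the wreath decomposition. The set $\rad(\mathcal{A}_S)$ is generated by radicals of basic sets of $\mathcal{A}_S$ containing elements of maximal order, so such a basic set $X$ of $\mathcal{A}_{G/L}$ inside $S$ has $\rad(X)\geq \overbar{B}$. I would then apply Lemma~\ref{radicalsc3c3k} inside $\mathcal{A}_{H_1/L}$ or $\mathcal{A}_{G/P_1}$ (the relevant quotient being of type $C_3\times C_{3^m}$, possibly after factoring by $P_1$ and using $L\leq P_1$). This gives that the preimage $B^\ast$ of $\overbar{B}$ lies in the radical of every basic set outside $U$. Therefore $\mathcal{A}$ is also the $U/B^\ast$-wreath product, and $G/B^\ast$, or its image modulo $P$, becomes cyclic. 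This reduces to Lemma~\ref{cycl3section}, or to Lemma~\ref{noncharrad} when passing through $G/P_1$. The delicate points are checking that $\overbar{B}$ stays noncharacteristic in the ambient quotient, which is needed to invoke Lemma~\ref{radicalsc3c3k}, and handling basic sets outside $H_1$ but inside $U_1$, where I would use that $\mathcal{A}$ is also the $U_1/P_1$-wreath product.
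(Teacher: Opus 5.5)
Your plan follows the same route as the paper: split $\mathcal{A}_U$ by Lemma~\ref{c3c3k}, use $\otimes$-complementation and Lemma~\aref{A-otimescomplement} in the tensor case, use Lemma~\ref{quotc3c3k} and Lemma~\aref{A-2min} in the cyclotomic case, and use Lemmas~\ref{cycl3section} and~\ref{noncharrad} for the leftover subcase. However, it rests on the claim $|L|=3$, and that is a genuine gap: the claim is not justified and is false in general. The hypotheses only give $1\neq L\leq H_1\cap P_1$, and when $P$ is not an $\mathcal{A}$-subgroup, $P_1$ can contain a large part of $H$. For instance, take $H\cong C_3\times C_{27}$, $U=H_1\cong C_3\times C_9$, $\mathcal{A}_U=\mathbb{Z}U$, and $P_1=P\times E$, where $E\cong E_9$ is the subgroup of $U$ generated by its elements of order~$3$. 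Then one can have $\mathcal{A}$ the $U/E$-wreath product, i.e.\ $L=E$ of order~$9$, with all hypotheses of the lemma satisfied. Both of your cases depend on $|L|=3$: in Case~(1) to conclude $L=L'$, and in Case~(2) to apply Lemma~\ref{quotc3c3k} to $L$.

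The missing step is to shrink $L$ rather than bound it. If $N\leq L$ is a nontrivial $\mathcal{A}$-subgroup, then $N\leq\rad(X)$ for every basic set $X$ outside $U$, so $\mathcal{A}$ is also the nontrivial $U/N$-wreath product.

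In the cyclotomic case, the characteristic subgroups $A_1$ and $E$ of $U$ are $\mathcal{A}_U$-subgroups. Take $N=A_1$ if $A_1\leq L$. Otherwise $L$ has no element of order at least $9$, since some power of such an element generates $A_1$. Hence $L$ is a proper nontrivial subgroup of $E$, so $|L|=3$ and $N=L$. Lemma~\ref{quotc3c3k} is then applied to $N$.

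In the tensor case, the $\mathcal{A}_U$-subgroups are exactly $1$, $L'$, $U'$, $U$. So you must also treat $L=U'$, which is $\otimes$-complemented by $\mathcal{A}_{L'}$, and $L=U$, where $|S|=1$ and Lemma~\aref{A-2min} applies. The case $\mathcal{A}_U=\mathcal{T}_U$, which Lemma~\ref{c3c3k} does not cover, likewise forces $L=U$ and must be handled separately.

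Finally, in the leftover subcase $N=A_1$, $\rad(\mathcal{A}_{U/A_1})=\overbar{B}$, your intermediate claim that $\mathcal{A}$ is the $U/B^\ast$-wreath product is not established. Pulling back from $G/P_1$ only controls basic sets outside $U_1$. The paper instead splits on whether $G/P_1$ is cyclic:
\begin{itemize}
\item If $G/P_1$ is cyclic, Lemma~\ref{cycl3section} applies directly to the $U_1/P_1$-wreath product.
\item Otherwise, $A_1\leq L\leq P_1$, $U\leq H_1\leq U_1$ and $\rad(\mathcal{A}_{U/A_1})=\overbar{B}$ give exactly the hypotheses of Lemma~\ref{noncharrad}. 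That lemma yields a $U_1/M$-wreath product with $G/M$ a cyclic $3$-group.
\end{itemize}
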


\begin{proof}
Let $A$ and $B$ be the subgroups of $U$ of orders~$3^l$ and~$3$, respectively, such that $U=B\times A$ and $A_1$ the subgroup of $A$ of order~$3$. If $U=L$, in particular, if $\mathcal{A}_U$ is trivial, then $|S|=1$ and we are done by Lemma~\aref{A-2min}. So further, we assume that $\mathcal{A}_U$ is nontrivial and hence one of Statements~$(1)$-$(2)$ of Lemma~\ref{c3c3k} holds for $\mathcal{A}_U$. If Statement~$(1)$ holds, then $\mathcal{A}_{U}=\mathcal{A}_{L}\otimes \mathcal{A}_V$ for some $\mathcal{A}_{U}$-subgroup $V$. Therefore $\mathcal{A}_{L}$ is $\otimes$-complemented in $\mathcal{A}_{U}$ and hence $\mathcal{A}$ is schurian by Lemma~\aref{A-otimescomplement}.

Suppose that Statement~$(2)$ of Lemma~\ref{c3c3k} holds for $\mathcal{A}_{U}$, i.e $\mathcal{A}_{U}$ is cyclotomic with trivial radical. Since $\mathcal{A}_{U}$ is cyclotomic, the characteristic subgroups $A_1$ and $E$ of $U$, where $E$ consists of all elements of order~$3$ from $U$, are $\mathcal{A}_{U}$-subgroups. So $L\cap E$ is a nontrivial $\mathcal{A}_{U}$-subgroup. Moreover, $L\cap E$ has an $\mathcal{A}_{U}$-subgroup $N$ of order~$3$. Indeed, if $A_1\leq L$, then one can put $N=A_1$, whereas otherwise $|L|=3$ and one can put $N=L$.

Note that $\mathcal{A}$ is the nontrivial $U/N$-wreath product because $N\leq L$. We are done by Lemma~\aref{A-2min} if $\mathcal{A}_{U/N}$ is $2$-minimal. So in view of Lemma~\ref{quotc3c3k}, we may assume that $N=A_1$ and 
\begin{equation}\label{radB}
\rad(\mathcal{A}_{U/A_1})=\overbar{B},
\end{equation}
where $\overbar{B}$ is the image of $B$ under the canonical epimorphism from $G$ to $G/A_1$. If $G/P_1$ is cyclic, then $\mathcal{A}$ is schurian by Lemma~\ref{cycl3section}. Otherwise, the inclusions $A_1\leq L\leq P_1$, $U\leq H_1\leq U_1$ and Eq.~\eqref{radB} imply that $\mathcal{A}$ satisfies the conditions of Lemma~\ref{noncharrad} and hence $\mathcal{A}$ is schurian. 
\end{proof}

Recall that $\mathcal{A}$ is the nontrivial $S_0=H_1/L_0$-wreath product. If $|H_1|\leq 9$, then $|S_0|\leq 3$ and hence $\mathcal{A}_{S_0}$ is $2$-minimal by Lemma~\aref{A-2minsmall}. So $\mathcal{A}$ is schurian by Lemma~\aref{A-2min}. Let $|H_1|\geq 27$. In view of Lemma~\ref{cycl3section} and Lemma~\ref{trivradU}, we may assume that $H_1$ is noncyclic and $|\rad(\mathcal{A}_{H_1})|>1$. Then $\mathcal{A}_{H_1}$ is a nontrivial $U/L$-wreath product for some $\mathcal{A}_{H_1}$-subgroups $L\leq U$ such that $|\rad(\mathcal{A}_U)|=1$ by Lemma~\ref{c3c3k}.

Suppose that $L_0\cap L$ is nontrivial. Then $\mathcal{A}$ is the nontrivial $S=U/(L_0\cap L)$-wreath product. If $|U|\leq 9$, then $|S|\leq 3$. So $\mathcal{A}_{S}$ is $2$-minimal by Lemma~\aref{A-2minsmall} and hence $\mathcal{A}$ is schurian by Lemma~\aref{A-2min}. Let $|U|\geq 27$. Then  $\mathcal{A}$ is schurian by Lemma~\ref{cycl3section} if $U$ is cyclic and by Lemma~\ref{trivradU} otherwise.

Now suppose that $L_0\cap L$ is trivial. Then $L_0$ or $L$ is a noncharacteristic subgroup of $H_1$ of order~$3$. Recall that $\mathcal{A}$ is the $U_1/P_1$-wreath product. If $G/P_1$ is cyclic, then $\mathcal{A}$ is schurian by Lemma~\ref{cycl3section}. So we may assume that $G/P_1$ is noncyclic and hence $L_0$ which is a subgroup of $P_1$ can not be a noncharacteristic subgroup of order~$3$. Therefore $L$ is a noncharacteristic subgroup of order~$3$. Recall that $L\leq \rad(X)$ for every $X\in \mathcal{S}(\mathcal{A})_{H_1\setminus U}$ because $\mathcal{A}_{H_1}$ is the nontrivial $U/L$-wreath product. Since $G/P_1$ is noncyclic, $\overbar{L}\leq \rad(\overbar{X})$ for every $X\in \mathcal{S}(\mathcal{A})_{H_1\setminus U}$, where $\overbar{L}$ and $\overbar{X}$ are the images of $L$ and $X$, respectively, under the canonical epimorphism from $G$ to $G/P_1$. Thus, $\mathcal{A}$ satisfies the conditions of Lemma~\ref{noncharrad} and consequently $\mathcal{A}$ is schurian.

\end{document}